\newtheorem{thm}{Theorem}[section]
\newtheorem{theorem}[thm]{Theorem} 
\newtheorem{notation}[thm]{Notation}
\newtheorem{lemma}[thm]{Lemma}
\newtheorem{cor}[thm]{Corollary}
\newtheorem{prop}[thm]{Proposition}
\newtheorem{con}[thm]{Conjecture}
\newtheorem{remark}[thm]{Remark}
\theoremstyle{definition}
\newtheorem{definition}[thm]{Definition}
\newtheorem{question}[thm]{Question}
\theoremstyle{plain}
\newcommand{\Int}{\mathrm{int}}
\title{Persistently foliar composite knots}
\author{Charles Delman}
\address{Department of Mathematics, Eastern Illinois University, Charleston, IL 61920}
\email{cidelman@eiu.edu}
\author{Rachel Roberts}
\address{Department of Mathematics, Washington University, St.  Louis, MO 63130}
\email{roberts@wustl.edu}
\begin{document}



\begin{abstract}    
A knot $\kappa$ in $S^3$ is \emph{persistently foliar} if, for each non-trivial boundary slope, there is a  co-oriented taut foliation meeting the boundary of the knot complement transversely in a foliation by curves of that slope.  For rational slopes, these foliations may be capped off by disks to obtain a  co-oriented taut foliation in every manifold obtained by non-trivial Dehn surgery on that knot.   We show that any composite knot with a persistently foliar summand is persistently foliar and that any nontrivial connected sum of fibered knots is persistently foliar.   As an application, it follows that any composite knot in which each of two summands is fibered or at least one summand is nontorus alternating or Montesinos is persistently foliar.   

We note that, in constructing foliations in the complements of fibered summands, we build branched surfaces whose complementary regions agree with those of Gabai's product disk decompositions, except for the one containing the boundary of the knot complement.  It is this boundary region which provides for persistence.

\end{abstract}

\maketitle

\section{Introduction}

Co-oriented taut foliations  play an important role in the study of 3-manifolds.   Recently, the search for co-oriented taut foliations in 3-manifolds has been informed by the L-space conjecture    \cite{OzSz3,boyergordonwatson,juhasz3}, which states that an irreducible space that is not an L-space necessarily contains a co-oriented taut foliation.  Considering manifolds obtained by Dehn surgery on $S^3$, a knot $\kappa$ is called an L-space knot if some non-trivial Dehn surgery on $\kappa$ yields an L-space.   A knot $\kappa$ is \emph{persistently foliar} if, for each boundary slope, there is a co-oriented taut  foliation meeting the boundary of the knot complement transversely in a foliation by curves of that slope. For rational slopes, these foliations may be capped off by disks to obtain a co-oriented taut  foliation in every manifold obtained by Dehn surgery on that knot.  In this context, we  propose  the L-space knot conjecture:  
 \begin{con}[L-space knot conjecture]
A knot is persistently foliar if and only if it is not an L-space knot and has no reducible surgeries. 
\end{con}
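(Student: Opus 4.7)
The conjecture splits into two implications of markedly different difficulty, which I would address separately.

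The forward implication (persistently foliar $\Rightarrow$ not an L-space knot and no reducible surgeries) follows from standard facts once one unwinds the definition. If $\kappa$ is persistently foliar and $r$ is a non-trivial rational slope, then capping the boundary-transverse taut foliation by the meridian disks of the Dehn filling solid torus produces a co-oriented taut foliation on $r$-surgery. By the theorem of Ozsv\'ath--Szab\'o that closed 3-manifolds carrying co-oriented taut foliations are not L-spaces, no surgery on $\kappa$ is an L-space, and hence $\kappa$ is not an L-space knot. Similarly, by Novikov's theorem combined with Gabai's result (reducible manifolds other than $S^2\times S^1$ carry no co-oriented taut foliation, and $S^2\times S^1$ arises only as $0$-surgery on the unknot), no non-trivial surgery on $\kappa$ is reducible.

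The backward implication is the genuinely hard direction. The plan would be to construct, for every non-trivial rational slope $r$, a co-oriented taut foliation of the exterior $X = S^3 \setminus \nu(\kappa)$ meeting $\partial X$ transversely in curves of slope $r$. Following the paper's philosophy, I would seek a finite collection of co-oriented laminar branched surfaces in $X$ whose carried foliations realize all non-meridional slopes collectively. Each branched surface should be persistent in the sense that its boundary train track sweeps out an interval of slopes while simultaneously carrying families of co-oriented taut foliations. The essential building blocks would be sutured manifold hierarchies in the sense of Gabai, fiber surfaces for fibered pieces, and the product disk decompositions alluded to in the abstract, modified in a boundary region to gain persistence.

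The principal obstacle is that this direction is essentially a uniform, parametrized strengthening of the L-space conjecture: not only must each non-L-space surgery admit a taut foliation, but these foliations must fit together coherently in $X$ so that the meridian disks of the filling correspond to boundary-transverse curves of the prescribed slope. Even granting the usual (closed-manifold) L-space conjecture, promoting an existence statement in a surgery manifold to a boundary-transverse structure in $X$ is non-trivial, since a taut foliation of the closed manifold need not be transverse to the core of the filling. A plausible avenue is to exploit known structural restrictions on L-space knots (fiberedness and strong quasipositivity by Hedden and Ni, monotonic Alexander polynomials by Ozsv\'ath--Szab\'o) to show that the excluded non-L-space, non-reducible-surgery knots admit sutured hierarchies rich enough to be upgraded, as in the present paper's treatment of fibered summands, from Gabai-style existence of a single taut foliation to persistence across all slopes. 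Carrying this out uniformly appears to require substantial new techniques, which is why the statement remains at the level of conjecture.
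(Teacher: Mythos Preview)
The statement you are addressing is labeled in the paper as a \emph{conjecture}, not a theorem; the paper offers no proof of it and does not claim one. The paper's contribution is to verify the conjecture for certain composite knots by establishing persistence directly (Theorem~\ref{main} and its corollaries), combined with Krcatovich's result that composite knots are never L-space knots. There is therefore no ``paper's own proof'' to compare your proposal against.

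Your write-up is not really a proof attempt but a survey of the landscape, and you acknowledge as much in your final sentence. The forward implication you sketch is essentially correct and standard: a co-oriented taut foliation on a closed orientable $3$-manifold forces irreducibility (Novikov--Rosenberg, with the $S^2\times S^1$ exception) and obstructs being an L-space (Ozsv\'ath--Szab\'o). This direction is folklore and the paper does not bother to spell it out. For the backward implication you correctly identify that it is wide open and would amount to a parametrized, boundary-relative strengthening of the L-space conjecture; your outline of possible ingredients (sutured hierarchies, fiberedness restrictions on L-space knots, persistence via boundary modifications) is reasonable as a research programme but is not a proof, nor does the paper offer one. In short: there is no gap to name because there is no claimed proof on either side; your proposal accurately reflects the conjectural status of the statement.
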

Krcatovich \cite{Krc} has proven that nontrivial connected sums of knots are never L-space knots.  In this paper, we prove that many  composite knots  are also persistently foliar, as detailed in the results described below.   It follows that any such knot $\kappa$ satisfies the L-space knot conjecture, and any 3-manifold obtained by Dehn surgery along $\kappa$ satisfies the L-space conjecture.

Let $\kappa$ be any knot in $S^3$, and fix a regular neighbourhood $N(\kappa)$ of   $\kappa$. Set $X_{\kappa}=S^3\setminus\Int N(\kappa)$. 
Parametrize $\partial N(\kappa)$ as $S^1\times S^1$ so that  $\{1\} \times S^1$ represents the meridian, and $S^1\times \{1\}$ represents the longitude, of $\kappa$. A lamination of $\partial N(\kappa)$ has slope $m\in \mathbb{R}P^1$  if it is isotopic to the image of lines of slope $m$ under the universal covering map $\mathbb R^2\to S^1\times S^1: (s,t) \mapsto (e^{2\pi i s},e^{2\pi i t} )$. The slope $1/0$ is called the \emph{trivial slope}.

More generally, given any oriented 3-manifold $M$ with a single torus boundary component, which we give the standard orientation induced by the orientation of $M$, define the set of \emph{slopes} on $\partial M$ to be the set of isotopy classes of unoriented (simple) curves on $\partial M$.  In the case that $M$ is fibered over $S^1$ with fiber $F$, we distinguish $\partial F$ as the longitude of $\partial M$ and denote it by $\lambda$.  In this context we define a \emph{meridian} to be any curve having a single point of minimal transverse intersection with $\lambda$.  Once a distinguished meridian is chosen (see Section \ref{section: prelim}), each slope may be identified with a point in $\mathbb{R}P^1$.  A different choice of meridian results in a parabolic shift, fixing $0$ (the longitudinal slope), of the associated points of $\mathbb{R}P^1$;  since a parabolic shift preserves the cyclic ordering, we may speak of an interval of slopes (with given endpoints and containing a given third slope in its interior) independently of this choice.

\begin{definition}
A foliation $\mathcal{F}$ \emph{strongly realizes} a slope if $\mathcal{F}$ intersects $\partial N(k)$ transversely in a foliation by curves of that slope.
\end{definition}

\begin{remark}
Note that no co-oriented taut foliation strongly realizes the meridian of a knot in $S^3$, since $S^3$ is simply connected.
\end{remark}

We proceed as follows.  First we show that connected sums behave well with respect to strong realization of slopes:

\noindent \textbf{Proposition~\ref{anyproblem}}.  \emph{Suppose $\kappa=\kappa_1\#\kappa_2$  is a connected sum of knots in $S^3$. If the slope $m$ along $\kappa_1$ is strongly realized, then so is the slope $m$ along $\kappa$.} 


\noindent \textbf{Corollary~\ref{consum}}  \emph{Suppose $\kappa=\kappa_1\#\cdots \#\kappa_n$ is a connected sum of  knots. If at least one of the $\kappa_i$ is persistently foliar, then so is $\kappa$.}

We next show that connected sums of fibered knots are persistently foliar and therefore satisfy the L-space Knot Conjecture:

\noindent \textbf{Theorem~\ref{main}} \emph{ Suppose $\kappa_1$ and $\kappa_2$ are nontrivial fibered knots in $S^3$.
Any nontrivial slope on $\kappa=\kappa_1\# \kappa_2$ is strongly realized by a co-oriented taut foliation  that  has a single   minimal set, disjoint from   $\partial N(\kappa)$.  Hence $\kappa_1\#\kappa_2$  is persistently foliar.
}

Combining the results above with those of \cite{DR2, DR3, DR4}, we obtain:

\noindent \textbf{Corollary~\ref{onesummandenough}}  \emph{Suppose $\kappa=\kappa_1\#\cdots \#\kappa_n$ is a connected sum of  knots. If at least one of the $\kappa_i$ is a nontorus alternating or Montesinos knot or a connected sum of fibered knots, then $\kappa$ is persistently foliar.}

\noindent \textbf{Corollary~\ref{alternatingmontesinos2}}  \emph{Suppose $\kappa$ is a composite knot with a summand that is a nontorus alternating or Montesinos knot or the connected sum of two fibered knots,  and $\widehat{X}_{\kappa}$ is a manifold obtained by non-trivial Dehn surgery along $\kappa$.   Then $\widehat{X}_{\kappa}$ contains a co-oriented taut foliation;  hence, $\kappa$ satisfies the L-space Knot Conjecture.}
 
Since connected sums of fibered knots are necessarily fibered (\cite{Stallings};  for a geometric argument, see \cite{GabMurasugi}),
we can contrast the co-oriented taut foliations constructed in this paper  with  those constructed  in \cite{Rfib1,Rfib2}. First, we combine  some results found in \cite{Rfib2} and restate them using the language of Honda, Kazez and Mati\'c   \cite{HKM1}:

\begin{thm} \cite{Rfib2} \label{oldresult} Suppose $\kappa$ is any nontrivial fibered knot in $S^3$, with monodromy $\phi$.   Exactly  one of the following is true:
\begin{enumerate}
\item  $\phi$ is right-veering,   and  for some $1\le r< \infty$, any slope in $(-\infty,r)$ is strongly realized by a minimal co-oriented taut  foliation. 
\item  $\phi$ is left-veering,  and  for some $1\le r< \infty$, any slope in $(-r, \infty)$ is strongly realized by a  minimal co-oriented taut  foliation.  
\item  Any nontrivial slope is strongly realized by a minimal co-oriented taut foliation.
\end{enumerate}
\end{thm}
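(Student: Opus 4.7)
The plan is to recast the constructions of \cite{Rfib2} in the veering language of \cite{HKM1}; the underlying geometric content is already established in \cite{Rfib2}, and what remains is to identify the dictionary between the combinatorial hypothesis that controls Roberts's branched-surface construction and the HKM notion of right- or left-veering monodromy. First I would recall that for any nontrivial fibered knot $\kappa \subset S^3$ with fiber $F$ and monodromy $\phi$, the paper \cite{Rfib2} constructs two families of minimal co-oriented taut foliations in $X_\kappa$. Each family is obtained from a branched surface built by gluing $F\times[0,1]$ to itself via $\phi$ after attaching branching data determined by a chosen collection of properly embedded essential arcs on $F$. One family strongly realizes the slopes in a half-line of the form $(-\infty,r_+)$, and the other the slopes in a half-line of the form $(-r_-,\infty)$, where each $r_\pm$ lies in $[1,\infty]$; the exact slope realized by each member is controlled by the number of times the branched surface spirals around $\partial N(\kappa)$.

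Second I would establish the dictionary. The upper bound $r_+$ is finite precisely when every admissible essential arc is sent by $\phi$ to the right of itself at its endpoints, which is the HKM definition of $\phi$ being right-veering; symmetrically, $r_-$ is finite precisely when $\phi$ is left-veering. A monodromy cannot be simultaneously right- and left-veering unless it is isotopic to the identity rel boundary, an outcome excluded for the monodromy of a nontrivial fibered knot. Therefore the three cases of the theorem are mutually exclusive and exhaustive: right-veering gives case (1) via the family that extends down to $-\infty$, left-veering gives case (2) by the symmetric family, and if $\phi$ is neither, both families extend without obstruction and their union realizes every nontrivial slope, yielding case (3). Minimality is preserved throughout the construction, so the realized foliations are minimal as required.

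The main obstacle is the dictionary in the middle step. Although the translation is natural, one must trace carefully through the branched-surface construction in \cite{Rfib2} to verify that the obstruction to extending the slope interval past its finite endpoint coincides with the nonexistence of an essential arc twisted in the required direction by $\phi$, and that this combinatorial nonexistence is equivalent to the veering condition as defined in \cite{HKM1}. Once this identification is made, the theorem follows by direct translation of the already-established geometric results of \cite{Rfib2}.
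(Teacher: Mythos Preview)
Your proposal is correct and follows essentially the same approach as the paper: the theorem is stated with a citation to \cite{Rfib2} and no proof is given, the paper merely noting beforehand that it is combining results from \cite{Rfib2} and restating them in the veering language of \cite{HKM1}. Your sketch of how this translation goes---identifying the finiteness of the slope-interval endpoint with the veering condition and invoking mutual exclusivity of right- and left-veering for nontrivial fibered knots---is exactly the content of that restatement.
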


A nontrivial connected sum  of fibered knots   has right-veering (left-veering, respectively) monodromy only if each nontrivial component has right-veering (left-veering, respectively) monodromy. Hence, if $\kappa_1$ and $\kappa_2$ are nontrivial fibered knots in $S^3$  with monodromies that are neither both left-veering nor both right-veering, then the construction of \cite{Rfib2} yields co-oriented taut foliations that strongly realize all nontrivial boundary slopes. In Section~\ref{Remaining}, we focus on this case, and { prove that the methods of this paper yield new constructions of } co-oriented taut foliations.     Recall that a  minimal set is called \textit{genuine} if there is at least one region complementary to the minimal set that is  is not a product \cite{GabaiKazez}.

\textbf{Theorem~\ref{richcase}} \textit{Suppose that $X$ is a fibered 3-manifold, with  fiber $F$ a compact oriented surface with connected boundary, and orientation-preserving monodromy $\phi$.  If there is a tight arc $\alpha$ so that the corresponding  product disk $D(\alpha)$ has transition arcs of opposite sign, then there  is a  co-oriented taut foliation $\mathcal F_r$  that strongly realizes slope $r$ for all slopes except   $\mu$, the distinguished meridian. Furthermore, each $\mathcal F_r$ extends to a co-oriented taut  foliation $\widehat{\mathcal F}_r$ in $\widehat{X}(r)$, the closed 3-manifold obtained by Dehn filling along $r$, and  when  $r$ intersects the meridian efficiently in at least two points, the minimal set of $\widehat{\mathcal F}(r)$  is genuine.}

In contrast, the foliations constructed in \cite{Rfib1,Rfib2} are minimal up to Denjoy blowdown. Hence, when  the  foliations  $\widehat{\mathcal F}_r$   have  genuine  minimal set,  they cannot be isotopic to the  foliations constructed in \cite{Rfib1,Rfib2}.  However, it is possible that these foliations are equivalent under some coarser notion of equivalence.

\begin{question} \label{Q:same} Suppose $\kappa=\kappa_1\#\kappa_2$ is a nontrivial connected sum   that is fibered, and let $M$   be obtained by nontrivial  Dehn surgery along $\kappa$. Let $\mathcal F$, and $\mathcal F'$ be co-oriented taut foliations in $M$, with $\mathcal F$ constructed as in \cite{Rfib2} and $\mathcal F'$ constructed as described in this paper.
\begin{enumerate}
\item Are $\mathcal F$ and $\mathcal F'$ coarsely isotopic    \cite{GabKneser}?
\item Are $\mathcal F$ and $\mathcal F'$ transverse to a common smooth flow?
\item Are $\mathcal F$ and $\mathcal F'$ transverse to nowhere vanishing vector fields $\mathbf{v}$ and $\mathbf{v}'$, respectively, that represent a common $\mbox{Spin}^{\mbox{c}}$-structure $\mathfrak{s}\in \mbox{Spin}^{\mbox{c}}(M)$   \cite{OzSz2}?
\item If yes to (2), do weakly symplectically fillable contact structures  $\xi$, $\xi'$  approximating, respectively, $\mathcal F$ and $\mathcal F'$  have common contact invariant $c(\xi)=c(\xi')\in \widehat{\mbox{HF}}(M,\mathfrak{s})$   \cite{OzSz5,HKM}? 
\end{enumerate}
\end{question}
The construction of co-oriented taut foliations in this paper, as well as in \cite{DR2,DR3, DR4}, involves making choices of  spine and co-orientation on the branches of  a spine  chosen.   
It seems likely that different choices can lead to co-orientable taut foliations that are not isotopic (even up to reversing the co-orientation), and hence (1)--(4) of Question~\ref{Q:same} apply.   

Note that work of Ghiggini  \cite{Ghiggini} and Ni \cite{Ni1,Ni2} (see also \cite{juhasz1, juhasz2})  establishes that  an L-space knot is necessarily fibered.  Hence, conjecturally, any non-fibered knot in $S^3$ is persistently foliar.  Restricting attention to fibered knots permits us to minimize use of the theory of sutured manifolds and thus to emphasize the simplicity of the construction.  In a future paper \cite{DR5}, we discuss more general conditions that allow for the construction of co-oriented taut foliations that strongly realize all boundary slopes except one.  In particular,  we make the following conjecture:

\begin{con}
Every composite knot is persistently foliar.
\end{con}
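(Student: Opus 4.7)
The plan is to reduce, as far as possible, to the results already established in this paper and its companions, and then to identify the remaining case as the genuine obstacle. Given a composite knot $\kappa=\kappa_1\#\kappa_2$ and a nontrivial slope $m$ on $\partial N(\kappa)$, Proposition~\ref{anyproblem} says that it suffices to strongly realize $m$ in $X_{\kappa_1}$ or in $X_{\kappa_2}$, or (as in Theorem~\ref{main}) to build a foliation on $X_\kappa$ directly by gluing partial foliations on $X_{\kappa_1}$ and $X_{\kappa_2}$ that together realize $m$. Thus if at least one summand is persistently foliar, Corollary~\ref{consum} finishes the job; and by Corollary~\ref{onesummandenough} this already covers every composite knot with a nontorus alternating summand, a Montesinos summand, or a summand that is itself a connected sum of fibered knots. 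So we may assume from the outset that every summand $\kappa_i$ is prime, hyperbolic (or at least non-alternating, non-Montesinos), and not already known to be persistently foliar.

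For such summands the natural next step is to exploit whatever partial persistence each $\kappa_i$ does possess. If some $\kappa_i$ is fibered, then by Theorem~\ref{oldresult} it already strongly realizes a half-line of slopes, so by Proposition~\ref{anyproblem} the same half-line is handled for $\kappa$. In particular, if both summands are fibered, Theorem~\ref{main} takes care of the remaining slopes, and we are done. Hence, after these reductions, the hard case is when at least one summand, say $\kappa_2$, is \emph{not} fibered. Since non-fibered knots are not L-space knots (by Ghiggini--Ni), the L-space knot conjecture predicts that $\kappa_2$ is already persistently foliar; but of course we cannot assume this. Instead, the strategy is to mimic the construction of Theorem~\ref{main} in a sutured setting: replace the fibered monodromy and its tight arcs with a sutured manifold hierarchy of a minimal genus Seifert surface $F_2$ for $\kappa_2$, and build a branched surface $B$ in $X_\kappa$ whose complementary regions agree, except at $\partial N(\kappa)$, with the product (or more general) sutured pieces of the hierarchy, while leaving enough freedom along the boundary region to realize arbitrary slopes. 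This is the framework announced in \cite{DR5}, and it is built on the same spine-plus-co-orientation technology used in \cite{DR2,DR3,DR4}.

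The main obstacle, then, is the following: for a prime non-fibered summand $\kappa_2$, produce a taut branched surface carrying a co-oriented taut foliation on $X_{\kappa_2}\setminus(\text{product region at }\partial)$ that (i) is compatible, across the decomposing sphere, with a similarly constructed object on $X_{\kappa_1}$, and (ii) leaves the boundary region flexible enough to sweep out all nontrivial slopes. For a general hierarchy, the non-product decomposing disks introduce branch loci whose monodromy must be analyzed, and one must verify tautness of the resulting laminations as well as the absence of sphere or torus leaves after Dehn filling; this is substantially more delicate than the fibered case handled in Theorem~\ref{main}, where the product disk decomposition supplies all the needed structure for free. A secondary obstacle, which may ultimately force a separate argument, is the case of a summand admitting a reducible or cabling-type surgery: here persistence at certain slopes must be established by an ad hoc construction rather than by branched surface splitting. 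Once these pieces are in place, combining them with Corollary~\ref{consum} and Theorem~\ref{main} should yield persistence for every composite knot.
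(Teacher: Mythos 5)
The statement you were asked to prove is stated in the paper as a \emph{conjecture}, not a theorem; the paper offers no proof of it, and you correctly treat it as open. Your write-up is therefore not a proof but a reduction-and-roadmap, and as such it is accurate: you correctly observe that Proposition~\ref{anyproblem} and Corollary~\ref{consum} dispose of any composite knot with a persistently foliar summand, that Corollary~\ref{onesummandenough} covers nontorus alternating, Montesinos, and fibered-connected-sum summands, that Theorem~\ref{main} handles the case where all summands are fibered, and that the genuine remaining obstacle is a prime, non-fibered summand falling outside the classes treated in \cite{DR2,DR3,DR4}. This matches what the paper itself says: after proving Theorem~\ref{main}, the authors explicitly defer the non-fibered case to the forthcoming \cite{DR5} and state the conjecture rather than a theorem. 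Your proposed strategy --- replacing the monodromy/product-disk data by a sutured manifold hierarchy on a minimal genus Seifert surface, while keeping the boundary complementary region flexible --- is the same strategy the introduction attributes to \cite{DR5}, so you have identified both the boundary of what is proved and the announced route forward.

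Two small cautions. First, your remark that a fibered summand with right-veering (or left-veering) monodromy supplies a half-line of slopes via Theorem~\ref{oldresult} and Proposition~\ref{anyproblem} is correct, but this does not combine with anything on the other summand to close the gap unless the other summand is also fibered (then Theorem~\ref{main} applies) or is itself persistently foliar (then Corollary~\ref{consum} applies); a half-line from one side plus nothing from the other leaves half the slopes uncovered. Second, your ``secondary obstacle'' about a summand admitting a reducible or cabling surgery is not actually an obstruction in this setting: a reducible filling of $X_{\kappa_i}$ is not the same manifold as the corresponding filling of $X_\kappa$, and Proposition~\ref{anyproblem} only requires a taut foliation on $X_{\kappa_i}$ realizing the slope, not on the filled manifold, so reducibility of some surgery on a \emph{summand} does not block the gluing argument. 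In short, you have not proved the conjecture, but neither does the paper, and your account of which cases are settled and where the real difficulty lies is essentially the one the paper gives.
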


 All constructions of co-oriented taut foliations found in this paper are adaptions  of the  \emph{pure arrow} type construction found in \cite{DR2}.  Among the constructions of persistent families of co-oriented taut foliations found in \cite{DR2, DR3,DR4,DR5, DR6}, these are the ones closest to the sutured manifold constructions introduced by Gabai.

  \section{Acknowledgements}
 
 The work for this paper began at Casa Matem\'atica Oaxaca (CMO), where the second author attended the workshop  \textit{Thirty Years of Floer Theory for 3-Manifolds}.
 We thank Michel Boileau for calling our attention to the specialness of connected sums, and Casa Matem\'atica Oaxaca (CMO) for their hospitality.
 
This work was partially supported by National Science Foundation Grant DMS-1612475.
 
 \section{Preliminary Definitions} \label{section: prelim}

\subsection{Fibered knots  and product disks}  

A knot $\kappa$ in $S^3$ is \emph{fibered}   if   the knot complement  $X_{\kappa}$ is homeomorphic to $$F\times [0,1]/\sim,$$ where $(x,1)\sim (\phi(x),0)$ for some compact orientable surface $F$ and homeomorphism $\phi:F\to F$. In this case, $F$ is called a \emph{fiber} of $\kappa$, and $\phi$ the \emph{monodromy map} of the fibering.  The homeomorphism type of $F\times [0,1]/\sim$ is dependent only on the isotopy class of $\phi$.  

We will always assume that a fibered knot $\kappa$ and its fiber $F$  are consistently oriented; namely, $F$  is oriented and $\kappa$ is isotopic to $\lambda = \partial F$ as an oriented manifold.    We also assume an orientation of $S^3$, with the induced normal orientation on $F$ equal to the increasing orientation on the $[0,1]$ factor of $F \times [0,1]$. 

We next  state some results in the context of fibered 3-manifolds, rather than restricting attention to knot complements. 
Let $X$ denote the fibered 3-manifold  $$F\times [0,1]/\sim,$$ where $(x,1)\sim (\phi(x),0)$, for some compact orientable surface $F$ and orientation preserving homeomorphism $\phi:F\to F$.  We restrict attention to the case that $F$ has a single boundary component.
As described in \cite{Rfib2} and \cite{KR}, there is a canonical choice of meridian $\mu$, and hence a canonical parametrization of $\partial X$ as $S^1\times S^1$ so that $S^1\times \{1\}$ is isotopic to $\lambda$, and $\{1\} \times S^1$ is isotopic to $\mu$. When $X=X_{\kappa}$ is a knot complement in $S^3$, this canonical choice agrees with the standard one, although this is not necessarily the case for a knot complement in a general 3-manifold. For completeness, we give below a purely topological description of this distinguished meridian $\mu$.  

Two properly embedded arcs intersect \emph{efficiently} (\emph{efficiently rel endpoints}) if any intersections are transverse and no isotopy through properly embedded arcs (rel endpoints) reduces the number of points of intersection. A pair of properly embedded arcs $(\alpha,\beta)$ is \emph{tight} if either $\alpha=\beta$ (as unoriented arcs) or if $\alpha$ and $\beta$ are non-isotopic and intersect efficiently. Given a properly embedded arc $\alpha$, we may isotope $\phi$ so that $(\alpha,\phi(\alpha))$ is a tight pair; in this case, we say that $\alpha$ is \emph{tight (with respect to $\phi$)}. Given a tight pair $(\alpha,\beta)$, it is clear that $(\phi(\alpha), \phi(\beta))$ is also a tight pair;  furthermore, we may isotope $\phi$ so that the arcs $\alpha,\beta,\phi(\alpha)$, and $\phi(\beta)$ are pairwise tight.  Indeed,   any finite collection of properly embedded oriented arcs can be isotoped to be pairwise tight, and $\phi$ then isotoped so that the collection of arcs together with their images under $\phi$ are pairwise tight. When working with a finite collection of arcs, we will henceforth assume that these arcs and then $\phi$ have been isotoped in this way.

\begin{notation} \label{defntransitionarc1}  
Given any    properly embedded arc $\alpha \in F$, with endpoints $\alpha(0)$ and $\alpha(1)$ in $\partial F$, let   $D(\alpha)$  be the image of $\alpha \times [0,1]$, and let $\delta_i(\alpha)$  be the image of  $\alpha(i) \times [0,1]$, for $i = 0,1$, under the quotient map $F \times [0,1] \rightarrow F \times [0,1]/\sim$.  Identify $\alpha$ with the image of $\alpha \times \{0\}$ and $\phi(\alpha)$ with the image of $\alpha \times \{1\}$;  thus,   $\partial D(\alpha) = \alpha \cup \phi(\alpha) \cup \delta_0(\alpha) \cup \delta_1(\alpha)$.    
\end{notation}

Now consider an oriented properly embedded tight essential arc $\alpha$. If $\alpha=\phi(\alpha)$ (as oriented arcs), set $\mu=\delta_0(\alpha)$. Suppose that $\alpha\ne\phi(\alpha)$ (as oriented arcs). The endpoints of $\delta_0(\alpha)$ cut $\lambda$ into two open arcs, $\rho_1(\alpha)$ and $\rho_2(\alpha)$ say. The simple closed curves $\mu_1(\alpha)=\delta_0(\alpha)\cup\rho_1(\alpha)$ and $\mu_2(\alpha)=\delta_0(\alpha)\cup\rho_2(\alpha)$ are meridians satisfying $|\langle \mu_1(\alpha),\mu_2(\alpha)\rangle|=1$.

We observe the following, which will be useful in the definition of $\mu$ given below. If $\partial M$ is parametrized as $S^1\times S^1$ so that  $\{1\}\times S^1$ represents the meridian $\mu_i(\alpha)$ and $S^1\times \{1\}$ represents $\lambda$, then, up to isotopy of this parametrization, $\delta_0(\alpha)$ maps to $\rho_i(\alpha)$ under the projection onto the second factor $S^1\times S^1\to S^1: (s,t)\mapsto t$.  Letting $\widehat{X}_i$ denote the 3-manifold obtained by Dehn filling along $\mu_i(\alpha)$, $i=1,2$, it follows that $\phi(\alpha)$ is to the left of $\alpha$ in the associated open book of $\widehat{X}_i$ if and only if $\phi(\alpha)$ is to the right of $\alpha$ in the associated open book of $\widehat{X}_j$, for $j\ne i$.

Next consider a   properly embedded  oriented essential arc   $\beta$ such that the arcs $\alpha,\beta,\phi(\alpha)$, and $\phi(\beta)$ are pairwise nonisotopic as unoriented arcs and pairwise tight. Orient the arc  $\beta$. Up to symmetry, including interchanging the labelings $\rho_1$ and $\rho_2$, there are three possibilities:
\begin{enumerate}

\item $\{\mu_1(\alpha),\mu_2(\alpha)\}\cap \{\mu_1(\beta),\mu_2(\beta)\}=\{\mu_1(\alpha)\}=\{\mu_1(\beta)\}$;

\item $\rho_1(\alpha)\cap \rho_1(\beta)=\emptyset$ but $\{\mu_1(\alpha),\mu_2(\alpha)\} = \{\mu_1(\beta),\mu_2(\beta)\}$;

\item $\rho_i(\alpha)\cap \rho_j(\beta)\ne\emptyset$ for all $i, j = 1, 2$ (and, hence, $\{\mu_1(\alpha),\mu_2(\alpha)\} = \{\mu_1(\beta),\mu_2(\beta)\}$).

\end{enumerate}
These are illustrated in Figure~\ref{fig: arcpairs}.

\begin{figure}[ht]
\labellist
\small
\pinlabel $\rho_1(\alpha)$ at 160 408
\pinlabel $\rho_1(\beta)$ at 234 408
\pinlabel $\rho_1(\alpha)$ at 363 408
\pinlabel $\rho_1(\beta)$ at 442 408
\pinlabel { (1)} at 180 284
\pinlabel { (2)} at 395 284
\pinlabel { (3)} at 600 284
\pinlabel $\lambda$ at 266 350
\pinlabel $\lambda$ at 478 350
\pinlabel $\lambda$ at 678 350
\endlabellist
\begin{center}
\includegraphics[scale=.45]{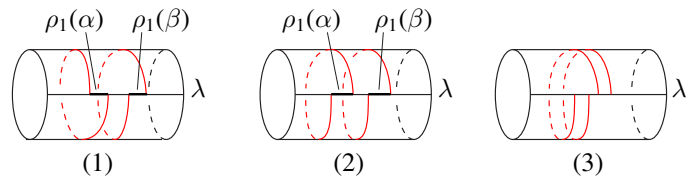}
\end{center}
\caption{Possible relative positions of the transition arcs $\delta_0(\alpha)$ and $\delta_0(\beta)$.}
\label{fig: arcpairs}
\end{figure}

If (1) holds for some choice of $(\alpha,\beta)$, let $\mu=\mu_1(\alpha)$ be the common meridian, and set $\delta'_0(\alpha)= \rho_1(\alpha)$.  In this case, $\phi$ realizes $\mu$ as a closed orbit and $\mu\in \{\mu_1(\gamma),\mu_2(\gamma)\}$ for all choices of $\gamma$. If (2) holds for two choices $(\alpha,\beta)$ and $(\alpha', \beta')$ such that $\mu_1(\alpha)=\mu_2(\alpha')$, so $\mu_1(\alpha) \neq \mu_1(\alpha')$,  we note in passing that $\phi|_{\partial F}$ has a periodic point of order two. If (3) holds for all $(\alpha,\beta)$, we note that $\phi$ is isotopic to a periodic homeomorphism of order two.  (If not, consider an arc $\alpha$ such that $\phi^{-1}(\alpha)$ is not isotopic to $\phi(\alpha)$ and an arc $\beta$ with $\beta(0)$ in the component of $\partial F \setminus \{\phi^{-1}(\alpha(0)), \phi(\alpha(0))\}$ that does not contain $\alpha(0)$.  One of $(\alpha, \beta)$ or $(\phi(\alpha),\beta)$ fails to satisfy (3).)   In these cases, we defer the choice of $\mu$ until after Corollary \ref{correct meridian}.  Otherwise, set $\mu=\mu_1(\alpha)$, as well.

\begin{thm}\label{oldrachel2} \cite{Rfib2}        If (1) holds for some choice of $(\alpha,\beta)$, there are co-oriented taut foliations that strongly realize all slopes   except possibly $\mu$. Otherwise, there are co-oriented taut foliations that strongly realize all slopes in the interval of slopes containing $\lambda$ that lie strictly between $\mu_1(\alpha)$ and $\mu_2(\alpha)$ for some, and hence all, choices of tight $\alpha$.
\end{thm}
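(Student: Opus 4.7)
The plan is to realize each prescribed slope $r$ by constructing a co-oriented taut foliation $\mathcal{F}_r$ carried by a branched surface built from the fibration together with the product disks arising from the tight arc $\alpha$ (and, when (2) or (3) holds, also $\beta$). This follows the scheme of \cite{Rfib2}.

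First I would form the initial branched surface $B_0 = F \cup D(\alpha)$, or $B_0 = F \cup D(\alpha) \cup D(\beta)$ when the case at hand requires $\beta$, smoothed along the fiber arcs $\alpha, \phi(\alpha)$ (and $\beta, \phi(\beta)$ if present) with consistent co-orientations. Because the pair $(\alpha,\phi(\alpha))$ is tight, the complement of $B_0$ in $X$ decomposes into $I$-bundles over pieces of $F \setminus (\alpha \cup \phi(\alpha))$, except for the single region $R$ adjacent to $\partial X$. Standard branched-surface arguments then produce an essential lamination carried by $B_0$, and the product structure extends through the $I$-bundle regions to give a partial foliation; the issue is completing it across $R$ with transverse trace of slope $r$ on $\partial X$.

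Which slopes are realizable is dictated by how the transition arcs $\delta_0(\alpha), \delta_1(\alpha)$ (and $\delta_0(\beta), \delta_1(\beta)$ in cases (2), (3)) sit on $\partial X$ relative to $\lambda$: the foliation of $R \cap \partial X$ by slope-$r$ curves must be transverse to each of these transition arcs. In cases (2) and (3), the transition arcs $\delta_0(\alpha)$ and $\delta_0(\beta)$ lie on opposite sides of each of the meridians $\mu_1(\alpha)$ and $\mu_2(\alpha)$, so the realizable interval is the open cyclic arc between $\mu_1(\alpha)$ and $\mu_2(\alpha)$ that contains $\lambda$; invariance of this interval under the choice of tight $\alpha$ follows from the observation that the cyclic ordering of $\delta_0(\alpha), \lambda, \delta_1(\alpha)$ on $\partial X$ is determined by the case alone. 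In case (1), by contrast, the hypothesis forces $\delta_0(\alpha)$ and $\delta_0(\beta)$ to lie on the same side of the common meridian $\mu = \mu_1(\alpha) = \mu_1(\beta)$, and $\phi$ realizes $\mu$ as a closed orbit of its restriction to $\partial F$; this gives enough room to extend $\mathcal{F}_r$ with boundary slope any point of $\mathbb{R}P^1 \setminus \{\mu\}$. Tautness is checked by exhibiting, for every leaf, a closed transversal built from short arcs along the chosen co-orientation of the product disks and arcs transverse to the fibers.

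The main obstacle is the extension step across $R$: for each $r$ in the putative interval one must produce a foliation of $R$ that is transverse to $\partial X$ with slope $r$, matches the partial foliation along the rest of $\partial R$, and introduces no Reeb components or half-Reeb annuli against $\partial X$. This is precisely where the case analysis (1)--(3) enters, since the combinatorics of how the transition arcs sit relative to $\lambda$ control both the available slopes and the ability to avoid Reeb-type pathology.
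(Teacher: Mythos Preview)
The paper does not prove this theorem; it is cited from \cite{Rfib2}. However, the paper's Lemma~\ref{BGabai} and Theorem~\ref{Gcanbemadelaminar} expose the actual mechanism, and your sketch departs from it in ways that matter.

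Your picture of the complementary regions is off. By Lemma~\ref{BGabai}, the complement of $B^G(\alpha)=\langle F;D(\alpha)\rangle$ is a \emph{single} sutured product $(F|_\alpha\times I,\partial(F|_\alpha)\times I)$, not ``$I$-bundles over pieces of $F\setminus(\alpha\cup\phi(\alpha))$ together with a non-product region $R$ near $\partial X$.'' The branched surface is properly embedded and meets $\partial X$ in a train track; there is no separate boundary region to fill in. Consequently the realization mechanism is not ``extend a partial foliation across $R$ with prescribed boundary slope,'' but rather: split $B^G(\alpha)$ along an $\alpha$-sparse (and end-effective) sequence $\phi(\alpha)=\alpha_0,\alpha_1,\dots,\alpha_n=\alpha$ so that the resulting branched surface is laminar, and then observe which slopes are fully carried by the boundary train track $\tau=B^G(\alpha_1,\dots,\alpha_n)\cap\partial X$. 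This splitting step is essential---$B^G(\alpha)$ itself typically has sink disks---and it is entirely absent from your outline; ``standard branched-surface arguments'' do not produce a lamination without it.

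The case analysis also misfires. In the ``otherwise'' clause the construction uses only $\alpha$; the arc $\beta$ plays no role beyond the dichotomy in the statement, and adding $D(\beta)$ to the spine is neither needed nor what \cite{Rfib2} does. The interval of realized slopes is read off directly from $\tau$: when the two transition arcs of $\alpha$ have the same sign, $\tau$ fully carries the open interval bounded by the two extremal meridians $\mu$ and $\pm 1$ (see the paragraph following Theorem~\ref{Gcanbemadelaminar}); when they have opposite sign, $\tau$ fully carries every meridian except $\mu$. Case~(1) is handled by finding an arc whose transition arcs have opposite sign (or by combining the intervals coming from arcs with opposite-sign behaviour), not by the ``same side of $\mu$'' reasoning you give.
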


If $X$ is the complement of a fibered knot $\kappa \subset S^3$, and $\alpha$ is a tight essential arc in $F$, then in terms of the standard parametrization of $\partial N(\kappa)$, $\mu_1(\alpha)=1/n$ and $\mu_2(\alpha)=1/(n+1)$, for some $n\in \mathbb Z$.  If $n\notin \{-1,0\}$, then by Theorem \ref{oldrachel2}, $1/0$ would be strongly realized by a taut foliation, an impossibility;  hence, $1/0 \in \{\mu_1(\alpha), \mu_2(\alpha)\}$.  In  fact, the proof of Theorem 4.5 of \cite{KR} implies more:

\begin{cor} \label{correct meridian} If $X$ is the complement of a fibered knot $\kappa \subset S^3$, one of $\mu_1(\alpha)$ or $\mu_2(\alpha)$ is the standard meridian for some, and hence any, choice of tight $\alpha$; moreover, $\mu$ as defined as above is the standard meridian.
\end{cor}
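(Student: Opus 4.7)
The plan is to combine the remark preceding the corollary with a closer examination of the labeling conventions in the definition of $\mu$, appealing to the proof of Theorem 4.5 of \cite{KR} as the author indicates.

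The first sentence is essentially already established in the paragraph above the corollary. For any tight essential arc $\alpha$ in $F$, we have $\{\mu_1(\alpha),\mu_2(\alpha)\}=\{1/n,1/(n+1)\}$ in the standard parametrization of $\partial N(\kappa)$; if $n\notin\{-1,0\}$, Theorem \ref{oldrachel2} would produce a co-oriented taut foliation strongly realizing the slope $1/0$, contradicting the remark following the definition of strong realization. Thus $1/0\in\{\mu_1(\alpha),\mu_2(\alpha)\}$, and since only the tightness of $\alpha$ was used, this holds for every such arc, yielding the ``hence any'' clause.

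For the ``moreover'' assertion in case (1), I argue directly: $\mu=\mu_1(\alpha)=\mu_1(\beta)$ is the unique element of $\{\mu_1(\alpha),\mu_2(\alpha)\}\cap\{\mu_1(\beta),\mu_2(\beta)\}$ (this is the defining property of case (1)). Both pairs must contain $1/0$ by the first part of the corollary, so $1/0$ lies in their intersection, forcing $\mu=1/0$.

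In the remaining ``otherwise'' case, where $\mu=\mu_1(\alpha)$ for a chosen tight $\alpha$, I would verify that the labeling convention used here on the two arcs $\rho_1(\alpha),\rho_2(\alpha)$ places the standard meridian in the $\mu_1$ position. This is precisely the additional information the author extracts from the proof of Theorem 4.5 of \cite{KR}: that proof tracks the relative position of $\phi(\alpha)$ with respect to $\alpha$ in the open books associated with the two candidate meridional Dehn fillings and identifies which one matches the standard open book of $S^3$. The main obstacle is exactly this bookkeeping step, namely ensuring that the $\rho_1/\rho_2$ labeling convention adopted here coincides with the one in \cite{KR}, so that $\mu_1(\alpha)$ is the standard meridian rather than its competitor $\mu_2(\alpha)$. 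Once this is in hand, the deferred cases can be handled after the corollary by taking $\mu$ to be whichever of $\mu_1(\alpha),\mu_2(\alpha)$ equals $1/0$, as guaranteed by the first part of the corollary.
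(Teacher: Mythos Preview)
Your proposal is correct and aligns with the paper's treatment: the paper gives no separate proof of the corollary, instead deriving the first assertion in the paragraph preceding it (exactly as you reproduce) and attributing the ``moreover'' clause to the proof of Theorem~4.5 of \cite{KR}. Your handling of the ``otherwise'' case is thus identical to the paper's.

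Your direct argument for case~(1) is a small but genuine addition. The paper simply folds this case into the blanket citation of \cite{KR}, whereas you observe that since both two-element sets $\{\mu_1(\alpha),\mu_2(\alpha)\}$ and $\{\mu_1(\beta),\mu_2(\beta)\}$ contain $1/0$ (by the first part) and their intersection is the singleton $\{\mu_1(\alpha)\}$ (by the defining property of case~(1)), that singleton must be $\{1/0\}$. This is elementary and self-contained, avoiding any appeal to \cite{KR} for this case. The paper's approach has the virtue of uniformity; yours isolates the case where no outside input is needed.
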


Finally, in the cases for which $\mu$ has not been defined above, we define it as follows:  if $X$ is the complement of a fibered knot in $S^3$, choose $\mu$ to be the standard meridian;  otherwise, with reference to the observation above, choose $\mu \in \{\mu_1(\alpha), \mu_2(\alpha)\}$ so that $\phi(\alpha)$ is to the right of $\alpha$ in the $\mu$-Dehn  filling of $F\times [0,1]/\sim$.  Set $\delta'_1(\alpha)=\delta'_0(-\alpha)$. With these definitions we have $\mu=\delta_i(\alpha)\cup\delta'_i(\alpha)$ for all tight $\alpha$ and for $i=1,2$.

\begin{notation} \label{notation: meridian}
Parting with previous notation, henceforth let $\mu_i(\alpha) =\delta_i(\alpha)\cup\delta'_i(\alpha)$, $i = 0,1$. 
\end{notation}

\begin{definition}
Each  $\delta_i(\alpha)$ is a \emph{transition arc}, and  each $\delta'_i(\alpha)$ is the \emph{meridian complement} of   $\delta_i(\alpha)$. 
\end{definition}

In summary, parametrize  $\partial M$  as $S^1\times S^1$ so that  $\{1\}\times S^1$ represents the distinguished meridian $\mu$, and $S^1\times \{1\}$ represents the longitude $\lambda$, the isotopy class of $\partial F$. 
If we consider   $D(\alpha)$, for some tight $\alpha$, and focus on a neighourhood of  one of $\delta_0(\alpha)$ or $\delta_1(\alpha)$,  then either $\alpha=\phi(\alpha)$ as oriented arcs, or we see one of the models shown in Figure~\ref{RVLV}. Reversing the orientation convention found in \cite{Rfib1,Rfib2},  we call the first transition arc \emph{positive}  and the second \emph{negative}.  Notice that reversing the orientation of $\kappa$ reverses the orientation on $F$, and vice versa; so,  the sign of a transition arc is independent of the initial choice of orientation on $\kappa$.  Either the endpoints of $\alpha$ separate those of $\phi(\alpha)$ or they do not; up to symmetry, the possibilities when $\alpha$ is not isotopic to $\phi(\alpha)$ (as unoriented arcs) are listed in Figure~\ref{endpointspossibilities}.   

\begin{remark}
We observe that the open book decomposition in $\widehat{X}(\mu)$ associated to $(F, \phi)$ is \emph{right-veering} (respectively, \emph{left-veering}) \cite{HKM1} if,    for every properly embedded tight oriented arc $\alpha \in F$, either $\alpha=\phi(\alpha)$ or the transition arc $\delta_0(\alpha)$ is positive (respectively, negative).
\end{remark}

\begin{figure}[ht]
\labellist
\small
\pinlabel $\kappa$ at 84 433
\pinlabel $\kappa$ at 430 433
\pinlabel \textcolor{Red}{\footnotesize{$\mu$}} at 300 447
\pinlabel \textcolor{Red}{\footnotesize{$\mu$}} at 643 447
\pinlabel \textcolor{Red}{\footnotesize{$\phi(\alpha)$}} at 208 290
\pinlabel \textcolor{Red}{\footnotesize{$\alpha$}} at 278 290
\pinlabel \textcolor{Red}{\footnotesize{$\phi(\alpha)$}} at 580 290
\pinlabel \textcolor{Red}{\footnotesize{$\alpha$}} at 510 290
\pinlabel $+$ at 154 370
\pinlabel $+$ at 500 370
\pinlabel {Positive transition} [Bl] at 111 220
\pinlabel {Negative transition} [Bl] at 451 220
\endlabellist
\begin{center}
\includegraphics[scale = .4]{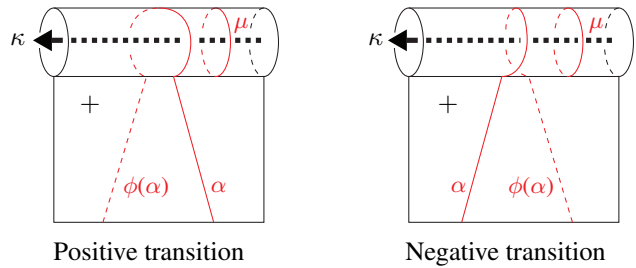}
\end{center}
\caption{Positive and  negative  transition models.}\label{RVLV}
\end{figure} 

\begin{figure}[ht]
\labellist
\small
\pinlabel $+$ at 141 480
\pinlabel $+$ at 450 480
\pinlabel $+$ at 697 480
\pinlabel $+$ at 385 190
\pinlabel a at 385 340
\pinlabel b at 385 50
\endlabellist
\begin{center}
\includegraphics[scale=.4]{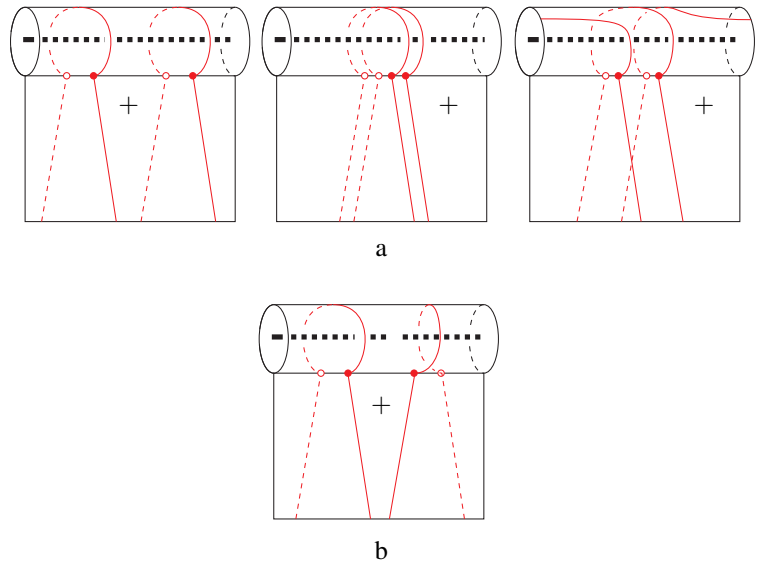}
\end{center}
\caption{Possibilities for the transition arc pair:  (a) positive transitions (b) positive and negative transition.}\label{endpointspossibilities}
\end{figure} 

\begin{definition}
Given any metric space $X$ and any subset $A \subset X$, the \emph{closed complement} of $A$ in $X$,  denoted $X|_A$,  is the metric completion of $X\setminus A$.
\end{definition}

\begin{remark}
Intuitively, $X|_A$ amounts to cutting $X$ open along $A$.  (Although other authors have used the notation $X_A$, we feel the inclusion of a vertical slash evokes the notion of ``cutting.")   In particular, we will consider the closed complement of a curve in a surface and of a surface or, more generally, a lamination \cite{GO}, in a $3$-manifold, with respect to the path metric inherited from a Riemannian metric.  For example, if $F$ is a fiber of a fibered knot $\kappa$, then $M|_F$ is homeomorphic to $F\times [0,1]$. \end{remark}

 \begin{definition} \label{tightproddisk}
Let $M$ be a 3-manifold with nonempty boundary, and let $S$ be an oriented surface with nonempty boundary properly embedded in $M$.  Label the two copies of $S$ in $(M|_S)$ by $S_-$ and $S_+$.  A \emph{product} disk is  an immersed disk in $M$ whose pre-image under the quotient map $M|_S \to M$ is properly embedded in $(M|_S)$ with boundary consisting   of two essential arcs in $\partial M$, and two essential arcs in $S$, one contained in $S_-$ and one in $S_+$.   A product disk is \emph{tight} if the two arcs of its boundary in $S$ are tight.   
\end{definition}

In particular, the disk $D(\alpha)$ of Notation \ref{defntransitionarc1} is a tight product disk whenever $\alpha$ is tight.

\subsection{Laminations and foliations}

Roughly speaking, a  codimension-one foliation $\mathcal F$ of a 3-manifold $M$ is a disjoint union of surfaces injectively immersed in $M$ such that $(M,\mathcal F)$ looks locally like $(\mathbb R^3,\mathbb R^2 \times \mathbb  R)$. More precisely, we have the following definition.

\begin{definition}
Let $M$ be a closed $C^{\infty}$ 3-manifold. 
A \textit{codimension one foliation} $\mathcal F$ of (or in) $M$ is   a decomposition of $M$ into  disjoint  connected  surfaces $L_i$, called the \textit{leaves of $\mathcal F$},   such that  $(M,\mathcal F)$ looks locally like $(\mathbb R^3,\mathbb R^2 \times \mathbb R)$. More precisely:
\begin{enumerate}
\item $\cup_i L_i = M$, and
\item  there exists an atlas $\mathcal A$ on $M$  with respect to which $\mathcal F$  respects  the following local product structure: 
\begin{itemize}
\item for every $p\in M$, there exists a coordinate chart $(U,(x,y,z))$ in $\mathcal A$ about $p$ such that $U$ is homeomorphic to $\mathbb R^3$ and the restriction of $\mathcal F$ to $U$ is the union of
planes given by $z = $ constant. 
\end{itemize}
\end{enumerate}
A foliation is \emph{co-oriented} if the leaves admit co-orientations that are locally compatible.  

We also consider foliations $\mathcal F$ in compact smooth 3-manifolds with nonempty boundary, restricting attention to the case that $\partial M$ is a nonempty union of tori and $\mathcal F$ intersects $\partial M$ everywhere transversely. In this case, at boundary points of $M$, 
$(M,\mathcal F)$ looks locally like horizontal closed half planes $[0,\infty)\times\mathbb R$.
\end{definition}
Calegari \cite{calegari} proved that any   foliation has an isotopy representative that is $C^{\infty,0}$; in particular,  such that $T\mathcal F$ is defined and continuous,  and leaves of $\mathcal F$ are smoothly immersed.  A foliation is  \textit{taut} \cite{Gab,KRtaut} if for every $p\in M$ there exists a 1-submanifold that contains $p$ and is  everywhere transverse to $\mathcal F$.  The foliations constructed in this paper will have only noncompact leaves; hence they  have an isotopy representative that is taut \cite{Haefliger,KRtaut}. 

Recall that a subset of $M$  is \emph{$\mathcal F$-saturated} if it is a union of leaves of $\mathcal F$. A \emph{minimal set} of $\mathcal F$ is a closed $\mathcal F$-saturated subset of $M$ that doesn't properly contain a nonempty closed $\mathcal F$-saturated subset. The foliations constructed in this paper  contain a unique minimal set, and this minimal set is disjoint from $N(\kappa)$.

A \emph{lamination} $\mathcal L$ is a  decomposition of a closed subset of $M$ into a union of injectively immersed surfaces, called the \emph{leaves} of $\mathcal L$,  such that  $(M,\mathcal L)$ looks locally like $(\mathbb R^3,\mathbb R^2 \times C)$, where $C$ is a closed subset of $\mathbb R$.   Properly embedded compact surfaces, foliations, and $\mathcal F$-saturated closed subsets of $M$, such as minimal sets of foliations, are all key examples of laminations.  All laminations that arise in this paper are $\mathcal F$-saturated closed subsets of $M$ for some foliation $\mathcal{F}$. 

A lamination \emph{strongly realizes}     the slope $r\in S^1$ if it meets $\partial N(\kappa)$   transversely in a lamination consisting of  consistently oriented curves of slope $r$. When $r$ is rational,  these curves are closed, and it makes sense to talk about the manifold $\widehat{M}_r$ obtained by Dehn surgery along $\kappa$ by slope $r$. If a lamination $\mathcal L$ strongly realizes slope $r$, then it extends to a lamination $\widehat{\mathcal L}$ in $\widehat{M}_r$ by capping off each boundary curve  with disk. Moreover, if $\mathcal L$ is a co-oriented taut foliation, then so is $\widehat{\mathcal L}$.

\section{Any  composite knot with a persistently foliar summand is persistently foliar. } \label{lifeeasy}

Before discussing connected sums of fibered knots, we prove the useful fact that strong realization of a slope for a knot in $S^3$ extends to any connected sum with that knot.

\begin{prop} \label{anyproblem}
Suppose $\kappa=\kappa_1\#\kappa_2$  is a connected sum of knots in $S^3$. If the slope $m$ along $\kappa_1$ is strongly realized by a co-oriented taut foliation, then so is the slope $m$ along $\kappa$. 
\end{prop}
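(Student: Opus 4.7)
The plan is to exploit the swallow-follow decomposition $X_\kappa = Y_1 \cup_A Y_2$ coming from the sphere $S$ that realizes the connect sum, where $A = S \cap X_\kappa$ is an essential annulus whose two boundary circles are meridians of $\kappa$, and each $Y_i$ is the complement in one of the balls of the knotted arc $\kappa \cap B_i$. A key topological fact is that $Y_1$ is homeomorphic to $X_{\kappa_1} \setminus V_1$, where $V_1 \subset X_{\kappa_1}$ is a thin solid torus — concretely, a product collar $A_{1,V} \times I$ of a meridional annulus $A_{1,V} \subset \partial N(\kappa_1)$ pushed slightly into the interior, so that $\partial V_1 = A_{1,V} \cup A$.

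First I would isotope $\mathcal{F}_1$ so that, on a collar neighborhood $\partial N(\kappa_1) \times [0,\epsilon]$, it is the product foliation with leaves of the form $\ell \times [0,\epsilon]$ for slope-$m$ curves $\ell$. Choosing $V_1$ inside this collar, $\mathcal{F}_1$ becomes transverse to $\partial V_1$ and restricts to a product foliation on $V_1$. Then $\mathcal{F}_1|_{Y_1}$, transferred via the identification $Y_1 \cong X_{\kappa_1} \setminus V_1$, is a co-oriented taut foliation on $Y_1$ transverse to $\partial Y_1 = A_{1,Y} \cup A$, with slope-$m$ foliations on both annuli that agree along the two meridional boundary circles.

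The main step, and the key obstacle, is to construct a compatible foliation $\mathcal{F}_2$ on $Y_2$ that is transverse to $\partial Y_2 = A_{2,Y} \cup A$, matches $\mathcal{F}_1|_{Y_1}$ along $A$, and realizes slope $m$ on $A_{2,Y}$. I would first extend the prescribed boundary foliation (slope-$m$ curves on the torus $\partial Y_2$) inward via a product collar, and then extend across $Y_2$ using an essential spanning surface — for instance, the intersection with $B_2$ of a Seifert surface of $\kappa_2$ — as a branched surface carrying the foliation. Because $Y_2$ is irreducible and $\partial$-irreducible (which follow from $\kappa_2$ being nontrivial), this extension can be chosen Reeb-less.

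Finally, gluing $\mathcal{F}_1|_{Y_1}$ and $\mathcal{F}_2$ along $A$ yields a co-oriented foliation $\mathcal{F}$ on $X_\kappa$ transverse to $\partial N(\kappa)$ in slope-$m$ curves. Tautness then follows from Novikov's theorem, since neither $\mathcal{F}_1|_{Y_1}$ nor $\mathcal{F}_2$ contains a Reeb component and the co-orientations match across $A$. The hardest part will be verifying that for every slope $m$ realized by $\mathcal{F}_1$ the associated boundary data on $\partial Y_2$ admits a Reeb-less extension into $Y_2$; this is where the specific product structure of $\mathcal{F}_1$ near $A$ and the essential spanning surface inside $Y_2$ must be combined carefully.
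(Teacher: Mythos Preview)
Your decomposition $X_\kappa = Y_1 \cup_A Y_2$ with $Y_i \cong X_{\kappa_i}$ is exactly the one the paper uses, and the plan to glue a given foliation on $Y_1$ to some foliation on $Y_2$ along $A$ is correct. The gap is in what you ask of the foliation on $Y_2$.

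You prescribe ``slope-$m$ curves on the torus $\partial Y_2$''. This is the wrong boundary data, and it makes the problem both harder and in general impossible. First, it is harder than necessary: if $\kappa_2$ is an L-space knot (e.g.\ a torus knot) there are infinitely many slopes $m$ for which no taut foliation on $X_{\kappa_2}$ strongly realizes $m$, so your proposed extension across $Y_2$ cannot exist for those $m$, no matter what branched-surface machinery you invoke. Second, even when such a foliation exists, gluing slope $m$ on $\partial X_{\kappa_1}$ to slope $m$ on $\partial X_{\kappa_2}$ does not produce slope $m$ on $\partial X_\kappa$; the twists add, so you would realize slope $2m$ on $\kappa$. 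The correct requirement is the \emph{longitudinal} slope on $\partial Y_2$: the arcs of a longitude of $\kappa_2$ in the annulus $A_2$ are, by construction of the connect sum, exactly the arcs $\lambda\cap A_2$ of the longitude of $\kappa$, so they contribute no extra twisting and the glued boundary foliation has slope $m$.

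Once you see this, the ``key obstacle'' evaporates: by Gabai's theorem (the main result of \cite{Gab3}), every nontrivial knot complement $X_{\kappa_2}$ carries a co-oriented taut foliation meeting the boundary in longitudes. That is the entire content of the paper's proof --- cite Gabai for $\mathcal F_2$, match the product arc foliations on $A$, and glue. Your proposed ad-hoc construction (product collar, spanning surface, branched surface, ``Reeb-less because irreducible and $\partial$-irreducible'') is both unnecessary and unjustified as stated; irreducibility alone certainly does not guarantee a Reeb-less extension with prescribed boundary slope.
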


\begin{proof} Suppose  $X_{\kappa_1}$   contains a co-oriented taut foliation $\mathcal F_1$ that strongly realizes slope $m$. By    \cite{Gab3}, there is a co-oriented taut foliation $\mathcal F_2$ in $X_{\kappa_2}$ that strongly realizes the longitudinal boundary slope.  We  describe how to form a ``connected sum" of these two foliations  to produce a co-oriented taut foliation that strongly realizes slope $m$ in $X_{\kappa}$.

 Let $P$ denote a summing sphere for this connected sum, and set $A := P\cap (S^3\setminus \Int N(\kappa)) = P \cap X_\kappa.$   Observe that   for each $i = 1, 2$,  $\partial X_{\kappa_i}$ is the union of two annuli, one of which is $A$.     Viewing $A$ as $S^1 \times I$, we may arrange that   each foliation $\mathcal{F}_i$ intersects $A$ with leaves $\theta \times I$, $\theta \in S^1$. It is easy to check that, gluing $\mathcal F_1$ to $\mathcal F_2$ along $A$, we obtain a taut foliation $\mathcal{F}$ in $X_\kappa$ that realizes slope $m$.   Choosing compatible co-orientations on the foliations $\mathcal F_1$ and $\mathcal F_2$ yields a co-orientation for $\mathcal{F}$.    
\end{proof}

\begin{cor}\label{consum} Suppose $\kappa=\kappa_1\#\cdots \#\kappa_n$ is a connected sum of  knots. If at least one of the $\kappa_i$ is persistently foliar, then so is $\kappa$.\qed 
\end{cor}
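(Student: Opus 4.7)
The plan is to derive this almost immediately from Proposition~\ref{anyproblem}, using associativity of connected sum to reduce to the case of two summands. Without loss of generality (by reindexing), I may assume the persistently foliar summand is $\kappa_1$. Write $\kappa = \kappa_1 \# \kappa'$, where $\kappa' = \kappa_2 \# \cdots \# \kappa_n$, so that $\kappa$ is literally expressed as a connected sum of two knots, one of which is persistently foliar.

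Now fix an arbitrary non-trivial boundary slope $m$ on $\partial N(\kappa)$. Using the standard meridian-longitude parametrizations, slopes on $\partial N(\kappa)$ correspond canonically to slopes on $\partial N(\kappa_1)$, so $m$ may be regarded as a slope on $\kappa_1$ as well. Since $\kappa_1$ is persistently foliar, there is a co-oriented taut foliation $\mathcal F_1$ on $X_{\kappa_1}$ that strongly realizes slope $m$. Proposition~\ref{anyproblem} then produces a co-oriented taut foliation on $X_\kappa$ strongly realizing slope $m$. As $m$ was an arbitrary non-trivial slope, this shows $\kappa$ is persistently foliar.

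The only point that requires any care is verifying that ``non-trivial slope'' on $\kappa$ matches up with ``non-trivial slope'' on $\kappa_1$ under the identification of boundary tori induced by the summing sphere; but the meridian of $\kappa_1$ in $\partial N(\kappa_1)$ corresponds precisely to the meridian of $\kappa$ in $\partial N(\kappa)$, and the longitudes likewise correspond, so this identification is the obvious one and preserves triviality of slopes. There is no real obstacle here — all the geometric content has already been absorbed into Proposition~\ref{anyproblem} (which in turn relies on Gabai's construction of a taut foliation realizing the longitudinal slope for any knot complement). One could equivalently phrase the argument by induction on $n$, with the base case $n=1$ trivial and the inductive step an immediate application of Proposition~\ref{anyproblem} to $\kappa_1$ and $\kappa_2 \# \cdots \# \kappa_n$, but the grouping above makes even the induction unnecessary.
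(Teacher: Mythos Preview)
Your argument is correct and is exactly what the paper intends: the corollary is marked with a \qed\ and no proof, as it follows immediately from Proposition~\ref{anyproblem} by grouping the summands as $\kappa_1 \# (\kappa_2 \# \cdots \# \kappa_n)$. Your remarks about the correspondence of meridians and longitudes are valid but more than the authors felt necessary to spell out.
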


\section{Spines, Train Tracks and Branched surfaces} \label{toodles}

 In this paper we construct foliations by first constructing a spine, which we then smooth to a branched surface that carries a foliation (more precisely, a lamination that extends to a foliation).  We restrict attention to the case that any intersections of a  spine or a branched surface with $\partial M$ are transverse; hence the intersection of a branched surface with the boundary of a 3-manifold is a train track.  The curves carried by this train track play an important role in our analysis.
 
A \emph{train track} is a space locally modeled on one of the spaces of Figure~\ref{traintrackmodels}. An \emph{$I$-fibered neighbourhood} of a train track $\tau$ is a regular neighborhood $N(\tau)$ foliated (as a 2-manifold with corners) by interval fibers that intersect $\tau$ transversely, as locally modeled by the spaces in Figure \ref{traintracknbhd}. 
 
\begin{figure}[ht]
\labellist
\small
\pinlabel {1-manifold} [B] at 195 135
\pinlabel {neighborhood} [B] at 195 95
\pinlabel {double point} [B] at 570 135
\pinlabel {neighborhood} [B] at 570 95
\endlabellist
\begin{center}
\includegraphics[scale=.3]{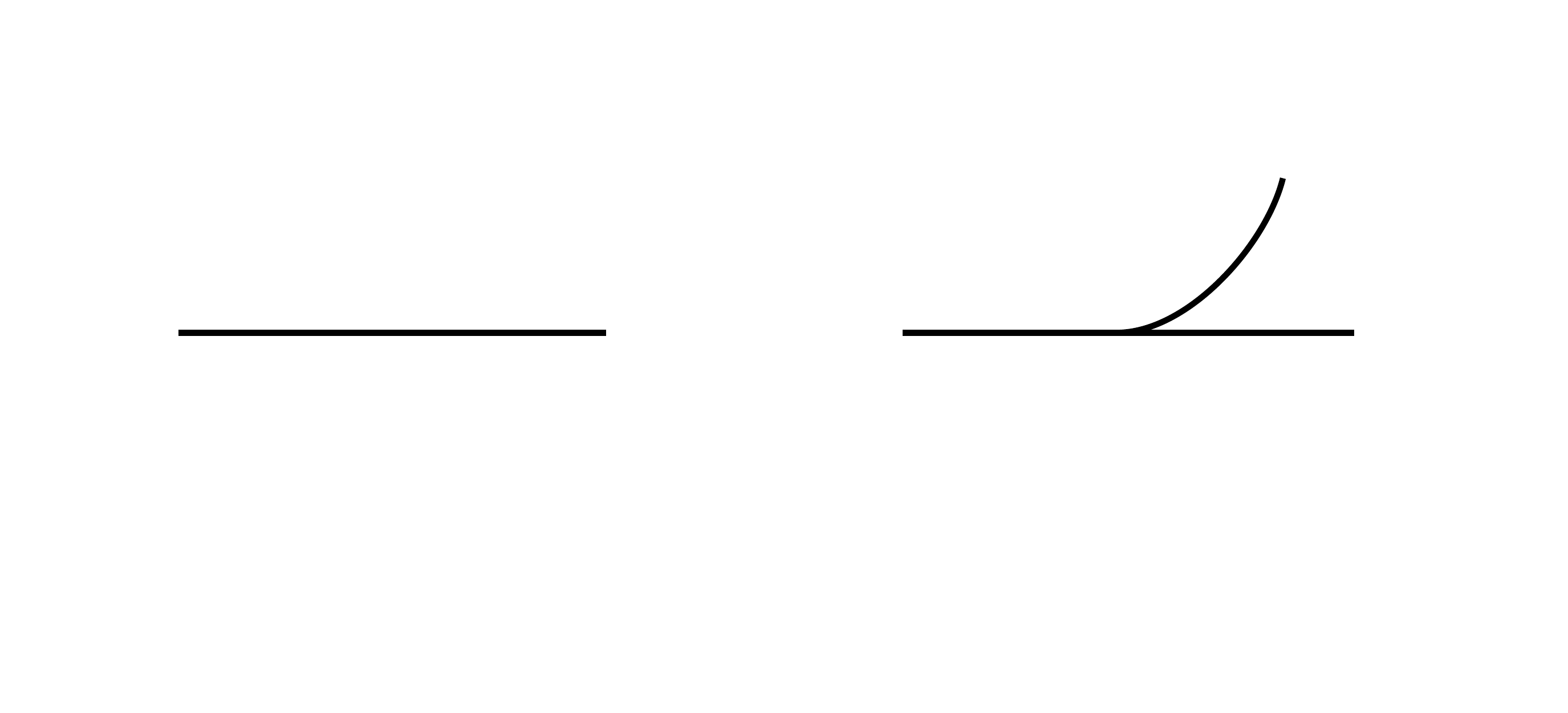}
\end{center}
\caption{Local models of a train track.}\label{traintrackmodels}
\end{figure} 

\begin{figure}[ht]
\labellist
\small
\pinlabel {1-manifold} [B] at 170 100
\pinlabel {neighborhood} [B] at 170 70
\pinlabel {double point} [B] at 552 100
\pinlabel {neighborhood} [B] at 552 70
\endlabellist
\begin{center}
\includegraphics[scale=.4]{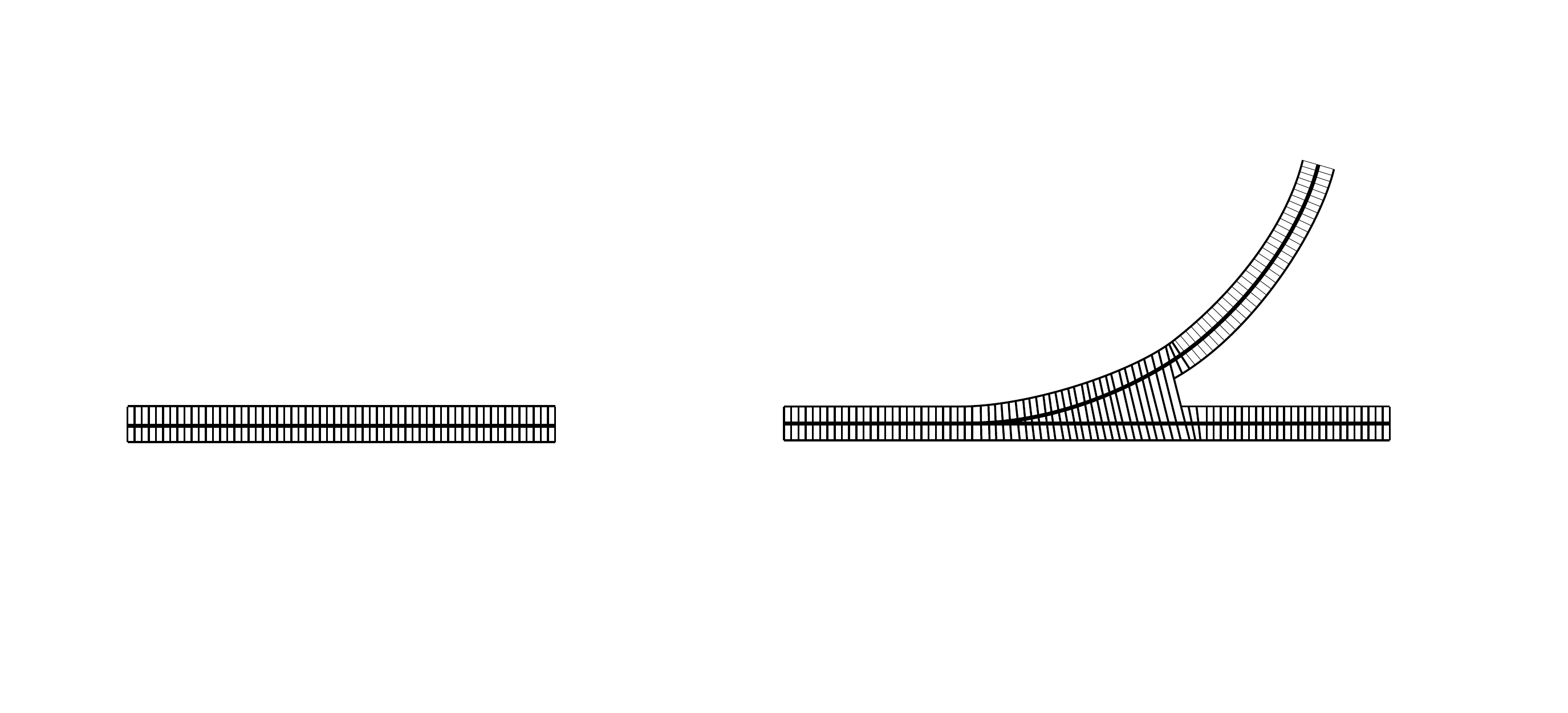}
\end{center}
\caption{Local models of an $I$-fibered neighbourhood of a train track.}\label{traintracknbhd}
\end{figure} 

A {\it standard spine} \cite{C} is a space $\Sigma$ locally modeled on one of the spaces of Figure~\ref{spine}.   A standard spine with boundary has the additional local models shown in Figure~\ref{spineboundary}. The {\it critical locus}  $\Gamma$  of $\Sigma$ is the 1-complex of points of $\Sigma$ where the spine is not locally a manifold.  The critical locus is a stratified space (graph) consisting of triple points $\Gamma^0$ and arcs of double points $\Gamma^1 = \Gamma \setminus \Gamma^0$.

\begin{definition}
 The components of $\Sigma|_{\Gamma}$ are called the {\it sectors} of $\Sigma$. 
\end{definition}

\begin{figure}[ht]
\labellist
\small
\pinlabel surface [B] at 170 200
\pinlabel neighborhood [B] at 170 170
\pinlabel {double point} [B] at 395 200
\pinlabel neighborhood [B] at 395 170
\pinlabel {triple point} [B] at 615 200
\pinlabel neighborhood [B] at 615 170
\endlabellist
\begin{center}
\includegraphics[scale = .4]{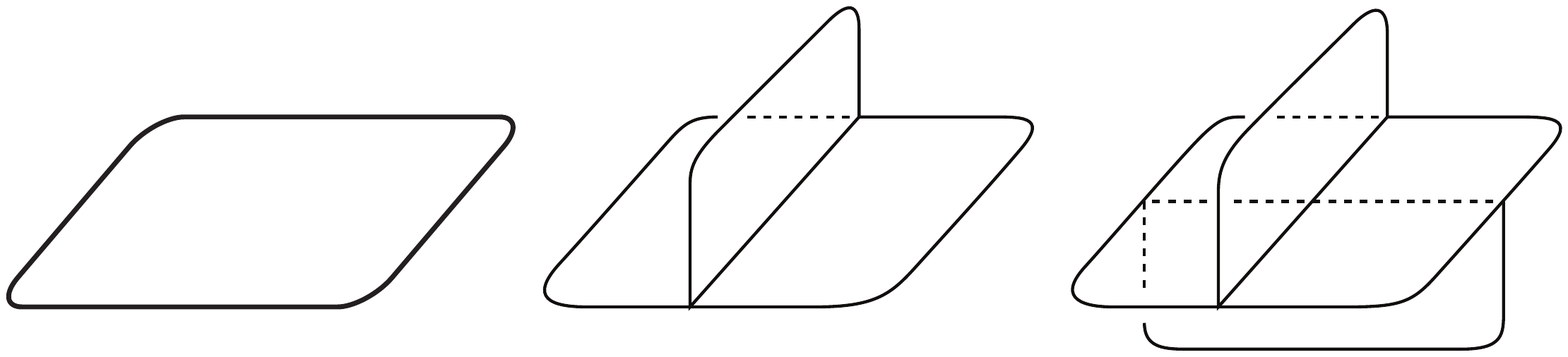}
\end{center}
\caption{Local models of a standard spine at interior points.}\label{spine}
\end{figure} 

\begin{figure}[ht]
\labellist
\small
\pinlabel surface [B] at 235 215
\pinlabel neighborhood [B] at 235 185
\pinlabel {double point} [B] at 535 215
\pinlabel neighborhood [B] at 535 185
\pinlabel {at boundary} [B] at 235 155
\pinlabel {at boundary} [B] at 535 155
\pinlabel {$\partial \Sigma$} at 375 210
\endlabellist
\begin{center}
\includegraphics[scale=.4]{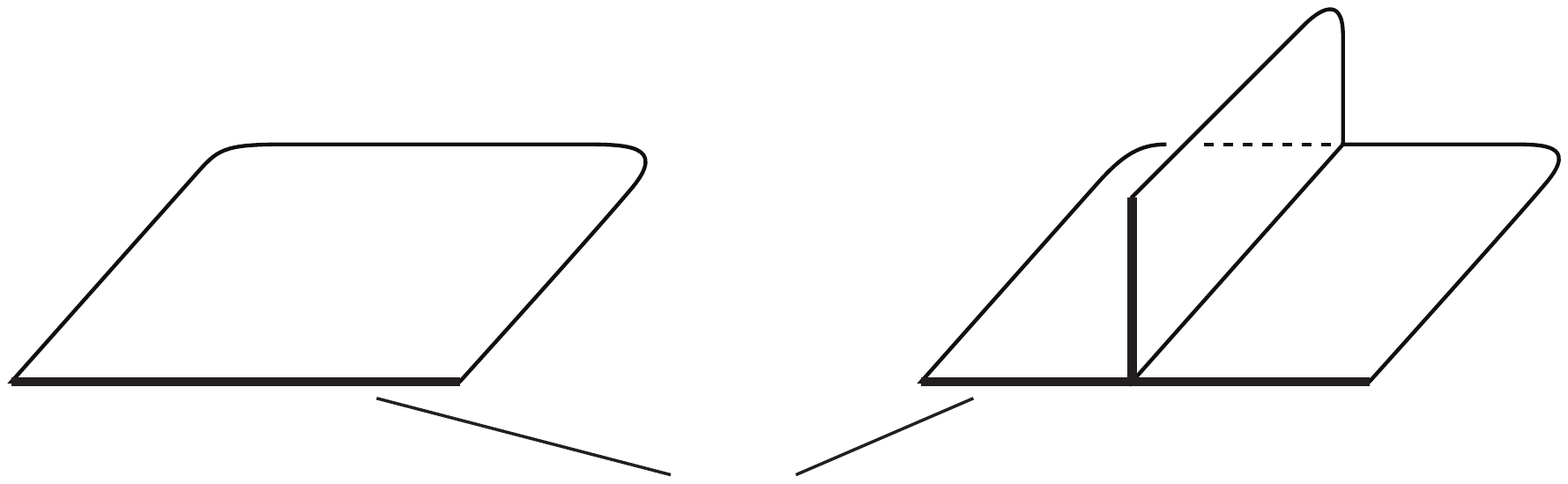}
\end{center}
\caption{Local models of a standard spine at boundary points.}\label{spineboundary}
\end{figure} 

A {\it branched surface (with boundary)} (\cite{W}; see also \cite{O1,O2}) is a space $B$ locally modeled on the spaces of Figure~\ref{brsurf} (along with those in Figure~\ref{brsurfboundary}); that is, $B$ is homeomorphic to a spine, with the additional structure of a well-defined tangent plane at each point. The {\it branching locus} $\Gamma$ of $B$ is the 1-complex of points of $B$ where $B$ is not locally a manifold; such points are called \emph{branching points}.  The branching locus is a stratified space (graph) consisting of triple points $\Gamma^0$ and arcs of double points $\Gamma^1 = \Gamma \setminus \Gamma^0$. The components of $B|_\Gamma$ are called the {\it sectors} of $B$.  

\begin{figure}[ht]
\labellist
\small
\pinlabel surface [B] at 170 210
\pinlabel neighborhood [B] at 170 180
\pinlabel {double point} [B] at 390 210
\pinlabel neighborhood [B] at 390 180
\pinlabel {triple point} [B] at 615 210
\pinlabel neighborhood [B] at 615 180
\endlabellist
\begin{center}
\includegraphics[scale=.4]{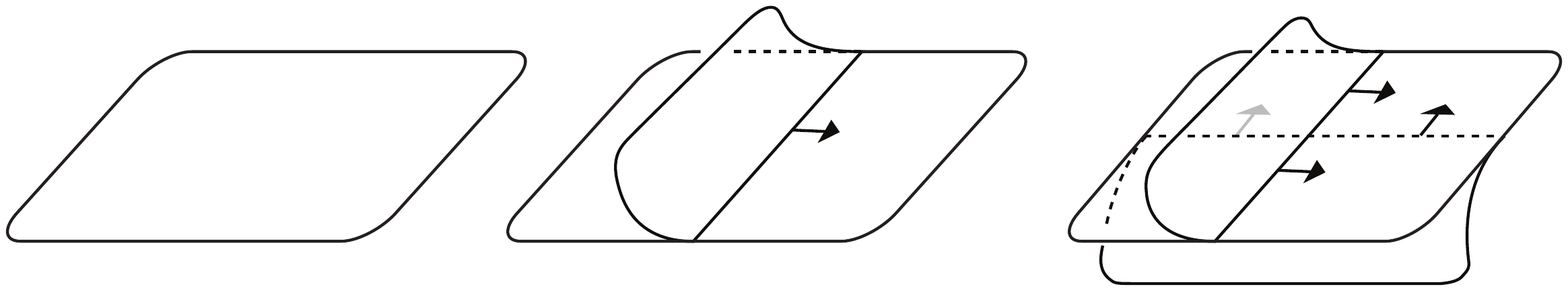}
\end{center}
\caption{Local model of a branched surface at interior points.}\label{brsurf}
\end{figure}

\begin{figure}[ht]
\labellist
\small
\pinlabel surface [B] at 215 225
\pinlabel neighborhood [B] at 215 195
\pinlabel {double point} [B] at 535 225
\pinlabel neighborhood [B] at 535 195
\pinlabel {at boundary} [B] at 215 165
\pinlabel {at boundary} [B] at 535 165
\pinlabel {$\partial \Sigma$} at 355 220
\endlabellist
\begin{center}
\includegraphics[scale=.4]{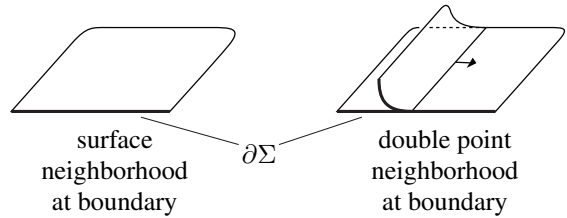}
\end{center}
\caption{Local model of a branched surface at boundary points.}\label{brsurfboundary}
\end{figure}

An \emph{$I$-fibered neighborhood} of a branched surface $B$  in a $3$-manifold $M$ is a regular neighborhood $N(B)$ foliated by interval fibers that intersect $B$ transversely, as locally modeled by the spaces in Figure \ref{figure: I-bundle neighborhood} at interior points;  if the ambient manifold $M$ has non-empty boundary, all spines and branched surfaces are assumed to be properly embedded, with $N(B)\cap\partial M$ a union (possibly empty) of I-fibers. The surface $\partial N(B) \setminus \partial M$ is a union of two subsurfaces,  $\partial_v N(B)$ and  $\partial_h N(B)$, where $\partial_v N(B)$, the \emph{vertical boundary},  is a union of sub-arcs of I-fibers, and $\partial_h N(B)$,  the \emph{horizontal boundary}, is everywhere transverse to the I-fibers.  

\begin{figure}[ht]
\labellist
\small
\pinlabel {Horizontal Boundary} [Bl] at 270 345
\pinlabel {Vertical boundary} [Br] at 270 310
\pinlabel surface [B] at 120 105
\pinlabel neighborhood [B] at 120 75
\pinlabel {double point} [B] at 390 105
\pinlabel neighborhood [B] at 390 75
\pinlabel {triple point} [B] at 635 105
\pinlabel neighborhood [B] at 635 75
\endlabellist
\begin{center}
\includegraphics[scale=.4]{ibundleneighborhood}
\end{center}
\caption{Local models for $N(B)$ (at interior points).}  \label{figure: I-bundle neighborhood}
\end{figure}

Let $\pi$ be the retraction of $N(B)$ onto the quotient space obtained by collapsing each fiber to a point.  The branched surface $B$ is obtained, modulo a small isotopy, as the image of $N(B)$ under this retraction.  We will freely identify $B$ with this image \cite{O1} and the core of each component of vertical boundary with its image in $\Gamma$.   Double points of the branching locus are \emph{cusps} with \emph{cusp direction} pointing inward from the vertical boundary if $B$ is viewed as the quotient of $N(B)$ obtained by collapsing the vertical fibers to points.  Cusp directions will be indicated by arrows, as in Figures \ref{brsurf}  and \ref{brsurfboundary}.

A branched surface $B$ is \emph{co-oriented} if the one-dimensional foliation of $N(B)$ is oriented. When $\partial M$ is a union of tori, and  co-oriented $B$ is transverse to $\partial M$, the regions $$(\partial M\setminus\Int N(B),\partial_v N(B)\cap \partial M)$$ are products, and we use the notation $$\partial'_v N(B)=\partial_v N(B)\cup  (\partial M\setminus\Int N(B)).$$ Following Gabai \cite{Gab, Gab2, Gab3}, we call each component of $\partial'_v N(B)$ a \emph{suture} and refer to the pair $\left(M \setminus \Int N(B), \partial'_v N(B)\right)$ as a \emph{sutured manifold}.  Note that each component  of $\partial'_v(N(B))$ is an annulus or a torus.  

A surface is said to be \emph{carried by} $B$ if it is contained in $N(B)$ and is everywhere transverse to the one-dimensional foliation of $N(B)$. A surface is said to be \emph{fully carried by} $B$ if it carried by $B$ and has nonempty intersection with every I-fiber of $N(B)$. A lamination $\mathcal L$ is  carried by $B$ if each leaf of $\mathcal L$ is carried by $B$, and  fully carried if, in addition, each I-fiber of $N(B)$ has nonempty intersection with some leaf of $\mathcal L$. 

Similarly,   a  1-manifold, or union of 1-manifolds, is said to be \emph{carried by}  a train track $\tau$ if it is contained in some I-fibered neighbourhood $N(\tau)$, everywhere transverse to the one-dimensional foliation of $N(\tau)$. A 1-manifold, or union of 1-manifolds, is said to be \emph{fully carried by} $\tau$ if it  is carried by $\tau$ and has nonempty intersection with every I-fiber of $N(\tau)$.

\begin{figure}[ht]
\labellist
\small
\pinlabel {\tiny $\pm$} at 243 360
\pinlabel {\tiny $\pm$} at 295 327
\pinlabel {\tiny $\pm$} at 285 270
\endlabellist
\begin{center}
\includegraphics[scale=.5]{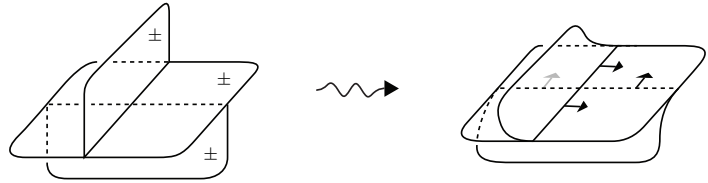}
\end{center}
\caption{Oriented spine to oriented branched surface.}\label{spinetobrsurf} 
\end{figure}

 If a branched surface $B$ is homeomorphic to a spine $\Sigma$, we say that $B$ is obtained by \emph{smoothing} $\Sigma$.     An example is    illustrated in 
Figure~\ref{spinetobrsurf}. We say that a  choice of   co-orientations on the sectors of $\Sigma$ is \emph{compatible} if there is a smoothing of $\Sigma$ to a co-oriented branched surface $B$ that preserves the co-orientations on sectors; in this case, we call this  smoothing the \emph{smoothing determined by the co-orientations}. Examples are    illustrated in 
Figure~\ref{smoothingexample} and Figure~\ref{smoothingexample2}. Branched surfaces  $B^G(\alpha)=\langle F;D(\alpha) \rangle$ as described   below  play a key role in this paper. We use the superscript $G$  because Lemma~\ref{BGabai} describes a special case of Gabai's Construction~4.16 of \cite{Gab3} applied in the context of \cite{Gabfib}.

\begin{notation}
Let $X$ be a fibered 3-manifold, with compact fiber $F$ and monodromy $\phi$;  assume $\partial F$ is connected and non-empty.  (For current purposes, of course, we focus on the complement $X_\kappa$ of a fibered knot $\kappa$.)  Let $\alpha$ be a tight arc in $F$ such that $\alpha\ne\phi(\alpha)$ (as unoriented arcs).  Set  $\Sigma=F\cup D(\alpha)$.   Any choice of orientations on the surfaces $F$ and   $D(\alpha)$  determines a unique compatible smoothing of $\Sigma$ to a branched surface. We denote this branched surface by   $B^G(\alpha) =\langle F;D(\alpha)\rangle$.   (For purposes of this notation, $F$ and  $D(\alpha)$ are assumed oriented.) 
\end{notation}

\begin{figure}[ht]
\labellist
\small
\pinlabel $\pm$ at 100 534
\pinlabel $\pm$ at 230 479
\pinlabel $\pm$ at 100 424
\pinlabel $\pm$ at 100 298
\pinlabel $\mp$ at 230 243
\pinlabel $\pm$ at 100 188
\pinlabel {\scriptsize $N(B) \cup \partial M$, showing} [Bl] at 562 110
\pinlabel {\scriptsize the sutures, $\partial'_vN(B)$} [Bl] at 562 85
\endlabellist
\begin{center}
\includegraphics[scale=.4]{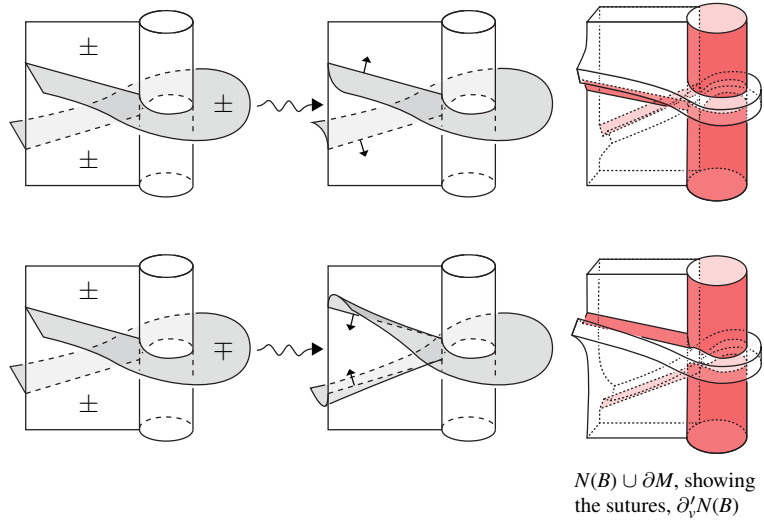}
\end{center}
\caption{$B^G$   near a   transition arc.}\label{smoothingexample} 
\end{figure}

\begin{figure}[ht]
\labellist
\small
\pinlabel $\pm$ at 116 415
\pinlabel $\mp$ at 276 358
\pinlabel $\pm$ at 276 277
\endlabellist
\begin{center}
\includegraphics[scale=.3]{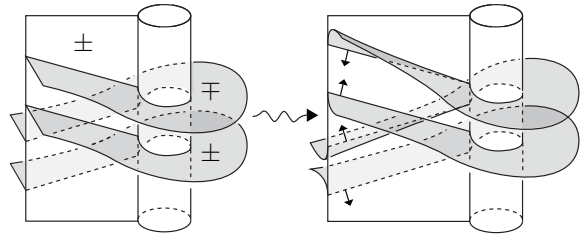}
\end{center}
\caption{$B^G$ near $\partial M$.}\label{smoothingexample2} 
\end{figure}

\begin{lemma}  \label{BGabai}
The complement pair $\left( X\setminus \Int N\left(B^G(\alpha)\right),\partial'_v N\left(B^G(\alpha)\right)\right)$ is homeomorphic to $\left(F|_\alpha \times I,\partial \left(F|_\alpha\right) \times I\right)$.

\end{lemma}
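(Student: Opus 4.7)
The strategy is to cut $X$ open along the underlying spine $\Sigma = F\cup D(\alpha)$ of $B^G(\alpha)$ in two stages, using the fibration to recognize each cut as a product sutured decomposition, and then to match the resulting boundary structure with the sutures of $N(B^G(\alpha))$.

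First, I would observe that, as a point set in $X$, the branched surface $B^G(\alpha)$ coincides with $\Sigma$; smoothing only alters the tangent plane distribution. Hence an $I$-fibered neighborhood $N(B^G(\alpha))$ may be chosen to coincide with a regular neighborhood $N(\Sigma)$, so that $X\setminus \Int N(B^G(\alpha)) \cong X|_\Sigma$. It therefore suffices to compute $X|_\Sigma$ together with the decomposition of its boundary into horizontal and vertical pieces.

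Next, I would cut $X$ open in stages. Cutting along the fiber $F$ presents $X|_F \cong F \times I$, a product sutured manifold with suture $\partial F \times I$. The product disk $D(\alpha)$ is by definition the image of $\alpha \times I$ under the quotient map $F \times I \to X$, so in $X|_F$ it lifts to the properly embedded rectangle $\alpha \times I$; cutting $F \times I$ along this rectangle yields
\[
X|_\Sigma \;\cong\; (F \times I)\big|_{\alpha \times I} \;\cong\; F|_\alpha \times I.
\]
Since this second cut is along a vertical rectangle in a product sutured manifold, the suture of the resulting sutured manifold is $\partial F|_\alpha \times I$.

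Finally, I would verify that this suture matches $\partial'_v N(B^G(\alpha))$ under the homeomorphism of the first step. The boundary $\partial F|_\alpha$ decomposes as two copies $\alpha^{+},\alpha^{-}$ of $\alpha$ produced by the cut, together with the two subarcs $\rho_1(\alpha), \rho_2(\alpha)$ of $\partial F \setminus \{\alpha(0), \alpha(1)\}$. The rectangles $\alpha^\pm \times I$ should appear as the vertical boundary collars of $N(B^G(\alpha))$ along the two interior branching arcs $\alpha,\phi(\alpha) \subset F$, while the rectangles $\rho_i(\alpha) \times I$, together with the transition arcs $\delta_0(\alpha), \delta_1(\alpha)$, should assemble along $\partial M$ to constitute $\partial M \setminus \Int N(B^G(\alpha))$. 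I expect this final matching to be the main obstacle: it requires careful bookkeeping near the four triple points of the critical locus, where interior and boundary branching arcs meet, and I would handle it as a local check against the smoothing models shown in Figures~\ref{smoothingexample} and~\ref{smoothingexample2}.
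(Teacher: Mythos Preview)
Your proposal is correct and follows essentially the same two-stage decomposition as the paper: cut $X$ along $F$ to obtain $F\times I$ with suture $\partial F\times I$, then cut along the product disk $D(\alpha)=\alpha\times I$ to obtain $F|_\alpha\times I$ with suture $\partial(F|_\alpha)\times I$, the latter step introducing the two new vertical strips with cores parallel to $\alpha$. One small terminological slip: the four points you worry about, where the interior branching arcs $\alpha$ and $\phi(\alpha)$ meet $\partial M$, are boundary double points (as in Figure~\ref{brsurfboundary}) rather than triple points, since $B^G(\alpha)=F\cup D(\alpha)$ has no triple points at all; the local check you describe there is exactly what the paper handles by appeal to Figure~\ref{decomprod}.
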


\begin{proof}  The complement of $\Int N(F)$ is homeomorphic to $F\times [0,1]$, and hence is a handlebody of genus twice the genus of $F$, with $\partial'_v N(F) = \partial F\times [0,1]$. Cutting along   $D(\alpha)$ introduces two strips of vertical boundary with cores isotopic to $\alpha$, as illustrated in Figure~\ref{decomprod}, from which it follows   that  the complement of   $N\left(B^G(\alpha)\right)$  is homeomorphic to $F|_\alpha \times I$, with   $\partial'_v N\left(B^G(\alpha)\right) = \partial \left(F|_\alpha\right)\times [0,1]$.
\end{proof}

\begin{figure}[ht]
\labellist
\small
\pinlabel {\footnotesize\textcolor{red}{$\alpha \times [0,1]$}} at 102 558
\pinlabel {$F \times [0,1]$} at 375 490
\pinlabel {$F' \times [0,1]$} at 375 62
\pinlabel {\footnotesize $X_\kappa \setminus N\left(B^G\right)$} at 622 185
\endlabellist
\begin{center}
\includegraphics[scale=.4]{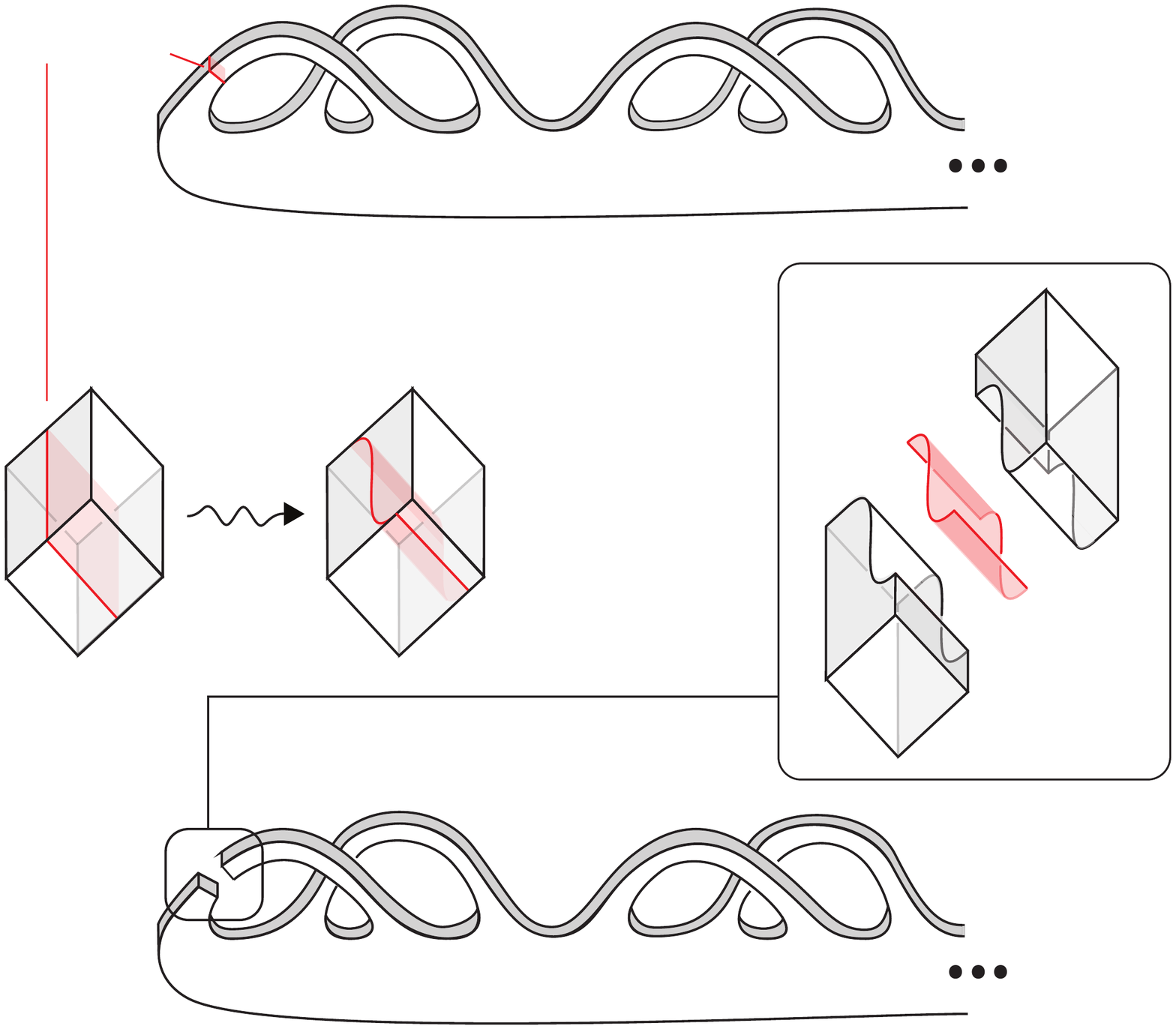}
\end{center}
\caption{$F\times [0,1]$ to  $B^G=\langle F,D(\alpha)\rangle$.}\label{decomprod}
\end{figure}

We next generalize the notation above for a particular family of splittings of the branched surface  $B^G(\alpha)=\langle F; D(\alpha)\rangle$. 
  
\begin{notation}
Suppose $\alpha_1, \alpha_2, \dots, \alpha_{n}=\alpha$ is a sequence of pairwise tight oriented arcs properly embedded in $F$, with $\alpha_i \neq \pm \alpha_{i+1} (\bmod n)$.

Let $D_i= \alpha_{i}\times [\frac{i-1}{n},\frac{i}{n}]$, oriented so that $\partial D_i$ contains  $\alpha_{i}\times \{\frac{i-1}{n}\}$ as a positively oriented subarc. Let $F_i = F \times \{\frac{i}{n}\}$.

We denote by $B^G(\alpha_1, \alpha_2, \dots, \alpha_{n}) = \langle F; (D_i)_{i=1}^n \rangle$ the branched surface obtained, under the identification $(x,1) \sim (\phi(x),0)$, by smoothing the spine $\bigcup_{i=1}^n \left(F_{i-1} \cup D_i\right)$ with co-orientation consistent with the given orientations on the product disks $D_i$ and copies of the fiber $F_i$. Notice that  $B^G(\alpha_1, \alpha_2, \dots, \alpha_{n})= \langle F; (D_i)_{i=1}^n \rangle$  is a splitting of $B^G(\alpha)$ for $n>1$.
\end{notation}

 \subsection{Laminar branched surfaces} \label{laminarsection}  A minimal set of a co-oriented taut foliation  $\mathcal F$ is necessarily essential, and therefore carried by an essential branched surface:

\begin{definition} \cite{GO}\label{defn:GO}
A branched surface $B$ in a  closed 3-manifold $M$ is called an {\it essential} branched surface if it satisfies the following conditions:
\begin{enumerate}
\item  $\partial_h N(B)$ is incompressible in $M\setminus \Int(N(B))$, no component of  $\partial_h N(B)$ is a sphere and $M\setminus \Int(N(B))$ is irreducible.

\item There is no monogon in $M\setminus \Int(N(B))$; i.e., no disk $D\subset M\setminus \Int(N(B))$
with $\partial D = D\cap N(B) = \alpha\cup\beta$, where $\alpha\subset\partial_v N(B)$ is in an interval fiber of $\partial_v N(B)$ and $\beta\subset \partial_h N(B)$

\item There is no Reeb component; i.e., $B$ does not carry a torus that bounds a solid torus in $M$.
\end{enumerate}
\end{definition}

In practice, it can be difficult to determine whether an essential branched surface fully carries a lamination.  In  \cite{Li0,Li}, Li defines the notion of \emph{laminar}, a very useful criterion that is sufficient (although not necessary) to guarantee that an essential branched surface fully carries a lamination. We recall the necessary definitions here.

\begin{definition} \cite{Li0,Li} Let $B$ be a branched surface in a 3-manifold $M$.   A  {\it sink disk}  is a disk branch sector $D$  of 
$B$  for which the   cusp direction of each component of  $\Gamma^1\cap \overline{D}$  points into $D$  (as shown in Figure \ref{sinkdisk}).
 A {\it half sink disk} is a sink disk which has nonempty intersection with $\partial M$. 
\end{definition}

\begin{figure}[ht]
\begin{center}
\includegraphics[scale=.2]{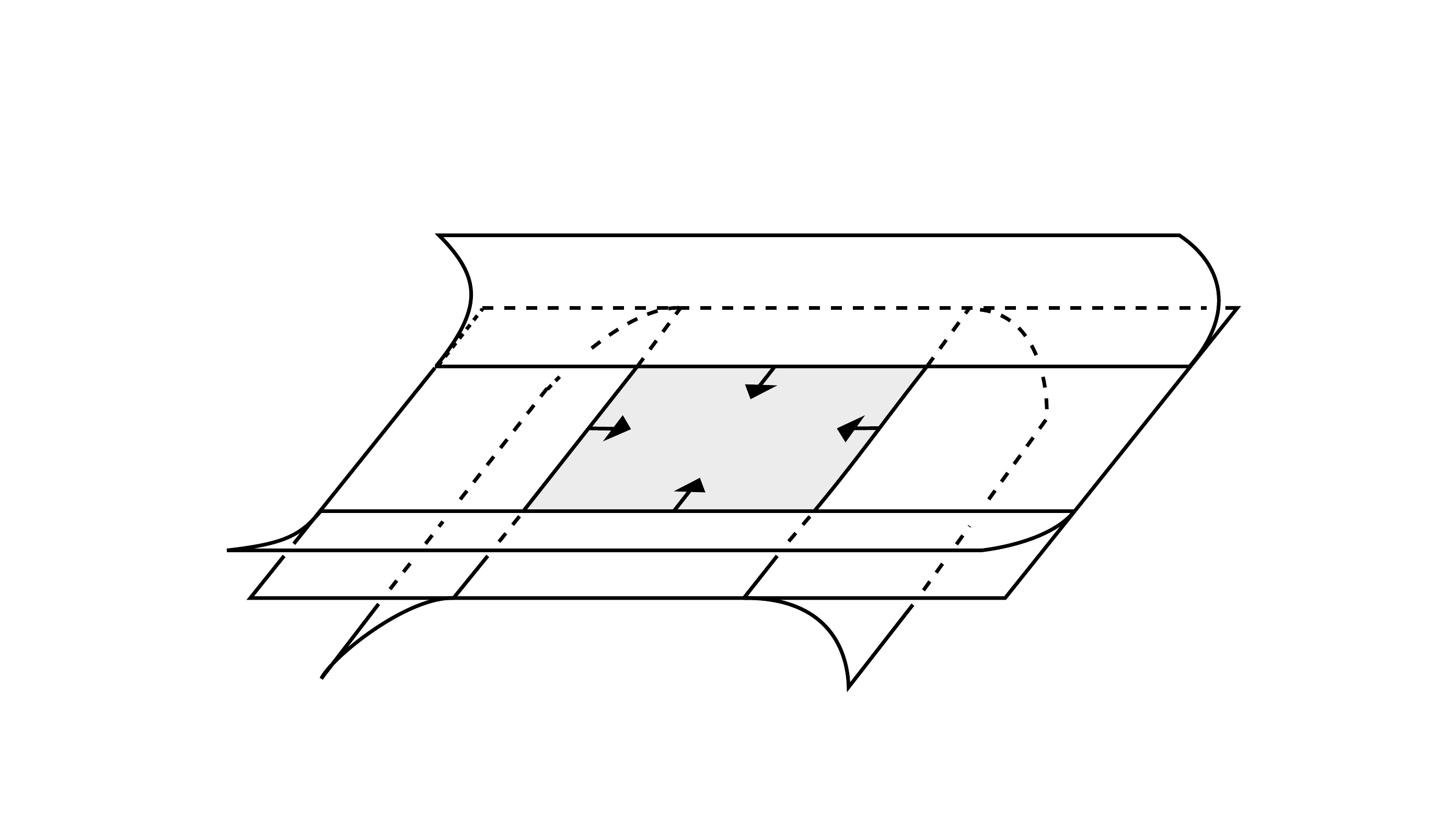}
\end{center}
\caption{A sink disk.}\label{sinkdisk}
\end{figure}

Sink disks  and half sink disks play a key role in Li's notion  of  laminar branched surface. A   sink disk  or half sink disk  $D$ can be  eliminated by splitting $D$ open along a disk in its interior; these trivial splittings must be ruled out:

\begin{definition} (\cite{Li0}, \cite{Li})
Let $D_1$ and $D_2$ be the two disk components of the horizontal boundary of a $D^2 \times I$ region in $M \setminus \Int(N(B))$. If the projection $\pi: N(B) \to B$ restricted to the interior of $D_1 \cup D_2$ is injective, i.e, the intersection of any $I$-fiber of $N(B)$ with $\Int(D_1) \cup \Int(D_2)$ is either empty or a single point, then we say that $\pi(D_1 \cup D_2)$ forms a \emph{trivial bubble} in $B$.
\end{definition}

\begin{definition} (\cite{Li0}, \cite{Li})\label{defn:Li}
An essential branched surface $B$ in a compact 3-manifold $M$ is called  {\it laminar}   if it satisfies the following conditions:
\begin{enumerate}
\item $B$ has no trivial bubbles.
\item $B$ has no sink disk or half sink disk.
\end{enumerate}
\end{definition}

\begin{theorem} (\cite{Li0}, \cite{Li})
Suppose $M$ is a compact and orientable 3-manifold. 
\begin{itemize}
\item[(a)] Every laminar branched surface in $M$ fully carries an essential lamination.
\item[(b)]Any essential lamination in $M$ that is not a lamination by planes is fully carried by a laminar branched surface.
\end{itemize}
\end{theorem}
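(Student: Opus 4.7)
This is a cited result of Li, so I will sketch the conceptual plan for each direction rather than execute the full technical argument.

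For (b), I would argue constructively. Given an essential lamination $\mathcal L$ that is not a lamination by planes, the plan is to thicken $\mathcal L$ to a foliated regular neighbourhood $N(\mathcal L)$ whose $I$-fibres are transverse to the leaves of $\mathcal L$, and then collapse each $I$-fibre to a point to obtain a branched surface $B$ that fully carries $\mathcal L$. The next step is to verify the laminar conditions. Essentiality should transfer directly from $\mathcal L$ to $B$: incompressibility of $\partial_h N(B)$ corresponds to (end-)incompressibility of the leaves of $\mathcal L$; absence of Reeb tori for $B$ is equivalent to $\mathcal L$ containing no Reeb torus; and the no-monogon condition rules out the analog of a Reeb-like branching. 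The non-plane hypothesis is exactly what will rule out sink disks: a sink (or half sink) disk in $B$ would correspond to a compact disk leaf or a boundary-parallel disk in $\mathcal L$, which cannot occur in an essential lamination with no plane leaves. Trivial bubbles can be removed by a preliminary isotopy of $N(\mathcal L)$ ensuring that no two close leaves bound a product $D^2\times I$ whose projection to $B$ is injective.

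For (a), the harder direction, the plan is to build a lamination inside $N(B)$ by an iterated splitting procedure. First, I would produce a positive solution to the branch (switch) equations on the sectors of $B$, which yields a surface $S_0\subset N(B)$ carried by $B$. The laminar hypothesis is then used to repeatedly split $N(B)$ along arcs transverse to the $I$-fibration: near any double arc of the branching locus, leaves can be pushed apart in the cusp direction. The key point is that the absence of sink disks and half sink disks guarantees that every sector has at least one cusp pointing outward, so these splittings propagate across the entire branched surface without obstruction, and the no-trivial-bubble condition ensures that a sequence of splittings does not collapse on itself. Taking an inverse limit of such splittings would then yield a genuine lamination $\mathcal L$ fully carried by $B$, and essentiality of $\mathcal L$ would be inherited from essentiality of $B$ via the correspondence between their complementary regions.

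The main obstacle in (a) will be the combinatorial argument that the no-sink-disk and no-half-sink-disk conditions genuinely suffice for splittings to propagate globally and to continue indefinitely. This requires a careful inductive construction in which, after each splitting, one verifies that the modified branched surface remains laminar, so that the process can be iterated. The full technical execution is carried out in \cite{Li0,Li}.
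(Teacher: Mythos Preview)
The paper does not prove this theorem; it is quoted as a result of Li \cite{Li0,Li} and used as a black box, so there is no proof in the paper to compare your sketch against.

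That said, your sketch for (b) contains a real misconception. A sink disk is a disk \emph{sector} of the branched surface all of whose boundary cusps point inward; it has no direct correspondence with a disk leaf or plane leaf of $\mathcal L$. The naive branched surface obtained by collapsing the $I$-fibres of $N(\mathcal L)$ can and typically does have sink disks even when $\mathcal L$ has no plane leaves. Li's actual mechanism is that the presence of a non-plane leaf supplies nontrivial topology (an essential curve or arc in a leaf) along which one can \emph{split} the branched surface so as to break up each sink disk; the hypothesis ``not a lamination by planes'' enters through this splitting step, not through any identification of sink disks with disk leaves. Your remark about trivial bubbles being removable by a preliminary isotopy is also too casual: they are eliminated as part of the same splitting process.

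For (a) your outline is closer in spirit to Li's, but the detour through a positive solution of the branch equations and an initial carried surface $S_0$ is not how the argument goes. Li builds the lamination directly as an inverse limit of successive splittings of $N(B)$, with the no-sink-disk condition guaranteeing that at every stage some sector admits an outward cusp along which a further splitting is possible. No intermediate carried surface is needed, and indeed a laminar branched surface need not carry any compact surface at all.
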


In general, the branched surface $B^G$ of Lemma~\ref{BGabai} is not laminar. However, $B^G$ admits a splitting to a laminar branched surface. Moreover, this splitting can be chosen so that the boundary train track of the resulting laminar branched surface contains the meridian as subtrack.

\begin{definition}
Let $X$ be an oriented fibered 3-manifold, with monodromy $\phi$ and compact  fiber  $F$;  assume $\partial F$ is connected and nonempty. Let $\alpha$ be a tight arc properly embedded in $F$.  A sequence $\phi(\alpha) = \alpha_0, \alpha_1, \alpha_2, \ldots, \alpha_n = \alpha$ of oriented arcs properly embedded in $F$ is \emph{$\alpha$-sparse} if, for all $j$, 
\begin{enumerate}
\item $\alpha_j\cap\alpha_{j+1}=\emptyset$  and
\item $\alpha_j$ and $\alpha_{j+1}$ are non-isotopic.
\end{enumerate}

\end{definition}

\begin{definition}
For $i = 0, 1$, an $\alpha$-sparse sequence is \emph{$i$-end-effective}, if the endpoints $\alpha_j(i), 0\le j\le n,$ give a monotonic sequence in the interval $\delta'_i(\alpha)$.  An $\alpha$-sparse sequence is \emph{end-effective} if it is both $0$-end-effective and $1$-end-effective.
\end{definition}

\begin{figure}[ht]
\begin{center}
\scalebox{.35}{\includegraphics{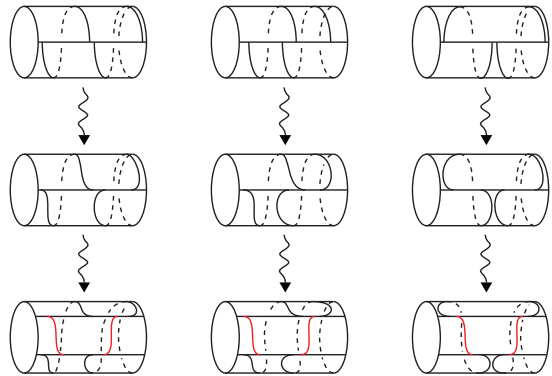}}
\end{center}
\caption{ Sequences that are $i$-end-effective (the example $n=2$), as necessary in the construction of $ B^G(\alpha_1, \alpha_2, \dots, \alpha_{n}) $.}\label{effectiveendpoints}
\end{figure}

\begin{thm} \label{Gcanbemadelaminar}     
Let $X$ be an oriented fibered 3-manifold, with monodromy $\phi$ and compact  fiber  $F$;  assume $\partial F$ is connected and nonempty. Let $\alpha$ be a tight, non-separating, oriented arc properly embedded in $F$ such that $\alpha\ne\phi(\alpha)$ (as oriented arcs).  If both transition arcs are  positive (respectively, negative), then there is a $0$-end-effective (respectively, $1$-end-effective) sequence $\phi(\alpha)=\alpha_0,\alpha_1, \alpha_2, \dots, \alpha_{n} = \alpha$  such that the train track $\tau = B^G(\alpha_1, \alpha_2, \dots, \alpha_{n})\cap \partial M$ contains as subtrack  the meridian, with  the meridional subtrack  necessarily containing   $\delta_0(\alpha)$ (respectively, $\delta_1(\alpha)$).
  
If one transition arc is positive and the other negative, then  there   is an end-effective sequence $\phi(\alpha) = \alpha_0, \alpha_1, \alpha_2, \dots, \alpha_{n}=\alpha$  such the   train track $\tau = B^G(\alpha_1, \alpha_2, \dots, \alpha_{n})\cap \partial M$ contains as subtrack two disjoint copies of  the meridian, with one component of the subtrack containing $\delta_0(\alpha)$ and the other containing $\delta_1(\alpha)$.  
  
In each case,   the resulting branched surface $B^G(\alpha_1, \alpha_2, \dots, \alpha_{n})$  is necessarily laminar.
\end{thm}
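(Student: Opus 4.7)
The proof naturally splits into three tasks: (i) build the sequence $\phi(\alpha)=\alpha_0,\alpha_1,\dots,\alpha_n=\alpha$ with the required end-effectiveness; (ii) exhibit the meridian(s) as legal paths in the boundary train track $\tau=B^G(\alpha_1,\dots,\alpha_n)\cap\partial M$; and (iii) verify the three conditions of Definition~\ref{defn:Li} (essentiality, no trivial bubbles, and no sink or half sink disks).

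\textbf{Building the sequence.} Since $\alpha$ is non-separating, $F|_\alpha$ is connected and $\phi(\alpha)$ meets $\alpha$ in finitely many efficient intersection points. I would construct $\alpha_1,\dots,\alpha_{n-1}$ inductively by sliding endpoints a short distance along $\partial F$, resolving any newly created inessential intersection with the next arc by an outermost-disk surgery until disjointness and non-isotopy with the neighbour are achieved. The direction of sliding is dictated by the transition signs: in the both-positive case, the $0$-endpoints move monotonically through $\delta'_0(\alpha)$ from $\phi(\alpha(0))$ toward $\alpha(0)$ while the $1$-endpoints are kept fixed; the both-negative case is symmetric at the $1$-end; in the mixed case both endpoints slide simultaneously, which is consistent because $\{\alpha(0),\alpha(1)\}$ separates $\partial F$ so the two slides take place in disjoint arcs. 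Finiteness of the initial intersection data bounds the number of intermediate arcs needed.

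\textbf{Meridian in $\tau$.} Each $D_i$ meets $\partial M$ in two vertical segments $\alpha_i(\epsilon)\times\bigl[\tfrac{i-1}{n},\tfrac{i}{n}\bigr]$, $\epsilon=0,1$, which together with the horizontal circles $\partial F\times\{j/n\}$ make up $\tau$. Monotonic advancement of the $0$-endpoints through $\delta'_0(\alpha)$ guarantees that the concatenation of the $n$ vertical segments at the $0$-end with the unique minimal sub-arcs of consecutive horizontal circles between successive $\alpha_i(0)$ traces out a closed curve on $\partial M$ isotopic to $\mu_0(\alpha)=\delta_0(\alpha)\cup\delta'_0(\alpha)$. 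Using the co-orientations of the positive-transition model of Figure~\ref{RVLV} smoothed as in Figure~\ref{smoothingexample}, every switch along this concatenation is traversed with the cusp on the outside, so the curve is a legal path carried by $\tau$ and thus exhibits $\mu$ as a subtrack. The symmetric argument at the $1$-end handles the negative case, and running both in parallel produces the two disjoint meridian subtracks required in the mixed-sign case, one containing $\delta_0(\alpha)$ and the other $\delta_1(\alpha)$.

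\textbf{Laminarity and the main obstacle.} Essentiality follows from iterating the argument of Lemma~\ref{BGabai}: the complement of $N(B^G(\alpha_1,\dots,\alpha_n))$ is a disjoint union of product $I$-bundles over subsurfaces of $F$, hence irreducible with incompressible horizontal boundary and admitting no monogons or Reeb tori. The sparse condition (non-isotopy of consecutive arcs) rules out trivial bubbles, since a $D^2\times I$ between $D_i$ and $D_{i+1}$ would force $\alpha_i$ and $\alpha_{i+1}$ to be parallel. The serious work is excluding sink disks and half sink disks: the disk sectors are the components of $F|_{\alpha_i\cup\alpha_{i+1}}$ at each level together with the split product disks $D_i$, and the transition signs together with the chosen sliding direction jointly determine every cusp orientation on $\partial_v N$. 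The role of $i$-end-effectiveness is precisely to align these cusps so that each disk sector has at least one boundary cusp pointing outward, preventing half sink disks at the $\partial M$-facing fiber sectors. I expect the principal obstacle to be the case-by-case verification that interior sectors also escape sinkiness; in the mixed-sign case there is the additional subtlety of confirming that the two simultaneous slides do not cooperate to create an interior disk sector with every cusp pointing inward, which is where the precise end-effective progression, rather than merely sparseness, will be essential.
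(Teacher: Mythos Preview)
The paper does not give a self-contained proof of this theorem: its entire argument is a citation to the main constructions of \cite{Rfib1} together with Corollaries~6.4 and~6.6 of \cite{Rfib2}, with a one-line remark about the overlap case $\delta'_0(\alpha)\cap\delta'_1(\alpha)\ne\emptyset$. So there is no detailed argument here to compare yours to; what one can ask is whether your sketch plausibly reproduces the content of those references.

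Your outline has the right architecture (build the interpolating sequence, read the meridian off the boundary train track, then check Li's conditions), and your description of the complementary regions and of why essentiality and the absence of trivial bubbles hold is accurate. But the heart of the theorem is the \emph{existence} of the $\alpha$-sparse, end-effective sequence, and here your construction has a concrete gap. In the both-positive case you say the $0$-endpoints slide monotonically through $\delta'_0(\alpha)$ ``while the $1$-endpoints are kept fixed.'' That cannot work: the sequence must begin at $\phi(\alpha)$ and end at $\alpha$, and in general $\phi(\alpha)(1)\ne\alpha(1)$, so the $1$-endpoint must move as well. The $0$-end-effective condition constrains only the $0$-endpoints; the $1$-endpoints are free to do whatever is needed to achieve disjointness and non-isotopy of consecutive arcs, and controlling them is exactly where the work lies. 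The ``slide a little and surger outermost bigons'' heuristic does not by itself guarantee that the resulting arc is essential, non-separating, non-isotopic to its predecessor, and that the process eventually terminates at $\alpha$; the actual construction in \cite{Rfib1,Rfib2} builds the interpolating arcs by a careful recursive procedure that tracks intersection data with $\alpha$ and $\phi(\alpha)$ simultaneously, and this is what you would need to reproduce.

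The sink/half-sink disk analysis you flag as ``the principal obstacle'' is genuinely the second hard point, and you are right that mere sparseness is not enough. But note that the sink-disk check is not independent of the sequence construction: in \cite{Rfib1,Rfib2} the interpolating arcs are chosen precisely so that the cusp directions come out right, so one cannot first build an arbitrary sparse end-effective sequence and then hope to verify laminarity afterward. Your proposal treats these as separate steps, which is why the final paragraph remains speculative.
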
 

\begin{proof} This is primarily a restatement of results found in \cite{Rfib1,Rfib2}.  There are three possibilities, as illustrated in Figure~\ref{effectiveendpoints}.

When the transitions have a common sign and  $\delta'_0(\alpha)\cap\delta'_1(\alpha)=\emptyset$, this is  the main result of \cite{Rfib1} together with Corollary~6.4 of \cite{Rfib2}. Corollary~6.4 of \cite{Rfib2} is easily modified to allow for the case that  $\delta'_0(\alpha)\cap\delta'_1(\alpha)\ne\emptyset$; a hint is shown in Figure~\ref{overlapcase}. When the transitions are of opposite sign, this is the main construction of \cite{Rfib1} together with Corollary~6.6 of \cite{Rfib2}.  \end{proof}

\begin{figure}[ht]
\labellist
\small
\pinlabel {\footnotesize $F$} [Bl] at 280 185
\pinlabel {\footnotesize $F$} [Bl] at 613 185
\pinlabel {\footnotesize $\partial F$} [tr] at 192 148
\pinlabel {\footnotesize $\partial F$} [tr] at 524 148
\pinlabel or [Bl] at 406 143
\endlabellist
\begin{center}
\includegraphics[scale=.3]{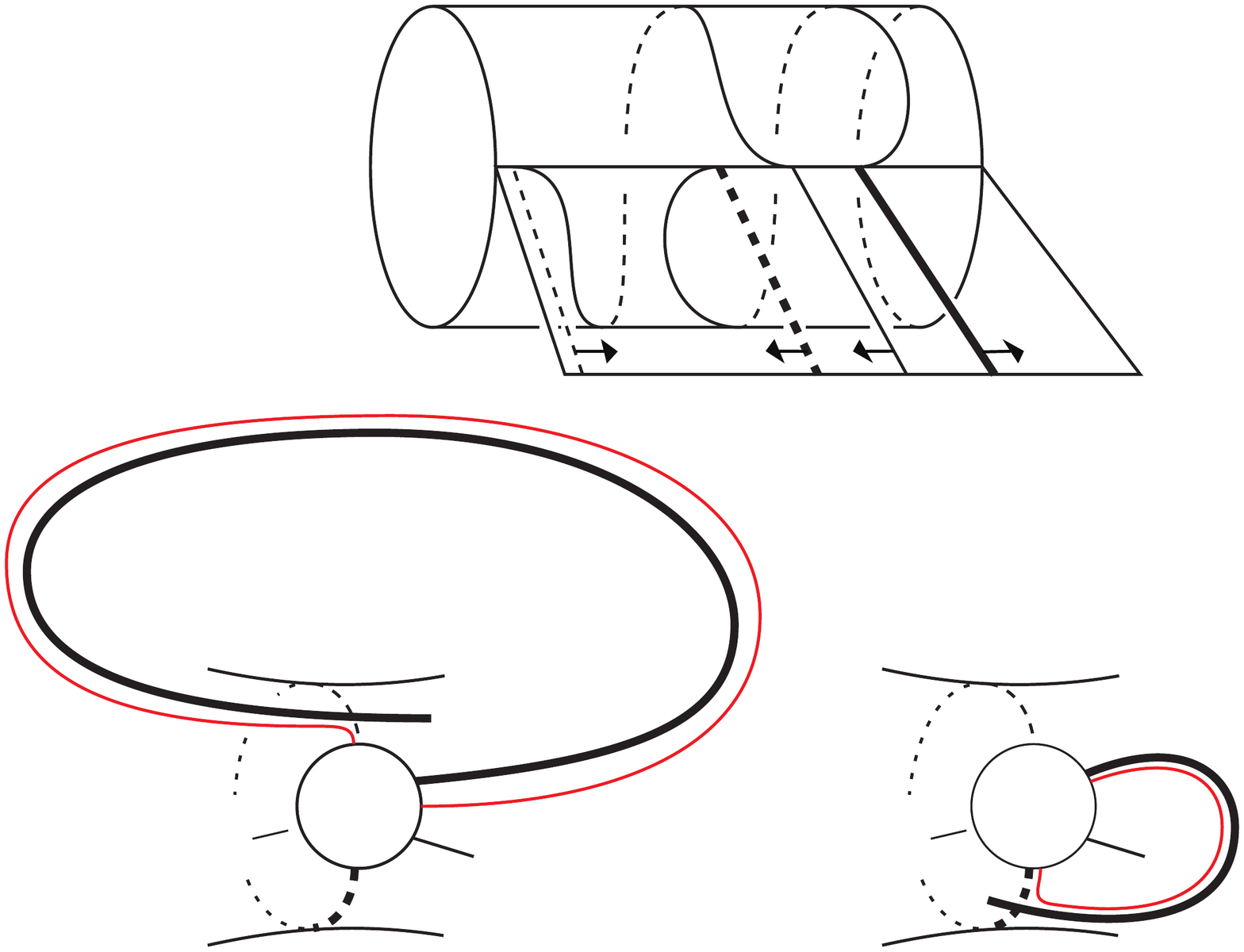}
\end{center}
\caption{}\label{overlapcase}
\end{figure}

In particular, it follows that  $B^G(\alpha_1, \alpha_2, \dots, \alpha_{n})\cap\partial M$ carries all meridians.  (Recall that in this general context, a meridian is defined to be any curve having a single point of minimal transverse intersection with $\partial F$, and we use the definite article and the letter $\mu$ when referring to the distinguished meridian defined in Section \ref{section: prelim}.)  When the transition arcs are of opposite sign, $B^G(\alpha_1, \alpha_2, \dots, \alpha_{n})\cap\partial M$  \emph{fully} carries all meridians except $\mu$.    When both transition arcs have the same sign, $B^G(\alpha_1, \alpha_2, \dots, \alpha_{n})\cap\partial M$  fully carries all meridians except the two, which we call \emph{extremal}, obtained by taking the union of  $\delta_i(\alpha)$ with, respectively,   each of the components of  $\partial F \setminus \delta_i(\alpha)$. When $\delta_{i}(\alpha)$ is positive (respectively, negative), the extremal meridians are $\mu$ and the simple closed curve of slope  $\frac{1}{1}$ (respectively $-\frac{1}{1}$).   It follows that the train track $B^G(\alpha_1, \alpha_2, \dots, \alpha_{n})\cap\partial M$ fully carries the open interval of slopes that is bounded by these extremal meridians and contains all other meridians.

\section{Connected sums of fibered knots are persistently foliar}\label{proofofmaintheorem}

\begin{thm}\label{main} Suppose $\kappa_1$ and $\kappa_2$ are nontrivial fibered knots in $S^3$.
Any nontrivial slope on $\kappa=\kappa_1\# \kappa_2$ is strongly realized  by a co-oriented taut foliation  that  has a unique   minimal set, disjoint from   $\partial N(\kappa)$.  Hence $\kappa_1\#\kappa_2$  is persistently foliar.
\end{thm}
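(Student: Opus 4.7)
The plan is to build a single laminar branched surface $B$ in $X_\kappa$ whose boundary train track on $\partial N(\kappa)$ fully carries every slope $r\in\mathbb{R}P^1\setminus\{\mu\}$, and then for each such $r$ extract from $B$ an essential lamination $\mathcal{L}_r$ strongly realizing $r$ that extends to a co-oriented taut foliation $\mathcal{F}_r$ with $\mathcal{L}_r$ as its unique minimal set. Represent $X_\kappa$ as the mapping torus $F\times[0,1]/(x,1)\sim(\phi(x),0)$, where $F = F_1\natural F_2$ is the boundary connect sum of the fibers, with $\phi$ restricting to $\phi_i$ on $F_i$ and to the identity on a collar of the connecting arc. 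For each $i$, fix a tight essential nonseparating oriented arc $\alpha_i\subset F_i$ and apply Theorem~\ref{Gcanbemadelaminar} to $\phi_i$ to obtain an $\alpha_i$-sparse, end-effective sequence whose product disks determine a laminar splitting $B_i$ of $B^G(\alpha_i)$ within $X_{\kappa_i}$. Because $\alpha_1$ and $\alpha_2$ lie in disjoint pieces of $F$, all product disks involved may be stacked pairwise disjointly at distinct heights in the mapping torus; forming the spine from the requisite copies of $F$ together with these product disks and smoothing with a fixed consistent orientation on each sector produces a branched surface $B$ whose restriction to either summand's portion of $X_\kappa$ coincides with $B_i$.

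By iterated application of Lemma~\ref{BGabai}, every complementary region of $N(B)$ in $X_\kappa$ is a product of a subsurface of $F$ with an interval, with vertical boundary exactly $\partial_v N(B)$, except for the single region $U$ containing $\partial N(\kappa)$, which inherits a sutured-manifold structure from both summand constructions simultaneously. The absence of sink disks, half sink disks, and trivial bubbles in $B$ will follow from their absence in each $B_i$, so $B$ is laminar. The boundary train track $\tau = B\cap\partial N(\kappa)$ decomposes as $\tau_1\cup\tau_2$, where $\tau_i$ lies on the annular piece $A_i\subset\partial N(\kappa)$ coming from $\partial N(\kappa_i)$, and the two meet along the pair of meridian circles $\partial A_1 = \partial A_2$. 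The key claim is that $\tau$ fully carries every slope $r\ne\mu$: any closed curve of slope $r$ meets the meridian circles $\partial A_i$ transversely, decomposing into arcs in $A_1$ and $A_2$ with matched endpoints, and by Theorem~\ref{Gcanbemadelaminar} each $\tau_i$ realizes all the necessary directions in $A_i$; concatenating such arcs yields a curve of slope $r$ fully carried by $\tau$.

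For each $r\ne\mu$, Li's theorem produces an essential lamination $\mathcal{L}_r$ fully carried by $B$ that meets $\partial N(\kappa)$ in a foliation by slope-$r$ curves. Foliating each product complementary region horizontally and foliating $U$ consistently with the boundary slope-$r$ foliation extends $\mathcal{L}_r$ to a co-oriented taut foliation $\mathcal{F}_r$; the complementary product foliations limit only on $\mathcal{L}_r$, so $\mathcal{L}_r$ is the unique minimal set of $\mathcal{F}_r$, and since $\tau$ fully carries $r$, this minimal set is disjoint from $\partial N(\kappa)$. Persistence follows immediately. The decisive technical step will be the key claim together with the verification that $B$ is laminar at the seams between the two summand constructions: a priori each $\tau_i$ carries only an interval of slopes (when its summand is right- or left-veering), and the gain must come from the freedom to concatenate arcs from the two summands across the meridional circles separating $A_1$ from $A_2$.
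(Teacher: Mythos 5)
Your proposal aims at the right theorem via a branched surface whose complementary regions are mostly products, but the architecture you describe diverges from the paper's in ways that create a genuine gap in the argument, and the step you flag as ``decisive'' is in fact where your approach and the paper's part ways.

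First, a logical inconsistency in the final paragraph: you take $B$ to meet $\partial N(\kappa)$ in a train track $\tau$, extract a lamination $\mathcal{L}_r$ fully carried by $B$ that ``meets $\partial N(\kappa)$ in a foliation by slope-$r$ curves,'' then declare $\mathcal{L}_r$ to be the unique minimal set of $\mathcal{F}_r$, and finally assert that this minimal set is disjoint from $\partial N(\kappa)$. These last two statements contradict each other. In the paper this is avoided structurally: the entire spine $\Sigma$ --- including a torus $T$ parallel to $\partial X_\kappa$ --- is isotoped into the interior of $X_\kappa$, so $B$ is disjoint from $\partial N(\kappa)$, the lamination $\mathcal{L}$ fully carried by $B$ is disjoint from the boundary, and only in the final extension step are noncompact leaves (spiralling onto $\mathcal{L}$) added to meet $\partial N(\kappa)$ at the desired slope. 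Disjointness of the minimal set from the boundary then falls out automatically; it cannot fall out of your construction as stated.

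Second, your spine omits two ingredients that are essential to the paper's argument: the boundary-parallel torus $T$ and the summing-sphere annulus $A = P \cap X_\kappa$. It is precisely the co-orientations on $A$ together with the two Type~C smoothings at $\mu_0(\alpha_1)$ and $\mu_0(\alpha_2)$ that make the complementary region containing $\partial N(\kappa)$ isomorphic (as a sutured manifold) to $\left(\partial X_\kappa \times I,\, V_1 \cup V_2\right)$ with two \emph{disjoint} meridional annular sutures (Proposition~\ref{Gabaicomplements}). This is the structure that lets every nonmeridional slope be realized, by filling $Y_\partial$ with the auxiliary annulus $A_\lambda$ and product disk $D_\mu$ so that the resulting train track on $\partial N(\kappa)$ fully carries every nontrivial slope (Figure~\ref{figure: realizes all slopes}). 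By contrast, you propose to produce full carrying of every $r \ne \mu$ by concatenating arcs across the two meridional circles in $\tau_1 \cup \tau_2$, but you never prove this, and, as you yourself note, each $\tau_i$ fully carries only an interval of slopes when $\kappa_i$ is right- or left-veering; a proof that concatenation suddenly fully carries \emph{every} nonmeridional slope, uniformly in $r$ and with the co-orientations all matching across the seam, is the whole difficulty and is not supplied. The paper sidesteps this entirely: it does not rely on the boundary train track of the spine at all, but instead designs $Y_\partial$ so that it can be filled at any slope by leaves added after the fact. You should either adopt the paper's mechanism (push $\Sigma$ into the interior, include $T$ and $A$, and fill $Y_\partial$ via $A_\lambda \cup D_\mu$) or supply a complete proof of your concatenation claim together with a different argument for disjointness of the minimal set from the boundary.
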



\begin{cor} \label{onesummandenough}  Suppose $\kappa=\kappa_1\#\cdots \#\kappa_n$ is a connected sum of  knots.  If at least one of the $\kappa_i$ is a nontorus alternating or Montesinos knot or a connected sum of fibered knots, then $\kappa$ is persistently foliar.\qed
\end{cor}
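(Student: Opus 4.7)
The plan is to show that each of the listed knot types is itself persistently foliar, and then invoke Corollary~\ref{consum} to pass to an arbitrary connected sum containing such a summand. The entire proof is thus a bookkeeping argument that reduces the statement to results available elsewhere in the literature (and to Theorem~\ref{main} above).

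First I would restate what needs to be verified: by Corollary~\ref{consum}, it suffices to produce, for each $\kappa_i$ of the stated type, a co-oriented taut foliation in $X_{\kappa_i}$ that strongly realizes each nontrivial boundary slope. Given such foliations in a summand, Corollary~\ref{consum} immediately promotes persistent foliar-ness to $\kappa = \kappa_1\#\cdots\#\kappa_n$, regardless of what the other summands are.

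Next I would handle the three cases separately. If $\kappa_i$ is a nontorus alternating knot, then $\kappa_i$ is persistently foliar by the main theorem of \cite{DR2}. If $\kappa_i$ is a nontorus Montesinos knot, then $\kappa_i$ is persistently foliar by the results of \cite{DR3, DR4}. If $\kappa_i$ is itself a nontrivial connected sum of fibered knots $\kappa_i = \kappa_{i,1}\# \kappa_{i,2}\#\cdots\# \kappa_{i,m}$, then (grouping $\kappa_{i,2}\#\cdots\#\kappa_{i,m}$ together so that $\kappa_i$ is displayed as a connected sum of two fibered knots, using that connected sums of fibered knots are fibered) Theorem~\ref{main} applies and gives persistence. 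In each of the three cases we thus have a persistently foliar summand of $\kappa$.

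The ``main obstacle'' is simply the verification that each cited source produces foliations that strongly realize the boundary slope, as opposed to merely producing taut foliations in the surgered manifold; this is the content of the constructions in \cite{DR2,DR3,DR4} and of Theorem~\ref{main} here, so no additional work is required. A final line then applies Corollary~\ref{consum} to conclude that $\kappa$ itself is persistently foliar, completing the argument.
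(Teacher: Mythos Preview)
Your overall strategy matches the paper's: reduce to showing that the distinguished summand is itself persistently foliar, then invoke Corollary~\ref{consum}. Your treatment of the nontorus alternating case (via \cite{DR2}) and of the connected-sum-of-fibered case (via Theorem~\ref{main}, after grouping summands) is correct and aligns with the paper.

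The gap is in the Montesinos case. You assert that every nontorus Montesinos knot is persistently foliar, but this is false: the pretzel knots $P(-2,3,2n+1)$ for $n\ge 3$ are hyperbolic Montesinos knots that are L-space knots, hence admit a Dehn surgery carrying no co-oriented taut foliation, and so cannot be persistently foliar. Accordingly, this is not what \cite{DR3,DR4} establish. What the paper actually extracts from \cite{DR2,DR3,DR4} is the weaker dichotomy that every prime alternating or Montesinos knot is \emph{either} persistently foliar \emph{or} fibered. The paper's proof then appeals to both Corollary~\ref{consum} and Theorem~\ref{main}: if the distinguished summand $\kappa_i$ falls on the persistently foliar side of the dichotomy, Corollary~\ref{consum} finishes; if it falls on the fibered side, the paper invokes Theorem~\ref{main}. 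Your argument collapses these two branches into one by asserting something that does not hold, so the Montesinos case needs to be reworked to follow the dichotomy the paper actually uses.
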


\begin{proof}
All Montesinos and prime alternating knots are either persistently foliar or fibered, by the results of \cite{DR2, DR3, DR4}.  Thus the result follows immediately from  Theorem~\ref{main}  and Corollary~\ref{consum}.
\end{proof}

\begin{cor} \label{alternatingmontesinos2}
Suppose $\kappa$ is a composite knot with a summand that is a nontorus alternating or Montesinos knot or the connected sum of two fibered knots,  and  $\widehat{X}_{\kappa}$ is a manifold obtained by non-trivial Dehn surgery along $\kappa$.   Then  $\widehat{X}_{\kappa}$ contains a co-oriented taut foliation;  hence, $\kappa$ satisfies the L-space Knot Conjecture.
\end{cor}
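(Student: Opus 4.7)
The plan is to combine Corollary \ref{onesummandenough} with the standard capping-off procedure that is already implicit in the definition of persistently foliar. First, apply Corollary \ref{onesummandenough}: since $\kappa$ has a summand that is a nontorus alternating or Montesinos knot or a connected sum of two fibered knots, $\kappa$ itself is persistently foliar.

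Next, let $r \in \mathbb{Q}$ be the (necessarily rational and nontrivial) slope defining the surgery $\widehat{X}_{\kappa}$. Persistent foliarity provides a co-oriented taut foliation $\mathcal{F}_r$ in $X_{\kappa}$ meeting $\partial N(\kappa)$ transversely in a foliation by parallel simple closed curves of slope $r$. Form $\widehat{X}_{\kappa}$ by gluing a solid torus $V$ whose meridian has slope $r$, and foliate $V$ by its meridian disks. Under the gluing, the boundary circles of these meridian disks match the circles of $\mathcal{F}_r \cap \partial N(\kappa)$, so the two foliations assemble into a single foliation $\widehat{\mathcal{F}}_r$ of $\widehat{X}_{\kappa}$. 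The co-orientation of $\mathcal{F}_r$ extends consistently across the disk leaves of $V$, and tautness is preserved, since any transversal of $\mathcal{F}_r$ ending on $\partial N(\kappa)$ can be extended into $V$ along a core-parallel arc, giving a transversal through any chosen point of $\widehat{X}_{\kappa}$.

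For the L-space Knot Conjecture statement, recall Krcatovich's theorem \cite{Krc} that nontrivial connected sums of knots in $S^3$ are never L-space knots. Since $\kappa$ is composite, it is not an L-space knot; since $\kappa$ is also persistently foliar by the first step, the two sides of the conjectural equivalence are consistent for $\kappa$, so $\kappa$ satisfies the L-space Knot Conjecture as claimed.

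The substantive content of this corollary lies entirely in Corollary \ref{onesummandenough}, which in turn rests on Theorem \ref{main}, Proposition \ref{anyproblem}, and the persistence results of \cite{DR2, DR3, DR4} for the named classes of summands. The main step here—extending $\mathcal{F}_r$ across the Dehn filling solid torus—is the standard capping-off already noted in the introduction and presents no new obstacle; the whole proof is a short assembly of prior results.
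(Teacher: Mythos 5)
Your approach matches the paper's (unstated) intent: the corollary is a direct assembly of Corollary~\ref{onesummandenough}, the standard capping-off procedure, and Krcatovich's theorem, and your description of the capping-off is fine.  There is, however, a gap in your treatment of the L-space Knot Conjecture.  The conjecture is a biconditional whose right-hand side has \emph{two} clauses: ``not an L-space knot'' \emph{and} ``has no reducible surgeries.''  You verify the first via Krcatovich, but you never address the second, and the phrase ``the two sides of the conjectural equivalence are consistent for $\kappa$'' does not actually establish the biconditional.

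To close the gap, note that persistent foliarity already forces $\kappa$ to have no reducible surgeries: every nontrivial rational surgery on $\kappa$ yields a closed 3-manifold carrying a co-oriented taut foliation, which is therefore irreducible or $S^2\times S^1$ by the Novikov--Rosenberg theorem, and $S^2\times S^1$ is excluded by Gabai's Property~R since $\kappa$ is nontrivial.  (Alternatively, a composite knot is a satellite that is not a cable, so Scharlemann's proof of the cabling conjecture for satellite knots gives no reducible surgeries directly.)  With this observation, the forward implication of the conjecture (persistently foliar implies not an L-space knot and no reducible surgeries) holds for $\kappa$, while the reverse implication holds trivially because $\kappa$ is in fact persistently foliar.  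Adding a sentence to this effect would complete the argument.
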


 We prove Theorem \ref{main} in the sections that follow.  First, in Section \ref{s: the spine}, we describe the spine, $\Sigma$, underlying the branched surface, $B$, that carries the minimal set of the desired foliations.  In Section \ref{s: firstsmoothing} we describe compatible co-orientations on $\Sigma$, smoothing it to obtain $B$.  In Section \ref{s: complement of B} we give a precise description of the complementary regions of $B$.  In Section \ref{s: no compact leaves} we prove that $B$ carries no compact leaves, and in Section \ref{s: fully carries} we prove that $B$ fully carries a lamination.  Finally, in Section \ref{s: extends to foliation}, we show that this lamination extends to a family of co-oriented taut foliations with unique common minimal set, carried by $B$, that strongly realize all boundary slopes.

\subsection{The spine $\Sigma$} \label{s: the spine}

Let $\kappa=\kappa_1\#\kappa_2\subset S^3$ be a connected sum, where   each of the knots $\kappa_1$ and $\kappa_2$  is  nontrivial and fibered, with fibers   $F_1$ and $F_2$  respectively.  Let $F$  denote the  band connect sum of  $F_1$ and $F_2$; so   $F$ is a fiber for $\kappa$  \cite{GabMurasugi}.

Let $P$ denote a summing sphere for this connected sum, cutting $F$ into $F_1$ and $F_2$.   Set $$A := P\cap (S^3\setminus \Int N(\kappa)) = P \cap X_\kappa.$$ Choose the isotopy representatives of  $\phi$ and $P$ so that  $A|_F = D(\beta)$ for an arc $\beta$ properly embedded in $F$.  Thus $\phi(\beta) = \beta$ and  the endpoints of $\beta$ are fixed points of $\phi$.

View $F$ as a compact surface properly embedded in $X_\kappa$, and view $F_1$ and $F_2$ as compact surfaces properly embedded in $X_\kappa|_A$.  Denote the component of $X_\kappa|_A$ containing $F_i$ by $X_{\kappa_i}$;  we observe that $X_{\kappa_i}$ is indeed homeomorphic to the complement of $\kappa_i$.   Let $T =\partial X_\kappa$. 

To simplify the exposition, we focus on the case that both $\kappa_1$ and $\kappa_2$ have right-veering monodromy. The case that they both have left-veering monodromy follows symmetrically. We address the remaining case, that $\kappa$ has monodromy that is neither right- nor left-veering, in Section~\ref{Remaining}.  

Choose non-separating, tight, properly embedded oriented arcs $\alpha_1$ in $F_1$ and $\alpha_2$ in $F_2$ disjoint from $\beta$ and such that $\phi(\alpha_i) \neq \alpha_i$, $i = 1, 2$.  Consider $D(\alpha_i) \in X_{\kappa_i}$.  Set  $\Sigma_0=T\cup A\cup F \cup D(\alpha_1) \cup D(\alpha_2).$       Notice that $\Sigma_0$ is not yet a spine as two surfaces meet transversely along $\beta$. To remedy this, isotope $F_2$ so that $F_2$ remains a properly embedded surface in $X_{\kappa_2}$, but $F_2\cap A$ is an isotopy representative of $\beta$ in $A$ that   meets $\beta$ transversely in a single point. (We could instead have chosen this representative to be disjoint from $\beta$.  Now set 
$$\Sigma=T\cup A\cup F_1\cup F_2 \cup D(\alpha_1) \cup D(\alpha_2).$$
To simplify notation, set $D_i=D(\alpha_i)$ for $i=1,2$.  
 
Finally, isotope $\Sigma$ into the interior of $X_\kappa$, so that $T$ is parallel to $\partial X_{\kappa}$, with the annulus $A$ still contained in the summing sphere $P$.

\subsection{The co-oriented branched surface $B$} \label{s: firstsmoothing} 

In this section, we describe a smoothing  of $\Sigma$ by fixing a compatible choice of co-orientations on the sectors of $\Sigma$. 

\begin{figure}[ht]
\labellist
\small
\pinlabel $T$ at 245 464
\pinlabel $T$ at 485 464
\pinlabel $F_1$ at 355 455
\pinlabel $-$ at 258 440
\pinlabel $-$ at 400 470
\pinlabel $+$ at 500 440
\pinlabel $A$ at 185 345
\pinlabel $+$ at 580 345
\pinlabel $-$ at 258 250
\pinlabel $+$ at 360 235
\pinlabel $-$ at 500 250
\pinlabel $T$ at 285 230
\pinlabel $T$ at 525 230
\pinlabel $F_2$ at 415 212
\endlabellist
\begin{center}
\includegraphics[scale=.35]{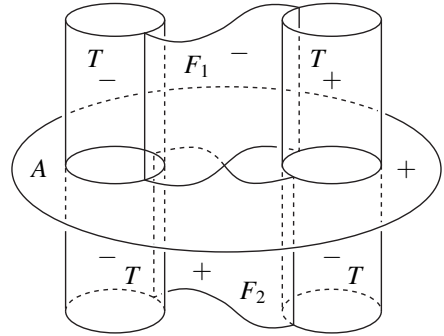}
\end{center}
\caption{The co-orientations on $\Sigma$ near $A$. }\label{SigmanearA}
\end{figure}

Choose a regular neighbourhood $N(A)$ of $A$ in $\Sigma$ such that the closure of $N(A)$   is disjoint from $D_1\cup D_2$, and fix co-orientations on the sectors of $N(A)$ as shown in Figure~\ref{SigmanearA}.   Give $F_1$ and $F_2$ the co-orientations that agree, respectively, with the co-orientations of $F_1\cap N(A)$ and  $F_2\cap N(A)$. Choose any co-orientations on $D_1$ and $D_2$. These induce orientations on $\alpha_1$ and $\alpha_2$.
Finally, cut $T$ open along $ \mu_0(\alpha_1)\cup\mu_0(\alpha_2)$ (as defined in Notation~\ref{notation: meridian})   and assign co-orientations to  the  resulting annuli components to agree with those of $T\cap N(A)$. We have thus described  co-orientations on the sectors of $\Sigma$.

\begin{figure}[ht]
\labellist
\small
\pinlabel $T$ at 160 540
\pinlabel $T$ at 325 540
\pinlabel $-$ at 355	 540
\pinlabel {Type C meridional smoothing} [Bl] at 385 530
\pinlabel $F_1$ at 225 475
\pinlabel $-$ at 265 465
\pinlabel $-$ at 160 450
\pinlabel $+$ at 355 450
\pinlabel $A$ at 95 430
\pinlabel $\mu_2$ [Bl] at 378 425
\pinlabel $+$ at 450 430
\pinlabel $\mu_1$ at 123 388
\pinlabel $-$ at 160 330
\pinlabel $+$ at 265 340
\pinlabel $+$ at 355 330
\pinlabel $F_2$ at 297 325
\pinlabel {Type C} [tr] at 130 305
\pinlabel {meridional smoothing} [tr] at 130 270
\pinlabel $+$ at 160 255
\pinlabel $T$ at 193 252
\pinlabel $T$ at 363 252
\pinlabel {Branching locus} [Bl] at 270 80
\pinlabel {on the annulus A} [Bl] at 270 45
\endlabellist
\begin{center}
\includegraphics[scale=.35]{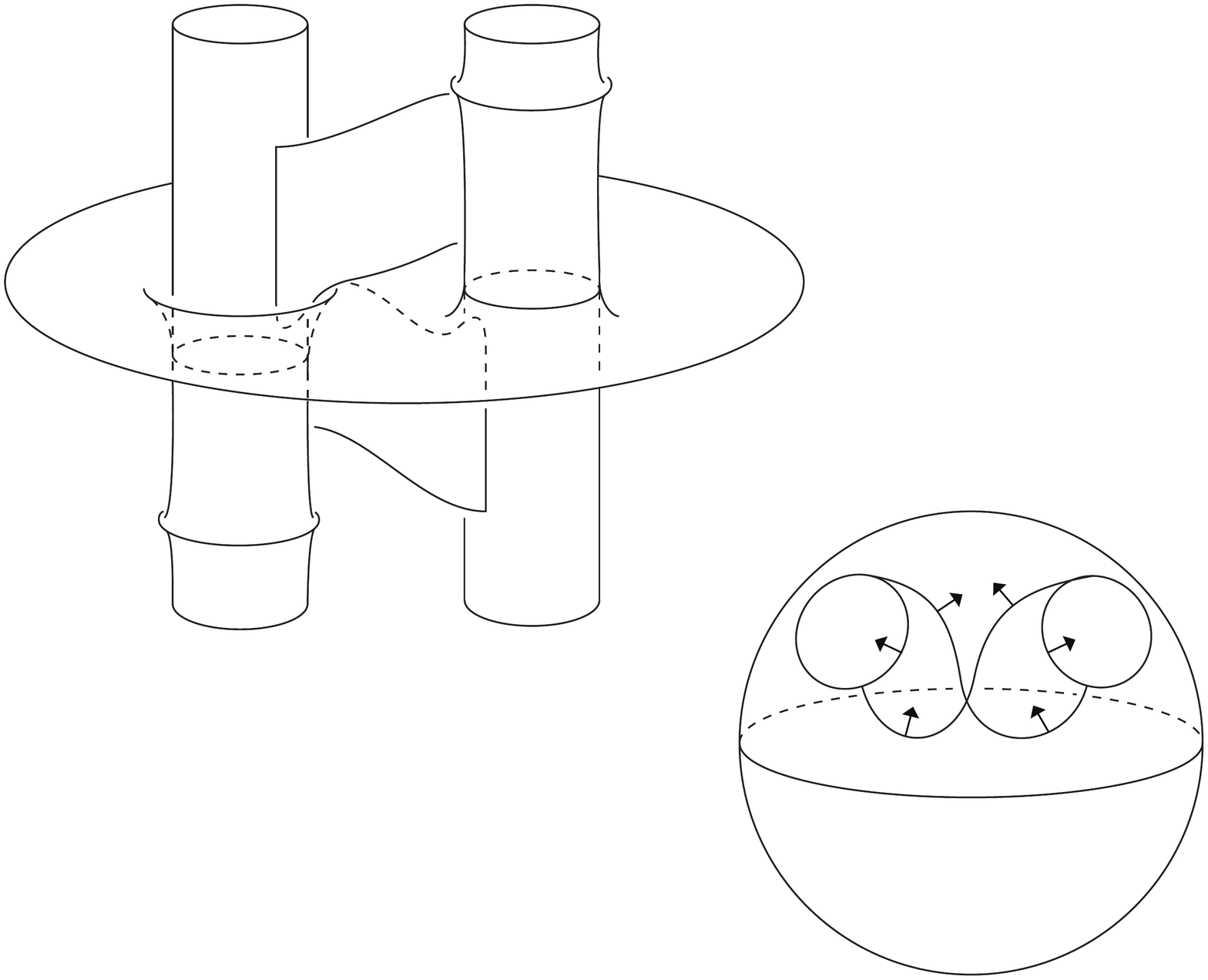}
\end{center}
\caption{The branched surface near $A$, also showing the co-orientations of $T$ near $ \mu_0(\alpha_1)$ and $\mu_0(\alpha_2)$. }\label{BnearA}
\end{figure}

It is straightforward to check that this choice of co-orientations on the sectors of $\Sigma$ determines a compatible smoothing of $\Sigma$ to a branched surface. Call this branched surface $B$. The smoothings restricted to $N(A)$ are shown in Figure~\ref{BnearA}. Those near $\mu_0(\alpha_1)$ and $\mu_0(\alpha_2)$ are shown in Figures~\ref{cuspintropos} and \ref{cuspintro}, and called \emph{Type C}, and those near  $\mu_1(\alpha_1)$ and $\mu_1(\alpha_2)$ are shown in 
Figure~\ref{posB}, and called \emph{Type B}.  
We note that the choice of co-orientations on the sectors of $\Sigma$ is motivated by the theory developed in \cite{DR2,DR3}, as is the terminology Type C (for cusp) and Type B.

\begin{figure}[ht]
\begin{center}
\labellist
\small
\pinlabel {\footnotesize $F$} at 50 550
\pinlabel {\footnotesize $T$} at 85 489
\pinlabel {\footnotesize $D$} at 135 518
\pinlabel {\footnotesize Triple point at $x$} at 535 430
\pinlabel {\footnotesize $F$} at 305 550
\pinlabel {\footnotesize $F$} at 292 506
\pinlabel {\footnotesize $F$} at 212 485
\pinlabel {\scriptsize $D$} at 225 526
\pinlabel {\footnotesize $T$} at 247 492
\pinlabel $\simeq$ at 180 515
\pinlabel $D$ at 557 535
\pinlabel $F$ at 490 505
\pinlabel {\scriptsize $F$} at 579 484
\pinlabel $T$ at 653 515
\pinlabel $T$ at 641 486
\pinlabel {\scriptsize $T$} at 509 465
\pinlabel {\scriptsize $\mp$} at 190 402
\pinlabel {\scriptsize $\pm$} at 145 357
\pinlabel {\scriptsize $\mp$} at 120 280
\pinlabel {\scriptsize $\pm$} at 75 260
\pinlabel {\footnotesize $x$} [tr] at 90 328
\pinlabel {\footnotesize $x$} [tr] at 88 296
\pinlabel {\footnotesize $y$} [bl] at 182 345
\pinlabel {\footnotesize $y$} [tl] at 172 328
\pinlabel {\scriptsize $\mp$} at 410 380
\pinlabel {\scriptsize $\pm$} at 375 365
\pinlabel {\scriptsize $\pm$} at 350 300
\pinlabel {\scriptsize $\mp$} at 380 280
\pinlabel {\footnotesize Branch} at 203 190
\pinlabel {\footnotesize curves} at 203 165
\pinlabel {\scriptsize $\mp$} at 340 170
\pinlabel {\scriptsize $\pm$} at 300 152
\pinlabel {\scriptsize $\pm$} at 265 81
\pinlabel {\scriptsize $\mp$} at 303 70
\pinlabel $F$ at 655 215
\pinlabel $T$ at 545 184
\pinlabel $D$ at 627 183
\pinlabel $F$ at 690 165
\pinlabel $T$ at 550 135
\pinlabel {\footnotesize Triple point at $y$} at 625 95
\endlabellist
\includegraphics[scale=.4]{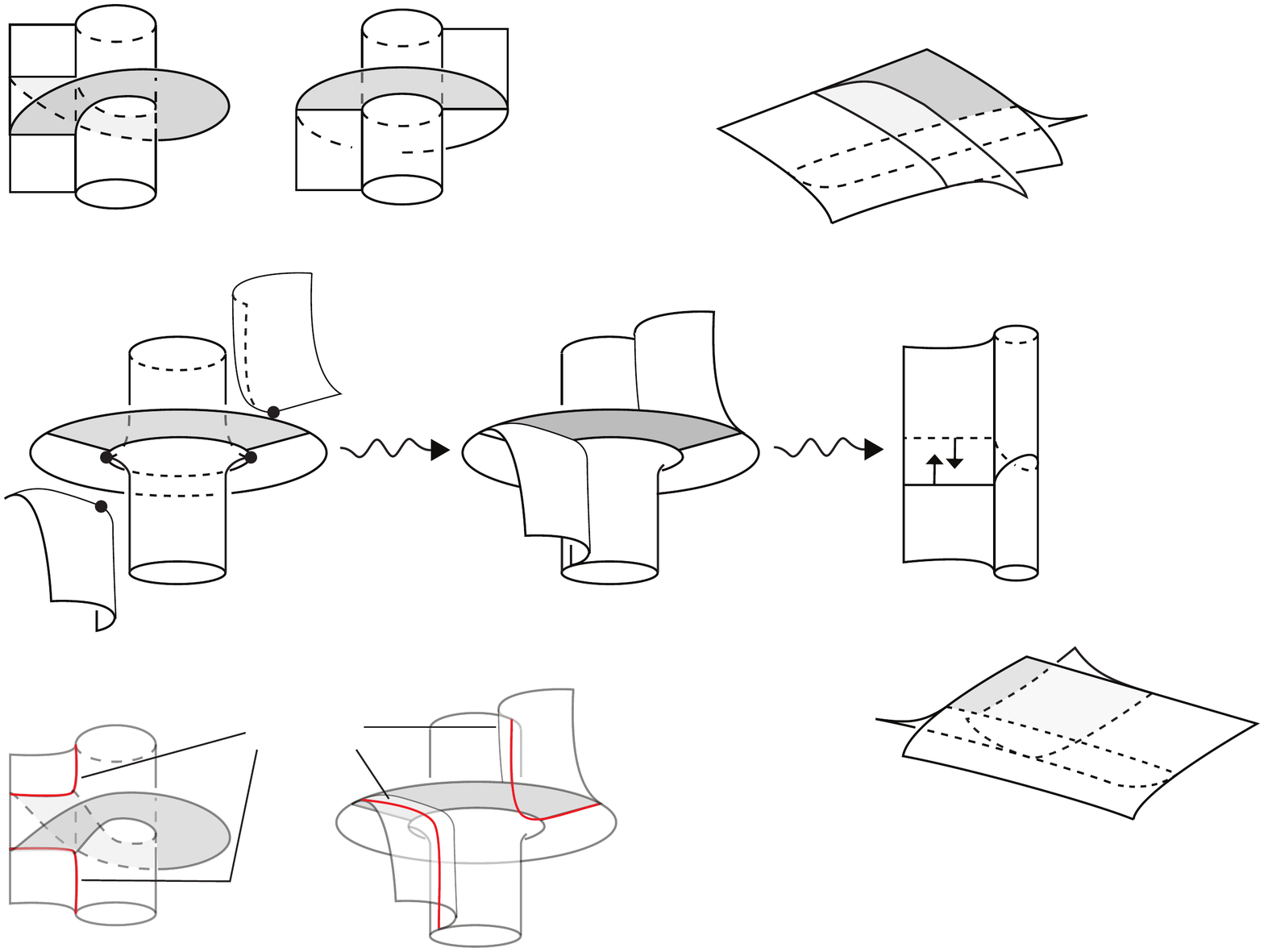}
\end{center}
\caption{$B$ in a neighbourhood of a positive Type C crossing: introducing a meridian cusp in the region containing $\kappa$.}
\label{cuspintropos}
\end{figure}

\begin{figure}[ht]
\labellist
\small
\pinlabel {\footnotesize $F$} at 50 550
\pinlabel {\footnotesize $T$} at 85 489
\pinlabel {\footnotesize $D$} at 137 522
\pinlabel {\footnotesize Triple point at $x$} at 540 430
\pinlabel $\simeq$ at 180 520
\pinlabel {\footnotesize $F$} at 210 550
\pinlabel {\scriptsize $F$} at 290 530
\pinlabel {\footnotesize $F$} at 300 488
\pinlabel {\scriptsize $D$} at 227 511
\pinlabel {\footnotesize $T$} at 247 545
\pinlabel $D$ at 557 470
\pinlabel $F$ at 480 500
\pinlabel {\scriptsize $F$} at 580 516
\pinlabel $T$ at 641 520
\pinlabel $T$ at 655 490
\pinlabel {\scriptsize $T$} at 502 539
\pinlabel {\scriptsize $\mp$} at 80 390
\pinlabel {\scriptsize $\pm$} at 120 370
\pinlabel {\scriptsize $\mp$} at 143 291
\pinlabel {\scriptsize $\pm$} at 190 250
\pinlabel {\footnotesize $x$} [Br] at 93 330
\pinlabel {\footnotesize $x$} [br] at 88 355
\pinlabel {\footnotesize $y$} [tl] at 182 302
\pinlabel {\footnotesize $y$} [Bl] at 172 330
\pinlabel {\scriptsize $\pm$} at 380 370
\pinlabel {\scriptsize $\mp$} at 350 355
\pinlabel {\scriptsize $\mp$} at 375 285
\pinlabel {\scriptsize $\pm$} at 410 270
\pinlabel {\scriptsize $\pm$} at 303 160
\pinlabel {\scriptsize $\mp$} at 265 150
\pinlabel {\scriptsize $\mp$} at 300 81
\pinlabel {\scriptsize $\pm$} at 340 70
\pinlabel {\footnotesize Branch} at 201 65
\pinlabel {\footnotesize curves} at 201 40
\pinlabel $F$ at 650 90
\pinlabel $T$ at 533 157
\pinlabel $D$ at 615 120
\pinlabel $F$ at 688 140
\pinlabel $T$ at 517 132
\pinlabel {\footnotesize Triple point at $y$} at 630 65
\endlabellist
\begin{center}
\includegraphics[scale=.4]{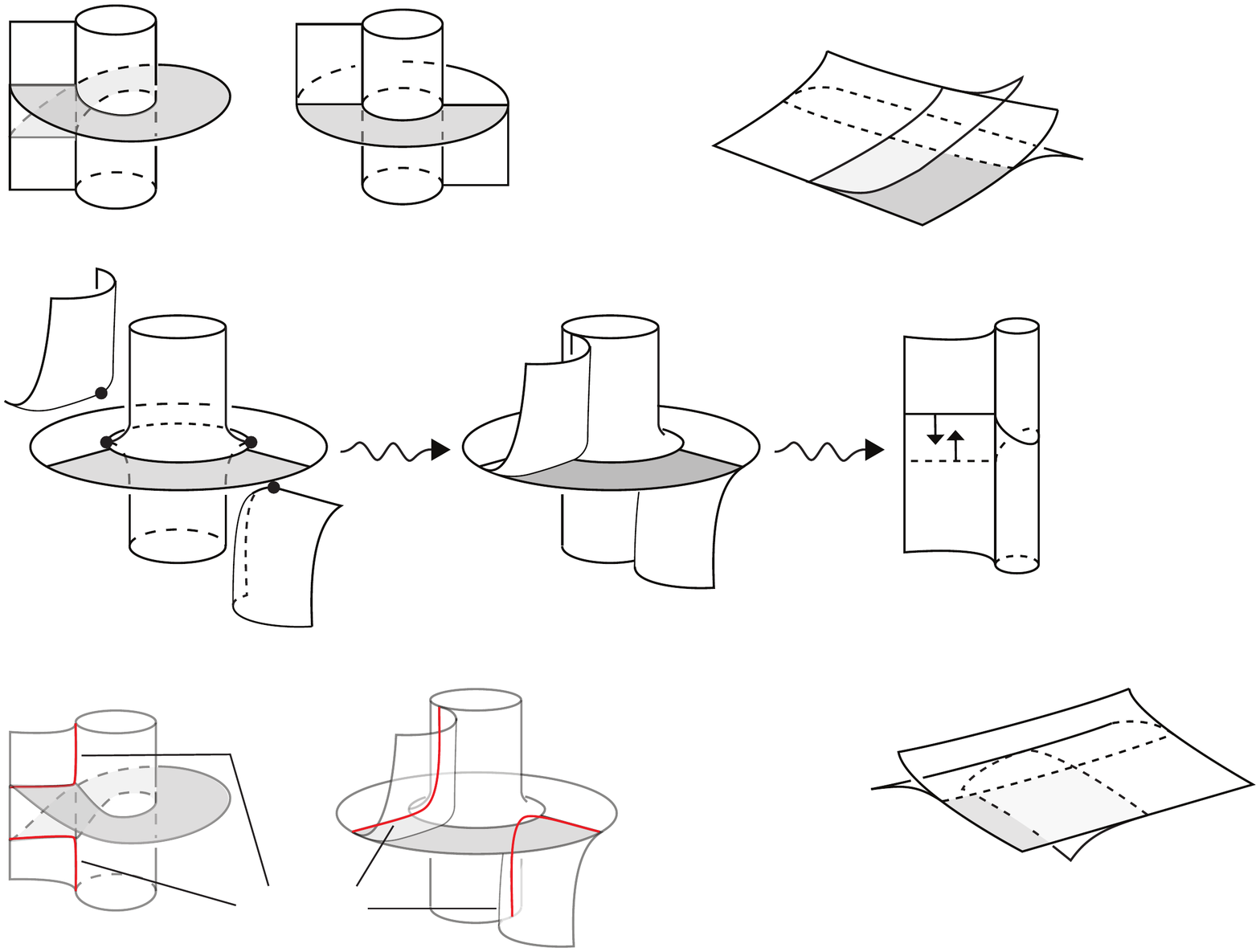}
\end{center}
\caption{$B$ in a neighbourhood of a negative Type C crossing: introducing a meridian cusp in the region containing $\kappa$.}
\label{cuspintro}
\end{figure}

\begin{figure}[ht]
\labellist
\small
\pinlabel {\tiny $D$} at 92 539
\pinlabel {\tiny $F$} at 145 518
\pinlabel {\tiny $F$} at 70 490
\pinlabel {\tiny $\mp$} at 114 567
\pinlabel {\tiny $\mp$} at 135 550
\pinlabel {\tiny $\pm$} at 162 528
\pinlabel {\tiny $\mp$} at 114 495
\pinlabel {\tiny $\mp$} at 85 493
\pinlabel {\tiny $F$} at 398 553
\pinlabel {\tiny $D$} at 310 527
\pinlabel {\tiny $F$} at 373 518
\pinlabel {\tiny $F$} at 305 490
\pinlabel {\tiny $\mp$} at 349 567
\pinlabel {\tiny $\mp$} at 326 530
\pinlabel {\tiny $\pm$} at 375 555
\pinlabel {\tiny $\mp$} at 349 495
\pinlabel {\tiny $\mp$} at 323 495
\pinlabel {$=$} at 430 518
\pinlabel {\tiny $\mp$} at 510 560
\pinlabel {\tiny $\mp$} at 486 530
\pinlabel {\tiny $\pm$} at 545 555
\pinlabel {\tiny $\mp$} at 510 505
\pinlabel {\tiny $\mp$} at 475 500
\pinlabel $F$ at 478 450
\pinlabel $D$ at 485 390
\pinlabel $T$ at 470 315
\pinlabel $x$ [l] at 258 396
\pinlabel $x$ [Bl] at 537 380
\pinlabel {Triple point} [Bl] at 590 408
\pinlabel {at $x$} [Bl] at 590 378
\pinlabel {\tiny $F$} at 164 277
\pinlabel {\tiny $D$} at 53 265
\pinlabel {\tiny $F$} at 135 240
\pinlabel {\tiny $F$} at 65 215
\pinlabel {\tiny $\pm$} at 70 270
\pinlabel {\tiny $\mp$} at 112 250
\pinlabel {\tiny $\mp$} at 150 245
\pinlabel {\tiny $\pm$} at 80 218
\pinlabel {\tiny $F$} at 400 275
\pinlabel {\tiny $D$} at 310 250
\pinlabel {\tiny $F$} at 372 237
\pinlabel {\tiny $F$} at 302 215
\pinlabel {\tiny $\mp$} at 350 278
\pinlabel {\tiny $\pm$} at 325 255
\pinlabel {\tiny $\mp$} at 387 275
\pinlabel {\tiny $\mp$} at 350 220
\pinlabel {\tiny $\pm$} at 317 218
\pinlabel {$=$} at 431 245
\pinlabel {\tiny $\mp$} at 505 282
\pinlabel {\tiny $\pm$} at 477 250
\pinlabel {\tiny $\mp$} at 553 275
\pinlabel {\tiny $\mp$} at 505 220
\pinlabel {\tiny $\pm$} at 472 218
\pinlabel $T$ at 485 173
\pinlabel $D$ at 485 90
\pinlabel $F$ at 460 32
\pinlabel $x$ [l] at 258 88
\pinlabel $x$ [Bl] at 537 90
\pinlabel {Triple point} [Bl] at 590 115
\pinlabel {at $x$} [Bl] at 590 85
\endlabellist
\begin{center}
\includegraphics[scale=.4]{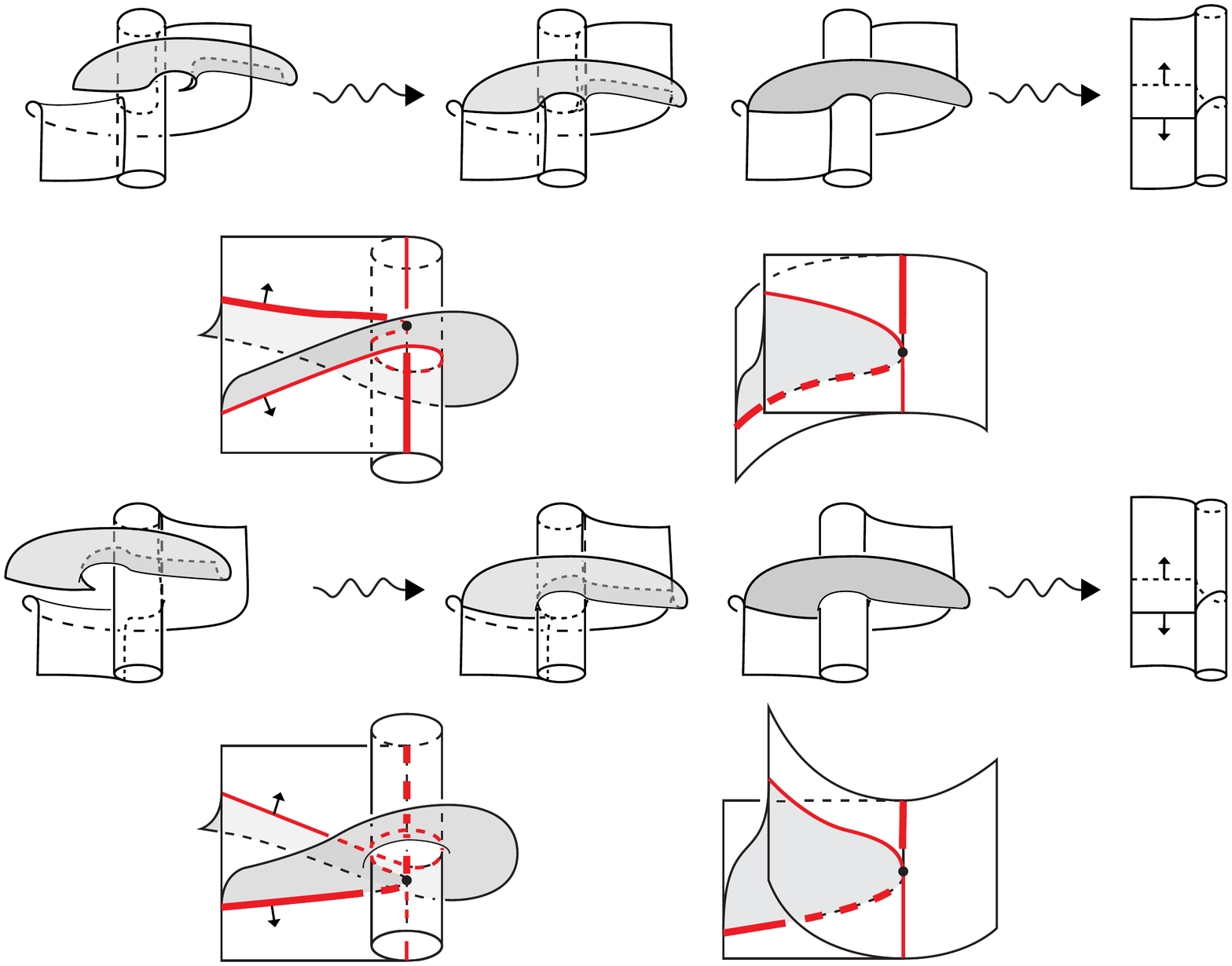}
\end{center}
\caption{$B$ in a neighbourhood of a positive Type B transition.}\label{posB}
\end{figure}

It is helpful for calculations to make note of the components of $\partial_vN(B)$ that result locally from each type of smoothing near a transition arc.  These are shown in  (red) boldface in Figure~\ref{sutures}.   

\begin{figure}[ht]
\labellist
\small
\pinlabel {Type A or C} [B] at 220 165
\pinlabel {Type B} [B] at 560 165
\endlabellist
\begin{center}
\includegraphics[scale=.3]{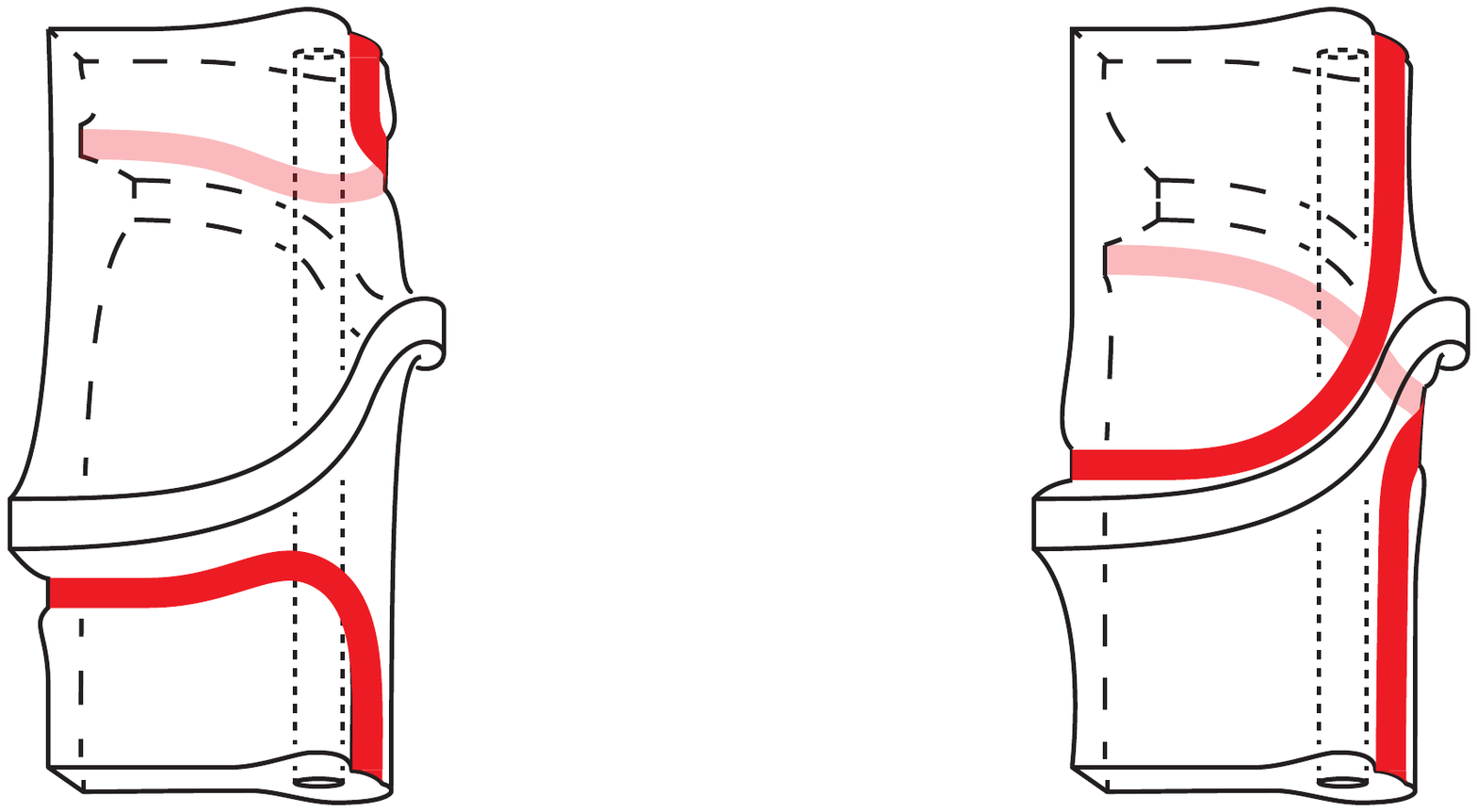}
\end{center}
\caption{$N(B)$, showing the sutures, $\partial_vN(B)$, at smoothings of Type A, B, and C.}  \label{sutures}
\end{figure}
 
\subsection{The three complementary regions of $B$} \label{s: complement of B}

For each $i$, let $g_i$ denote the genus of  $F_i$.      We now describe the components of the sutured manifold $\left(X_\kappa \setminus \Int N(B), \partial_v(N(B)\right)$, commonly referred to as the \emph{complementary regions of $B$}.   Clearly there are three, one of which contains $\partial X_\kappa$, and one lying in each $X_{\kappa_i}$.  Let $V_{1}$ and $V_{2}$ be the annuli of vertical boundary with cores $ \mu_0(\alpha_1)$ and $\mu_0(\alpha_2)$.  

\begin{prop} \label{Gabaicomplements} The sutured manifold  $(X_\kappa \setminus \mbox{int }N(B),\partial_v N(B))$  consists of the following three (sutured manifold) components: 
\begin{enumerate}
\item  $\left(\partial X_\kappa \times I, V_1 \cup V_2 \right)$, 
\item $(F_1'\times I,\partial F_1'\times I)$, and 
\item $(F_2'\times I,\partial F_2'\times I)$,
\end{enumerate}
where $F_i'=F_i|_{\alpha_i}$ is a surface with two boundary components and genus $g_i-1$. 
\end{prop}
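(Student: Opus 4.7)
My plan is to separate the problem into two steps: first, identify the 3-dimensional topology of the complementary pieces, which depends only on the underlying spine $\Sigma$; second, track how the cusp directions of the chosen smoothing distribute $\partial_v N(B)$ among these pieces. This separation is natural because smoothing a spine to a branched surface changes where the vertical boundary lies but does not change the 3-dimensional complement of its regular neighborhood.

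For the 3-dimensional topology, I would cut $X_\kappa$ successively along the constituent sectors of $\Sigma$. Since $T$ is parallel to $\partial X_\kappa$, cutting $X_\kappa$ along $T$ splits off the product region $\partial X_\kappa \times I$ bounded by $\partial X_\kappa$ and $T$, leaving an inner region $Y \cong X_\kappa$. Cutting $Y$ along the annulus $A \subset P$ separates it into copies of $X_{\kappa_1}$ and $X_{\kappa_2}$. What remains of $\Sigma$ inside $X_{\kappa_i}$ is $F_i \cup D_i$, and applying Lemma~\ref{BGabai} to the branched surface $B^G(\alpha_i)$ there shows that cutting $X_{\kappa_i}$ along $F_i \cup D_i$ yields the product $F_i' \times I$ with $F_i' = F_i|_{\alpha_i}$, a surface of genus $g_i - 1$ with two boundary components (using that $\alpha_i$ is non-separating). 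This produces exactly the three 3-manifolds claimed.

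For the sutures, I would analyze each branching curve locally and record which side the cusp faces. The Type C smoothings at $\mu_0(\alpha_1)$ and $\mu_0(\alpha_2)$ on $T$, by construction in Figures~\ref{cuspintropos} and~\ref{cuspintro}, produce cusps that face inward toward the region containing $\partial X_\kappa$; these are the two annular sutures $V_1$ and $V_2$ with cores $\mu_0(\alpha_1)$ and $\mu_0(\alpha_2)$ in the region $\partial X_\kappa \times I$. The other branching curves on $T$, coming from $A \cap T$, face away from $\partial X_\kappa$ per the co-orientations of Figure~\ref{SigmanearA} and Figure~\ref{BnearA}, so they contribute no suture to this region. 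Inside each $F_i' \times I$, the Type B smoothings at $\mu_1(\alpha_i)$, the smoothings along $\alpha_i$ inside $F_i$, and the smoothings along $A \cap F_i$ together recreate the annular sutures $\partial F_i' \times I$ that Lemma~\ref{BGabai} predicts for the $B^G$-type complement.

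I expect the main obstacle to be a careful bookkeeping of the cusps near the annulus $A$, where up to four sectors meet ($T$, $A$, and $F_1$ or $F_2$) and the compatible co-orientations of Figure~\ref{SigmanearA} must be checked at each triple point. Verifying that the local smoothings at these corners do not introduce unexpected vertical boundary components in the $\partial X_\kappa \times I$ region, and that the vertical boundary in $F_i' \times I$ assembles precisely into the two-annulus configuration $\partial F_i' \times I$ rather than a more complicated suture pattern, requires patient local analysis of each triple-point neighborhood in Figures~\ref{BnearA}, \ref{cuspintropos}, \ref{cuspintro}, and~\ref{posB}.
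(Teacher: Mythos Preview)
Your proposal is correct and follows essentially the same approach as the paper: both identify the outer region via the Type~C meridional cusps and reduce the two inner regions to Lemma~\ref{BGabai} by checking that the sutures of $Y_i$ match those of the complement of $B^G(\alpha_i)$. Your explicit two-step separation (underlying topology first, then suture bookkeeping) is a clean organizational device, and the obstacle you flag---that the suture cores near $\partial A$ wrap partway around $\partial A$ but remain disjoint from $\partial F_i$---is exactly the point the paper resolves with its reference to Figure~\ref{sutureswork}.
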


\begin{proof}   $T$ is parallel to $\partial X_\kappa$.    Moreover, each of the two Type C neighbourhoods of $B$ introduces a single meridian suture. Hence, the complementary region containing  $\partial X_\kappa$  is isomorphic to the sutured manifold described in (1).  

Set $B_1^G=\langle F_1,D_1\rangle$ and $B_2^G=\langle F_2, D_2\rangle$.  By Lemma~\ref{BGabai}, it suffices to show that the remaining  components complementary to $\Int N(B)$ are isomorphic as sutured manifolds to  the  closed complements of $B_1^G$ and $B_1^G$, respectively.   Let   $Y_i$ be the  component   that lies   in $X_{\kappa_i}$. Forgetting the sutured manifold  structure of $\partial Y_i$, the compact 3-manifold $Y_i$ is a genus $2g_i -  2$ handlebody, and hence is homeomorphic to $F_i'\times I$. It suffices, therefore, to prove that this homeomorphism can be chosen so that $\partial'_v Y_i$ is mapped to   $ \partial F_i\times I$.  
Away from   $\partial A$   and the  crossings  $D_i\cap T$, the core of $\partial'_v Y_i$ runs along $T$, parallel to $\partial F_i$. At the crossings, this core combines with the arcs $D_i\cap F_i$ (topologically) as it does in $B^G$; see  Figure~\ref{sutures}.

At   $\partial A$, this core wraps partway about $\partial A$, but disjointly from $\partial F_i$. Hence $(Y_i,\partial'_v Y_i)$ is isomorphic to $(F_i'\times I,\partial F_i'\times I).$ This is illustrated in Figure~\ref{sutureswork}.
\end{proof}

\begin{figure}[ht]
\labellist
\small
\pinlabel {Type C} at 145 570
\pinlabel {Type B} at 245 570
\pinlabel {Type C} at 515 570
\pinlabel {Type B} at 600 570
\pinlabel {\footnotesize $\phi(\alpha)$} [Br] at 120 405
\pinlabel {\footnotesize $\alpha$} [Br] at 183 405
\pinlabel {\footnotesize $\phi(\alpha)$} [Bl] at 201 405
\pinlabel {\footnotesize $\alpha$} [Bl] at 268 405
\pinlabel {\footnotesize $\phi(\alpha)$} [Br] at 483 405
\pinlabel {\footnotesize $\phi(\alpha)$} [Bl] at 520 405
\pinlabel {\footnotesize $\alpha$} [Br] at 610 405
\pinlabel {\footnotesize $\alpha$} [Bl] at 645 405
\endlabellist
\begin{center}
\includegraphics[scale=.4]{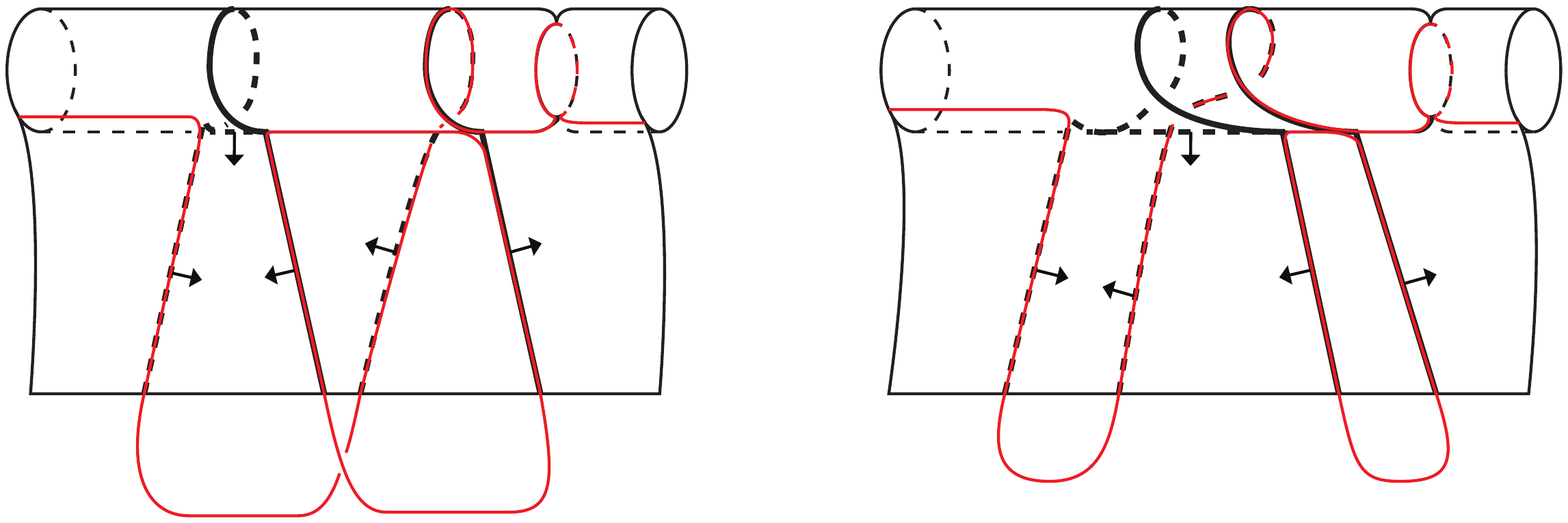}
\end{center}
\caption{The sutures of $B$  agree with those of $B^G$.}\label{sutureswork}
\end{figure}

\begin{cor} \label{surgerycomplements}Let $\widehat{M}$ denote a closed 3-manifold obtained by Dehn filling of slope $\frac{p}{q}$ along $\kappa$. 
The complement $(\widehat{M}\setminus \mbox{int }N(B),\partial_v N(B))$ consists of the following three components: 
\begin{enumerate}
\item  A solid torus whose meridian has minimal geometric intersection number $2|q|$ with   $\mu_0(\alpha_1) \cup \mu_0(\alpha_2)$,  
\item $(F_1'\times I,\partial F_1'\times I)$, and 
\item $(F_2'\times I,\partial F_2'\times I)$,
\end{enumerate}
where $F_i'$ is a surface with two boundary components and genus $g_i-1$.
\end{cor}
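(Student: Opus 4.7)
The plan is to deduce this from Proposition~\ref{Gabaicomplements} by observing that the Dehn filling only modifies the complementary region that meets $\partial X_\kappa$. Specifically, components (2) and (3) in Proposition~\ref{Gabaicomplements} are contained in the interiors of $X_{\kappa_1}$ and $X_{\kappa_2}$ respectively, so they lie in the interior of $X_\kappa$ and are disjoint from $\partial X_\kappa$. Hence Dehn filling $X_\kappa$ along a slope $p/q$ leaves these two components (and their sutures) unchanged, producing the sutured manifolds $(F_1'\times I, \partial F_1'\times I)$ and $(F_2'\times I, \partial F_2'\times I)$ exactly as listed in (2) and (3).

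It remains to analyze what happens to the first component $(\partial X_\kappa \times I, V_1\cup V_2)$ of Proposition~\ref{Gabaicomplements} after Dehn filling. First I would observe that this region, being a thickened torus, becomes a solid torus once the Dehn-filling solid torus is glued along its outer boundary $\partial X_\kappa$; the inner boundary (lying against $N(B)$) is the torus parallel to $T$ that forms part of $\partial N(B)$. Thus the first component of $(\widehat{M}\setminus\Int N(B), \partial_v N(B))$ is indeed a solid torus.

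Next I would compute the intersection number of the meridian of this solid torus with the sutures $V_1\cup V_2$. The sutures $V_1$ and $V_2$ are annuli whose cores are $\mu_0(\alpha_1)$ and $\mu_0(\alpha_2)$, which by Corollary~\ref{correct meridian} are each the standard meridian $\mu$ of $\kappa$ up to isotopy in $\partial X_\kappa$. Under the $p/q$ Dehn filling, the meridian of the attached solid torus has slope $p/q$, so it has minimal geometric intersection number $|q|$ with the standard meridian $\mu$. Pushed across the collar $\partial X_\kappa \times I$, the meridian of the resulting solid torus is isotopic to the filling slope curve, so it intersects the core of each $V_i$ geometrically $|q|$ times, giving $2|q|$ intersections with $V_1\cup V_2$ in total.

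The main (very mild) obstacle is verifying that the intersections can indeed be realized simultaneously at their minimum, so that the count is genuinely $2|q|$ rather than merely an upper bound. Since $\mu_0(\alpha_1)$ and $\mu_0(\alpha_2)$ are disjoint parallel copies of $\mu$ on $\partial X_\kappa$, a single curve of slope $p/q$ meets each of them transversely in exactly $|q|$ points, and these intersections are minimal because $\mu$ and the slope $p/q$ curve realize their minimal intersection $|q|$. Thus the total geometric intersection is $2|q|$ and is minimal, completing the proof.
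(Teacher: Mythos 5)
Your proof is correct and follows essentially the same approach as the paper's, which simply Dehn fills the boundary component and observes that the meridian of the resulting solid torus meets each $\mu_0(\alpha_i)$ minimally in $|\langle 1/0, p/q\rangle| = |q|$ points. You have supplied the (straightforward but implicit) justifications that the paper omits: that the two product components are disjoint from $\partial X_\kappa$ and hence unaffected, and that minimality is simultaneously achieved on both parallel copies of $\mu$.
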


\begin{proof}
Consider the complementary  component  that contains $\partial X_\kappa$. After Dehn filling $\partial X_\kappa$ with slope $p/q$, this  component transforms to a solid torus with meridian intersecting each of the curves $\mu_0(\alpha_1)$ and $\mu_0(\alpha_2)$ minimally in $|\langle  1/0,p/q\rangle|  = |q|$ points.
 \end{proof}

\subsection{Any leaf carried by $B$ is noncompact.} \label{s: no compact leaves}

\begin{prop} \label{prop:leavesnoncpt} Any surface carried by $B$ has nonempty intersection with every branch of $B$, and is noncompact. In particular, $B$ does not carry a torus.
\end{prop}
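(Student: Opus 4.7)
I would prove the proposition in two stages, matching its two assertions.

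In Stage 1, I would show that every carried surface has positive weight on every sector of $B$.  The branching locus $\Gamma$ is connected, and at every arc of $\Gamma^1$ the switch equation has the form $w_c = w_a + w_b$, with $c$ on the single-sheet side of the cusp and $a, b$ on the double-sheet side.  By nonnegativity of the weights, $w_c = 0$ forces $w_a = w_b = 0$.  A case-by-case inspection of each smoothing type---near $A$ (Figure~\ref{BnearA}), near Type~C cusps (Figures~\ref{cuspintropos} and~\ref{cuspintro}), and near Type~B cusps (Figure~\ref{posB})---verifies that each sector of $B$ lies on the single-sheet side of at least one incident cusp arc, so that zero weights propagate through the connected $\Gamma$ to every sector.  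Consequently, any nonzero weight function is strictly positive on every sector, and thus any carried surface meets every branch.

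In Stage 2, suppose for contradiction that $S$ is closed and carried by $B$, so its weight function $w$ is positive on every sector by Stage 1.  Proposition~\ref{Gabaicomplements} identifies the two meridional sutures $V_1, V_2$ (with cores $\mu_0(\alpha_1), \mu_0(\alpha_2)$) as the only components of $\partial_v N(B)$ facing the complementary region $\partial X_\kappa \times I$, both introduced by the Type~C smoothings.  I would choose a longitudinal loop $\gamma$ on $T$ disjoint from the transition arcs $\delta_1(\alpha_1), \delta_1(\alpha_2)$ and sum the jumps $\Delta w_T$ of the $T$-weight as $\gamma$ crosses each component of the critical locus on $T$.  The meridional cuts $\mu_0(\alpha_1), \mu_0(\alpha_2)$ contribute jumps $\pm w_{D_i}$ via the Type~C switch equations, and the two circles of $\partial A \cap T$ contribute $\pm w_A$ each.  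Since $\gamma$ is closed, these jumps sum to zero.  In the right-veering case, both Type~C cusps point into the same complementary region $\partial X_\kappa \times I$, so the $w_{D_i}$ contributions add (rather than cancel);  meanwhile, $\gamma$ crosses the two components of $\partial A \cap T$ with opposite orientations so those $w_A$ contributions cancel.  The resulting equation $w_{D_1} + w_{D_2} = 0$ contradicts positivity.

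Hence $B$ carries no closed surface:  every carried surface is noncompact, and in particular no torus is carried.  The main obstacle is the sign bookkeeping in Stage 2, which depends on the co-orientation choices of Section~\ref{s: firstsmoothing};  the right-veering hypothesis enters precisely to ensure that the cusp directions of the two Type~C smoothings agree relative to $T$, which in turn guarantees that the $w_{D_i}$ contributions do not cancel.  Verifying this agreement, as well as checking that the triple points on $T$ (where $F_1, F_2$ enter the switch equations) do not obstruct the jump computation along a longitudinal $\gamma$, is best done by inspecting the local pictures at each crossing directly.
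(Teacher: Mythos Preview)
Your Stage~1 is close in spirit to the paper's argument---both are cusp-direction chases---but the paper proceeds more concretely.  Rather than asserting that every sector lies on the single-sheet side of some cusp and invoking connectedness of $\Gamma$, the paper traces specific implications: $D_i$ in $B_L$ forces an $F$-sector in $B_L$ (cusp at $D_i\cap F$ points into $F$); absent any $D_i$, one $F$-sector forces all; a $T$-sector forces an $F$-sector (Type~C cusp points from $T$ into $F$); and $A$-sectors require $T$- or $F$-sectors.  The conclusion is only that $B_L$ contains some $F$-sector $F_0$ meeting $T$, which is all that is needed for the next step.  Your stronger claim may well be true, but ``single-sheet side of some cusp'' plus ``$\Gamma$ connected'' does not by itself give the propagation you want: zero on the single side pushes zero to the two double-side sectors, but to continue you need \emph{those} sectors in turn to sit on the single side of further cusps in a way that eventually reaches every sector, and this directed reachability is what you would actually have to check.

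Stage~2 has a genuine gap.  A longitudinal curve $\gamma$ on $T$ cannot be made disjoint from $\delta_1(\alpha_i)$ while also avoiding $\partial F_i\cap T$: the union $\delta_1(\alpha_i)\cup\delta'_1(\alpha_i)=\mu_1(\alpha_i)$ is a meridian, so any longitude meets it, and $\delta'_1(\alpha_i)\subset\partial F_i$.  Thus your jump-sum necessarily picks up Type~B contributions (from $\delta_1(\alpha_i)$) or $F_i$-contributions (from $\delta'_1(\alpha_i)$) that you have not accounted for, and there is no reason these do not cancel the Type~C terms.  More fundamentally, your argument uses nothing about $S$ beyond the switch equations on its (finite, nonnegative) weights; if it were correct it would show that $B$ admits no positive weight system whatsoever, and you have not argued for that.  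The paper sidesteps all of this with a direct geometric observation: once $B_L$ contains a sector $F_0$ of $F$ meeting $T$, the boundary arc $F_0\cap T$ carries an outward-pointing cusp, so the leaf $L$ is forced into $N(T)$ and there acquires a properly embedded ray $[0,\infty)$ winding meridionally---hence $L$ is noncompact.  This is where the Type~C meridional cusps are actually used, and no weight bookkeeping is required.
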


\begin{proof}   
Let $L$ be any nonempty surface carried by $B$, and let $B_L$ be the sub-branched surface of $B$ that fully carries $L$. If $B$ fully carries $L$, then $B_L=B$. In general, $B_L$ is a union of   sectors of $B$. 

We first observe that $B_L$ must contain a sector of $F$ that has nonempty intersection with $T$.  Suppose by way of contradiction that it does not.  Since the sink directions on $D_i \cap F$ point into $F$ for each $i$,  $B_L$ contains such a sector of $F$ whenever $B_L$ contains $D_i$;  hence we may assume that $B_L$ contains no $D_i$. But if $B_L$ does not contain $D_1$ or $D_2$, it can contain a sector of $F$ only when it contains every sector of $F$. Hence, we may assume that $B_L$ does not contain $D_1$, $D_2$, or any sector of $F$, and therefore does not contain any sector of $T$, since a cusp direction points from $T$ into $F$ at Type C smoothings. But it then follows that $B_L$ cannot contain a sector of $A$, and hence is empty, an impossibility.

Thus, $B_L$ contains a sector $F_0$ of $F$ that has nonempty intersection with $T$.  Since $F_0$ has an arc of boundary along $T$ with outward-pointing cusp direction,   $L \cap N(T)$ contains a proper embedding of a ray $[0,\infty)$ carried by a meridian of $T$;  hence, $L$ is not compact. 
\end{proof}

\subsection{$B$ fully carries a lamination $\mathcal L$} \label{s: fully carries} 

The branched surface $B$ might contain sink or half sink disks. However, using ideas from \cite{Rfib2}, it is straightforward to show that it can be split to a  branched surface that contains no sink or half sink disk.

\begin{notation}
Given a sequence of oriented arcs $\alpha_{i,1}, \alpha_{i,2}, \alpha_{i,3}, \ldots, \alpha_{i,n}$ embedded in $F_i$, let $D_{i,j} = \alpha_{i, j}\times \left[\frac{j-1}{n},\frac{j}{n}\right]$, oriented so that $\partial D_{i,j}$ contains $\alpha_{i,j} \times \left\{\frac{j-1}{n}\right\}$ as a positively oriented subarc.   Let $F_{i,j} = F_i \times \left\{\frac{j}{n}\right\}$.
\end{notation}

\begin{prop} \label{canbemadelaminar}
The branched surface $B$ can be split open  to a laminar branched surface $B'$ homeomorphic to the spine $$T \cup A \cup \left(\bigcup_{j=1}^{n_1} F_{1, j-1} \cup D_{1,j}  \right) \cup \left(\bigcup_{j=1}^{n_2} F_{2, j-1} \cup D_{2,j}  \right),$$
where $\phi(\alpha_i) = \alpha_{i,0}, \alpha_{i,1}, \alpha_{i,2}, \ldots, \alpha_{i,{n_i}} = \alpha_i$ is a $0$-end-effective sequence in $F_i$.

The complement of $B'$ has $2n+1$ components:
\begin{enumerate}
\item   $\left(\partial X_\kappa \times I, V'_1 \cup V'_2 \right)$, where $V'_1$ and $V'_2$ are disjoint meridional annuli,   
\item $n_1$ copies of $(F_1'\times I,\partial F_1'\times I)$, and 
\item $n_2$ copies of $(F_2'\times I,\partial F_2'\times I)$,
\end{enumerate} 
where each $F_i'$ is a surface with two boundary components and genus $g_i-1$.  
\end{prop}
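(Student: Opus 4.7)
The plan is to apply Theorem~\ref{Gcanbemadelaminar} separately inside each summand $X_{\kappa_i}$ and then verify that the resulting splittings assemble, through the annulus $A$, into a laminar branched surface. Since both $\kappa_1$ and $\kappa_2$ have right-veering monodromy in the present case, both transition arcs of each $\alpha_i$ are positive, so Theorem~\ref{Gcanbemadelaminar} produces a $0$-end-effective sequence $\phi(\alpha_i) = \alpha_{i,0}, \alpha_{i,1}, \ldots, \alpha_{i,n_i} = \alpha_i$ in $F_i$ for which the branched surface $B^G(\alpha_{i,1}, \ldots, \alpha_{i,n_i}) \subset X_{\kappa_i}$ is laminar and whose boundary train track contains $\mu_0(\alpha_i)$ as a subtrack. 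Because the endpoint-sliding isotopies producing these sparse sequences can be chosen in a collar of $\partial F_i$ disjoint from $\beta \cap \partial F_i$, and $\alpha_i$ was itself chosen disjoint from $\beta$, I may arrange every $\alpha_{i,j}$ to be disjoint from $\beta$, and therefore from $A$.

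With these sequences fixed, I define $B'$ by replacing each $D_i$ in the spine of $B$ with $\bigcup_{j=1}^{n_i}(F_{i,j-1} \cup D_{i,j})$, carrying the co-orientations from $B$ on $T$, $A$, $F_1$, $F_2$ and from $B^G(\alpha_{i,1}, \ldots, \alpha_{i,n_i})$ on the new product disks. The resulting $B'$ is a splitting of $B$ with the claimed spine form. To verify laminarity, I need to check three items: the interior sectors of $B'$ inside each $X_{\kappa_i}$ are already sectors of the laminar branched surface $B^G(\alpha_{i,1}, \ldots, \alpha_{i,n_i})$, and so are not sink disks; the $0$-end-effectiveness is exactly what guarantees that the Type C cusps at $\mu_0(\alpha_i)$ assemble correctly into the boundary train track and produce no half sink disks along $T$; and the four sectors of $A$ each carry an outward-pointing cusp direction, as one reads off Figure~\ref{BnearA}, so none are sink disks. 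Trivial bubbles and Reeb components are excluded by the same properties of each $B^G(\alpha_{i,1}, \ldots, \alpha_{i,n_i})$ together with Proposition~\ref{prop:leavesnoncpt}, which together with incompressibility of $\partial_h N(B')$ yields essentiality.

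Finally, I identify the complementary regions by iterating Lemma~\ref{BGabai}: each slab $F_i \times [\frac{j-1}{n_i}, \frac{j}{n_i}]$ cut along $D_{i,j}$ contributes a copy of $(F_i' \times I, \partial F_i' \times I)$ (using that, by construction, each $F_i|_{\alpha_{i,j}}$ is homeomorphic to $F_i' = F_i|_{\alpha_i}$), giving $n_1 + n_2$ such components in total; the component abutting $\partial X_\kappa$ is $(\partial X_\kappa \times I, V'_1 \cup V'_2)$ as in Proposition~\ref{Gabaicomplements}, with $V'_1$ and $V'_2$ the meridional sutures arising from the two Type C smoothings. The main obstacle I expect is the careful local bookkeeping near $A$, where branches from both sides of the summing sphere meet; I must verify both that the sectors there are not sink disks and that the sutures glue across $A$ without introducing additional complementary regions. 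This is handled exactly as in the proof of Proposition~\ref{Gabaicomplements}; compare Figure~\ref{sutureswork}.
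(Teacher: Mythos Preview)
Your proposal is correct and follows essentially the same approach as the paper: apply Theorem~\ref{Gcanbemadelaminar} in each summand to obtain $0$-end-effective sequences, assemble the resulting splittings across $A$, and verify laminarity using $\alpha$-sparseness together with Proposition~\ref{prop:leavesnoncpt}. You are in fact more explicit than the paper on two points---checking the sectors of $A$ and spelling out the complementary regions via Lemma~\ref{BGabai}---where the paper simply invokes that $B'$ is a splitting of $B$; one small terminological slip is that since $\Sigma$ (and hence $B'$) was isotoped into the interior of $X_\kappa$, there are no \emph{half} sink disks, so your check ``along $T$'' is really a check for sink disks among the $T$-sectors.
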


\begin{proof} Recall our assumption that all transition arcs are positive; hence Type B smoothings occur at $\delta_1$  and Type C smoothings occur at $\delta_0$ for each arc $\alpha_i$, $i=1,2$.

\begin{figure}[ht]
\labellist
\small
\pinlabel $F_{i,0}$ at 125 365
\pinlabel $F_{i,1}$ at 300 365
\pinlabel $D_{i,2}$ [tr] at 105 330
\pinlabel $D_{i,1}$ [tl] at 335 270
\pinlabel $+$ at 135 290
\pinlabel $-$ at 280 290
\pinlabel $\mp$ at 620 418
\pinlabel $\pm$ at 465 380
\pinlabel $\pm$ at 550 374
\pinlabel $\mp$ at 550 280
\pinlabel $\pm$ at 458 190
\pinlabel $\mp$ at 612 152
\pinlabel $D_{i,2}$ [l] at 685 243
\pinlabel $D_{i,1}$ [l] at 702 323
\endlabellist
\begin{center}
\includegraphics[scale=.4]{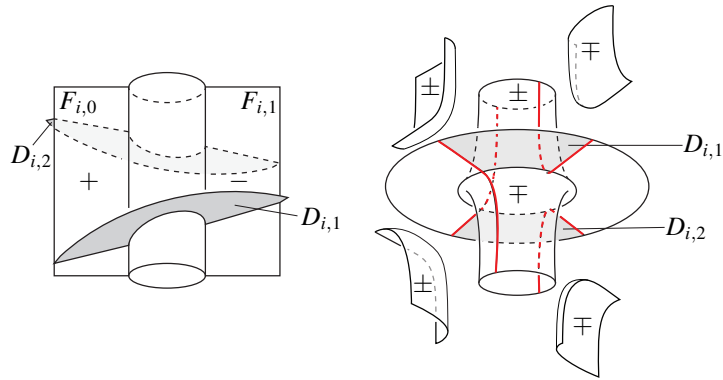}
\end{center}
\caption{Introducing a meridian cusp after splitting. Example with two copies of fiber $F_i$.}
\label{cuspintro2}
\end{figure}

Applying Theorem~\ref{Gcanbemadelaminar} to $B^G(\alpha_i)$, for each $i=1,2$,  there are $0$-end-effective  sequences $\phi(\alpha_i) = \alpha_{i,0}, \alpha_{i,1}, \alpha_{i,2}, \ldots, \alpha_{i,{n_i}} = \alpha_i$  such that  each branched surface $B^G(\alpha_{i,1}, \alpha_{i,2}, \ldots, \alpha_{i,{n_i}})$ is laminar, and each train track $\tau_i = B^G(\alpha_{i,1}, \alpha_{i,2}, \ldots, \alpha_{i,{n_i}})\cap \partial M$, contains the meridian as a subtrack   containing $\delta_0(\alpha_i)$. Denote each meridian subtrack by $\mu_{\tau_i}$.  Recall that each $F_{i,j}$ is oriented consistently with $F_i$, $i=1,2$.

Setting $$\Sigma'=T \cup A \cup \left(\bigcup_{j=1}^{n_1} F_{1, j-1} \cup D_{1,j}  \right) \cup \left(\bigcup_{j=1}^{n_2} F_{2, j-1} \cup D_{2,j}  \right),$$ we describe a smoothing  of $\Sigma'$ by fixing a compatible choice of co-orientations on the sectors of $\Sigma'$. Indeed, co-orientations have been fixed for all sectors except those lying in $T$. We define co-orientations in the sectors of $T$ by choosing co-orientations on the two annuli obtained by cutting $T$ open along 
$\mu_{\tau_1}\cup  \mu_{\tau_2}$, choosing these co-orientations to agree with the co-orientations chosen on $T\cap N(A)$. 

It is straightforward to check that this choice of co-orientations on the sectors of $\Sigma'$ determines a compatible smoothing of $\Sigma'$ to a branched surface. Call this branched surface $B'$.
Under this smoothing, the two meridians  $\mu_{\tau_1}\cup  \mu_{\tau_2}$ become meridian cusps in the complementary region that contains $\partial X_{\kappa}$. This is illustrated in Figure~\ref{cuspintro2}.    Let $V'_i$ be the annulus of vertical boundary with core $\mu_{\tau_i}$. 

Since $B$ does not carry a torus, and $B'$ is a splitting of $B$,  $B'$ does not carry a torus. Moreover, since the sequences $\alpha_{i,0}, \alpha_{i,1}, \alpha_{i,2}, \ldots, \alpha_{i,{n_i}}$ are $\alpha$-sparse, neither $B^G(\alpha_{i,1}, \alpha_{i,2}, \ldots, \alpha_{i,{n_i}})$ has a sink disk or half sink disk;  thus, $B'$ has no sink disk or half sink disk.  So $B'$ is laminar.
\end{proof}

\begin{cor}\label{thelaminationL}
$B$ fully carries a lamination.
\end{cor}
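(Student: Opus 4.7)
The plan is to combine Proposition~\ref{canbemadelaminar} with Li's theorem. By Proposition~\ref{canbemadelaminar}, the branched surface $B$ splits open to a laminar branched surface $B'$, whose sink-disk-free and trivial-bubble-free status is verified there. Li's theorem (stated immediately after Definition~\ref{defn:Li}) then asserts that any laminar branched surface in a compact orientable 3-manifold fully carries an essential lamination; applied to $B'$, this produces an essential lamination $\mathcal{L}$ fully carried by $B'$.

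Next, I would invoke the standard fact that a splitting is a local modification preserving the collection of fully carried laminations: one can identify $N(B')$ with a subset of $N(B)$ in which each $I$-fiber of $N(B)$ either coincides with or is a union (along the new vertical boundary created by the splitting) of $I$-fibers of $N(B')$. Since $\mathcal{L}$ meets every $I$-fiber of $N(B')$ transversely and nontrivially, it therefore meets every $I$-fiber of $N(B)$ transversely and nontrivially, so $\mathcal{L}$ is fully carried by $B$ as well.

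There is no real obstacle here; the corollary is essentially a one-line consequence once Proposition~\ref{canbemadelaminar} is in hand. The substantive work has already been carried out in that proposition, which in turn relies on Theorem~\ref{Gcanbemadelaminar} and the constructions of \cite{Rfib1, Rfib2}. If anything needs to be emphasized in the written proof, it is the verification that the splittings described in Proposition~\ref{canbemadelaminar}, obtained by inserting parallel copies of $F_i$ between consecutive product disks $D_{i,j}$, are of the standard type that preserves the $I$-bundle structure; but this is immediate from the construction.
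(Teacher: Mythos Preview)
Your proposal is correct and follows essentially the same approach as the paper: apply Li's theorem to the laminar splitting $B'$ furnished by Proposition~\ref{canbemadelaminar}, then observe that any lamination fully carried by a splitting of $B$ is fully carried by $B$. The paper's proof is the same two-sentence argument, without the additional justification you give for the splitting step.
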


\begin{proof}
The branched surface $B'$ described in Proposition~\ref{canbemadelaminar} is laminar, and hence  fully carries a lamination $\mathcal L$ \cite{Li0}. Since $B'$ is obtained by splitting $B$, $\mathcal L$ is also  fully carried by $B$.
\end{proof}

\subsection{$\mathcal L$ extends to co-oriented taut foliations that strongly realize all boundary slopes} \label{s: extends to foliation}

 \begin{prop} \label{strongdetection}
 For each slope $\gamma$ (not necessarily rational), the lamination $\mathcal L$ extends to a   co-oriented taut foliation  $\mathcal F_{\gamma}$ that strongly realizes $\gamma$. Each $\mathcal F_{\gamma}$ has a unique minimal set, fully carried by $B$.  
 \end{prop}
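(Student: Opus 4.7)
The plan is to extend the lamination $\mathcal L$ to a foliation by filling each complementary region of the laminar splitting $B'$ from Proposition~\ref{canbemadelaminar}. The complementary regions come in two types: the $n_1 + n_2$ product regions $(F_i'\times I,\partial F_i'\times I)$, $i = 1,2$, and the single boundary region $(\partial X_\kappa \times I, V_1' \cup V_2')$. I would fill each product region by the product foliation $\{F_i'\times\{t\}\}_{t\in I}$; since the horizontal boundary of each such region is already a copy of $F_i'$ sitting as a leaf of $\mathcal L$, the product foliation glues seamlessly, and co-orientations match those inherited from $B'$.

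The crucial step is the extension across the boundary region, which I would parametrize as $T^2 \times I$ with $T^2 \times \{1\} = \partial X_\kappa$ and with the sutures $V_1', V_2'$ sitting on $T^2 \times \{0\}$ as disjoint meridional annuli. The horizontal boundary on $T^2 \times \{0\}$ then consists of two further meridional annuli foliated by leaves of $\mathcal L \cap T$ (copies of the distinguished meridian $\mu$). Given a non-meridional slope $\gamma$, I would construct a co-oriented foliation of $T^2 \times I$ that is transverse to $T^2 \times \{1\}$ in curves of slope $\gamma$, tangent to $T^2 \times \{0\}$ along its horizontal boundary, and has the sutures $V_1', V_2'$ as vertical boundary. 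This is the classical spiral/turbulization model in a thickened torus: start with the straight-through foliation by curves of slope $\gamma$ and insert two spiralling components, one beside each meridional horizontal annulus, so that leaves spiral down and accumulate onto the meridional leaves of $\mathcal L$. Such a spiralling is possible precisely because $\gamma \ne \mu$, as the difference between the two boundary slopes drives the spiral. The meridional case is excluded by the Remark following the definition of strong realization, since no co-oriented taut foliation in $X_\kappa$ strongly realizes $\mu$.

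For tautness and co-orientation, the co-orientations are built in at every stage from those on $B'$. Tautness follows because every leaf of $\mathcal L$ is noncompact by Proposition~\ref{prop:leavesnoncpt}, and every leaf of $\mathcal F_\gamma$ outside $\mathcal L$ either lies in a product complementary region (hence has both its boundary leaves in $\mathcal L$) or spirals down onto $\mathcal L$ in the boundary region; in either case there is a transversal through every point meeting a noncompact leaf of $\mathcal L$. Any minimal set of $\mathcal F_\gamma$ must therefore lie in $\mathcal L$, and uniqueness of the minimal set follows from the connectedness of $B$: any proper saturated sublamination of $\mathcal L$ would be fully carried by a proper sub-branched surface of $B$, yet by the argument of Proposition~\ref{prop:leavesnoncpt} any fully carried lamination must reach both sectors of $F$ meeting $T$, forcing the sub-branched surface to be all of $B$. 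The main obstacle I anticipate is the explicit verification that the spiral construction on $T^2 \times I$ matches the combinatorics of $N(B') \cap (\partial X_\kappa \times I)$ simultaneously for all non-meridional slopes, and in particular that no hidden Reeb component is introduced near the sutures; this will be handled by choosing the direction of spiralling in agreement with the co-orientation that $B'$ already induces on the two meridional annuli of horizontal boundary.
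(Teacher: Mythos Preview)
Your proposal is correct and follows the same strategy as the paper: fill the product complementary regions by products, fill the boundary region $Y_\partial \cong T^2 \times I$ so as to realize the slope $\gamma$ on $\partial X_\kappa$, and invoke Proposition~\ref{prop:leavesnoncpt} for tautness and uniqueness of the minimal set. The only difference is in how the boundary region is handled: rather than appealing directly to a spiralling model in $T^2\times I$, the paper enlarges the spine by an annulus $A_\gamma$ (for rational $\gamma$) or, in general, by $A_\lambda$ together with a product disk $D_\mu$, so that $Y_\partial$ itself decomposes into product sutured pieces whose product leaves are exactly the spiralling leaves you describe (the ``stacking chairs'' construction); this keeps the argument inside the branched-surface framework and makes the realized slope visible as a weight on an explicit boundary train track, which in particular disposes of your anticipated obstacle about matching the combinatorics near the sutures.
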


\begin{proof}  
The complementary region of $B$ that is not a product (as a sutured manifold) is the one containing   $\partial X_\kappa$:  $\left(\partial X_\kappa \times I, V'_1 \cup V'_2 \right)$, where $V'_1$ and $V'_2$ are disjoint meridional annuli. Denote this region by $Y_\partial$. We will now show that for  every nontrivial slope $\gamma$ (not necessarily rational), this region can be filled in by noncompact leaves that meet $\partial X_{\kappa}$ in parallel leaves of slope $\gamma$.

When $\gamma$ is rational, this region contains a properly embedded annulus $A_{\gamma}= \gamma\times I$. When $\gamma$ is not the meridian, any choice of co-orientation of $A_{\gamma}$  describes a smoothing of $\Sigma\cup A_{\gamma}$ to a  branched surface in $X_\kappa$ whose complementary regions are all products (as sutured manifolds). 

In general (when $\gamma$ is either rational, but not the longitude $\lambda$, or irrational), proceed instead as follows. Consider the essential annulus   $A_{\lambda}=\lambda\times I$.  Again, any choice of co-orientation of $A_{\lambda}$  describes a smoothing of $\Sigma_{\lambda}=\Sigma\cup A_{\lambda}$ to a  branched surface in $X_\kappa$ whose complementary regions are all products.  In particular, the complementary region $Y_\partial |_{A_{\lambda}}$ is a solid torus with two longitudinal sutures (one of which is $\partial M \setminus \lambda$).  Let $D_{\mu}$ be the product disk for this region, isotoped so that the essential arcs $D_{\mu}\cap A_{\lambda}$ are disjoint. The two distinct choices of orientation on $D_{\mu}$ give rise to two smoothings of $\Sigma_{\lambda}\cup D_{\mu}$; call the resulting branched surfaces $B_1$ and $B_2$. The isotopy representative of $D_{\mu}$ can be chosen so that the train tracks  $B_1\cap \partial X_{\kappa}$ and $B_2\cap \partial X_{\kappa}$ together fully carry all nontrivial, nonlongitudinal boundary slopes. The associated measures on these train tracks describe measured laminations that are fully carried by the sub-branched surfaces (not properly embedded) with spine $A_{\lambda}\cup D_{\mu}$. See Figure \ref{figure: realizes all slopes}.  (Alternatively, the branched surfaces $B_1$ and $B_2$ are laminar, and hence there exist co-oriented laminations fully carried by $B_1$ or $B_2$ that strongly realize any nontrivial, nonlongitudinal slope $\gamma$ \cite{Li}. The proof of the main result of \cite{Li} reveals that these foliations can be chosen to include $\mathcal L$ as a sublamination.) This argument can of course be repeated replacing $\lambda$ with any nontrivial rational slope.

\begin{figure}[ht]
\labellist
\small
\pinlabel {Branch curves} at 190 595
\pinlabel $\partial X_\kappa$ [Bl] at 25 545
\pinlabel ! at 88 548
\pinlabel ! at 285 548
\pinlabel {$Y_\partial$ (Identify annuli marked ``!'')} at 190 450
\pinlabel ! at 398 548
\pinlabel ! at 596 548
\pinlabel {Branch curve} at 460 448
\pinlabel {Half-disk $\times I$} [Bl] at 572 432
\pinlabel {product sutured} [Bl] at 572 410
\pinlabel {manifold component} [Bl] at 572 388
\pinlabel $D_\mu$ at 545 398
\pinlabel {Branch curve} at 190 255
\pinlabel $A_\lambda$ at 302 260
\pinlabel {(Cut open on $D_\mu$ and $A_\lambda$)} at 535 190
\pinlabel $\partial M$ at 170 95
\pinlabel $1$ at 230 100
\pinlabel $\mu$ at 283 100
\pinlabel {\scriptsize $1+x$} at 350 75
\pinlabel $\lambda$ at 413 125
\pinlabel $x$ at 396 75
\pinlabel {Slope = $x$ (in terms of $(\lambda, \mu)$)} at 320 22
\endlabellist
\begin{center}
\includegraphics[scale=.5]{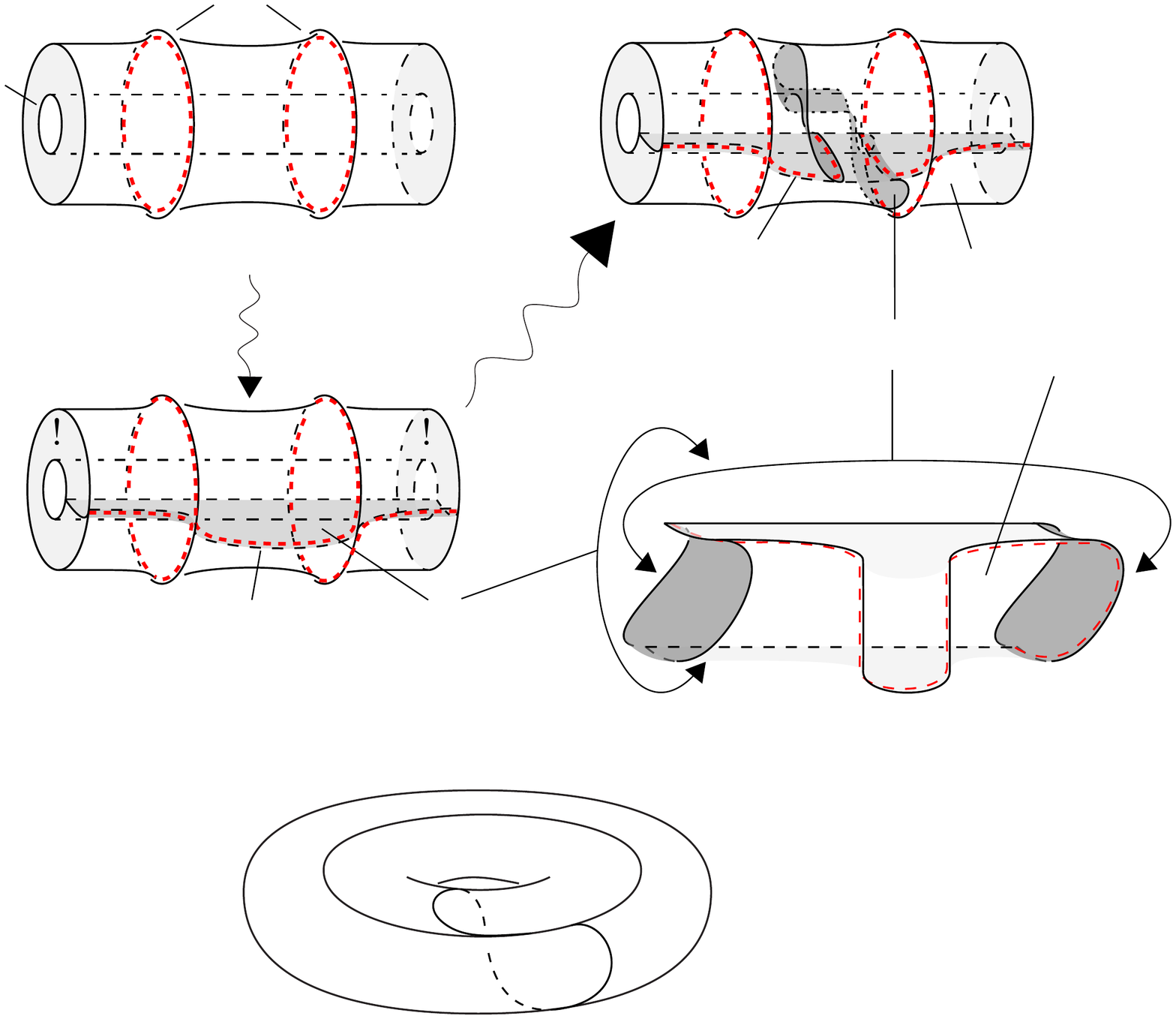}
\end{center}
\caption{Filling $Y_\partial$ to fully realize all boundary slopes.}  \label{figure: realizes all slopes}
\end{figure}

 Filling in the product complementary regions of the resulting lamination with parallel copies of the boundary leaves yields a co-oriented   foliation $\mathcal F_{\gamma}$ that strongly realizes $\gamma$. Since  $\mathcal F_{\gamma}$  has no compact leaves, it is necessarily taut. Since any leaf carried by $B$ has nonempty intersection with every branch of $B$, $\mathcal F_{\gamma}$ has exactly one minimal set. When the surgery coefficient is rational but not an integer, the minimal set of $\mathcal F_{\gamma}$ remains genuine after Dehn filling by slope $\gamma$. 
\end{proof}

This extension of $\mathcal L$ to the family of co-oriented taut foliations $\mathcal F_{\gamma}$ (and $\widehat{\mathcal F}_{\gamma}$) is an extension of the well known ``stacking chair" construction (see, for example, Example~1.1.i in \cite{Gabproblems}). An alternate approach to moving from the lamination $\mathcal L$ to co-oriented taut foliations  in $\widehat{X}_{\gamma}$, for $\gamma$ rational, can be found as Operations~2.3.2 and 2.4.4 in \cite{Gasusp}.

 We note, for the reader interested in understanding all co-oriented taut foliations in the complement of $\kappa$, that there are multiple distinct choices of compatible co-orientations  on $\Sigma$ leading to branched surfaces that fully carry taut foliations. 
 
\section{Additional constructions when the monodromy of $\kappa$ is  neither right- nor left-veering.}\label{Remaining}

Recall that if the monodromy of $\kappa \in S^3$ is neither right- nor left-veering, Theorem~\ref{oldresult} guarantees that any nontrivial slope is strongly realized by some co-oriented taut foliation.  We now introduce several new constructions  of co-oriented taut foliations that give the same result, most of which differ from the  foliations of \cite{Rfib1,Rfib2} in that they  have genuine minimal set. 
 
We note in passing that if the monodromy of $\kappa$ is neither right- nor left-veering, then  it has fractional Dehn twist coefficient zero \cite{HKM}, or, equivalently, Gabai degeneracy $n\cdot \frac{1}{0}$ for some $n\ge 1$ \cite{KR}.

\begin{lemma}
A connected sum of fibered knots in $S^3$ has right-veering (respectively, left-veering) monodromy if and only if each of its components has right-veering (respectively, left-veering) monodromy.
\end{lemma}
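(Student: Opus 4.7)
The plan is to exploit the compatibility of the monodromy $\phi$ of $\kappa_1\#\kappa_2$ with the decomposition induced by a summing sphere. As in Section~\ref{s: the spine} and \cite{GabMurasugi}, I choose the summing sphere $P$ and an isotopy representative of $\phi$ so that $\phi|_{F_i}=\phi_i$ for each $i=1,2$ and so that $\phi$ fixes the decomposing arc $\beta=A\cap F$ pointwise. Because reversing the orientation of $F$ interchanges right- and left-veering, it suffices to prove the right-veering statement. I will use the characterization of \cite{HKM1}, phrased pointwise: $\phi$ is right-veering if and only if, for every $p\in\partial F$ and every essential properly embedded arc $\alpha$ with one endpoint at $p$, the arc $\phi(\alpha)$ lies to the right of $\alpha$ at $p$ (in efficient position), and this right/left comparison depends only on the germs of $\alpha$ and $\phi(\alpha)$ at $p$.

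For the ``only if'' direction, I take any essential oriented arc $\alpha_i$ in $F_i$ with endpoints on $\partial F_i$. A small isotopy places $\partial\alpha_i\subset\partial F\cap\partial F_i$, so $\alpha_i$ is properly embedded in $F$; moreover it remains essential there, since $F_i$ has positive genus (as $\kappa_i$ is nontrivial). Because $\phi(\alpha_i)=\phi_i(\alpha_i)$ and the orientations on $F_i$ and $F$ agree, right-veering of $\phi$ applied to $\alpha_i$ at each endpoint translates directly to right-veering of $\phi_i$ there.

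For the ``if'' direction, I fix $p\in\partial F$ and an essential oriented arc $\alpha$ in $F$ with one endpoint at $p$; without loss of generality, $p\in\partial F_1$, so near $p$ the arc $\alpha$ enters $F_1$. Let $[\alpha]_p$ denote the germ of $\alpha$ at $p$ and extend it to some essential properly embedded arc $\tilde\alpha$ in $F_1$; such an extension exists because $F_1$ has positive genus. Right-veering of $\phi_1$ applied to $\tilde\alpha$ at $p$ shows that the germ of $\phi_1(\tilde\alpha)$ at $p$ lies to the right of $[\alpha]_p$. Since $\phi|_{F_1}=\phi_1$ and the germs at $p$ of $\alpha$ and $\tilde\alpha$ coincide, the germ of $\phi(\alpha)$ at $p$ equals the germ of $\phi_1(\tilde\alpha)$ at $p$; hence $\phi(\alpha)$ is to the right of $\alpha$ at $p$. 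Running over all endpoints of all essential arcs yields right-veering of $\phi$.

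The main technical point, which I expect to be the primary (though minor) obstacle, is the assertion that ``to the right at $p$'' depends only on the germs of the arcs at $p$. This should follow from the observation that efficient position near $p$ is achieved by bigon-removal and any bigon having $p$ on its boundary is forced to lie in a neighborhood of $p$ determined by the germ; hence the comparison can be carried out inside $F_1$, where the germs of $\alpha,\phi(\alpha)$ agree with those of $\tilde\alpha,\phi_1(\tilde\alpha)$.
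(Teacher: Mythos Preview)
Your ``only if'' direction is correct: arcs in $F_i$ remain essential in $F$, and since $F_i\hookrightarrow F$ is $\pi_1$-injective, efficient position of two arcs lying entirely in $F_i$ is the same whether computed in $F_i$ or in $F$. The ``if'' direction, however, has a genuine gap. Your claim that ``to the right at $p$'' depends only on the germs of the two arcs is false: the comparison is made \emph{after} putting the arcs into efficient (bigon-free) position, and whether a bigon with vertex $p$ exists is a global question. Concretely, lift $\alpha$ and $\phi(\alpha)$ to the universal cover $\widetilde F$ from a common lift $\tilde p$ of $p$; ``to the right at $p$'' is encoded by the relative position of their \emph{other} endpoints on $\partial\widetilde F$, and this is certainly not determined by the germs at $\tilde p$. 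Your arc $\tilde\alpha$ agrees with $\alpha$ only in an arbitrarily small neighbourhood of $p$, so the comparison of $\tilde\alpha$ with $\phi_1(\tilde\alpha)$ in $F_1$ says nothing a priori about the comparison of $\alpha$ with $\phi(\alpha)$ in $F$. Your bigon justification fails for the same reason: a bigon between $\alpha$ and $\phi(\alpha)$ with one vertex at $p$ can travel through $F_2$ and be arbitrarily large.

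A repair is to replace the germ by the \emph{first segment} of $\alpha$ in $F_1$: put $\alpha$ in efficient position with the separating arc $\beta$ and take the initial component $\alpha_1$ of $\alpha\cap F_1$, an essential arc in $F_1$ with one endpoint at $p$. Since $\phi$ preserves $\beta$, one can hope to relate efficient position of $\alpha,\phi(\alpha)$ in $F$ near $p$ to that of $\alpha_1,\phi_1(\alpha_1)$ in $F_1$; but carrying this through still requires a real argument comparing bigons across the cut, and is essentially the ``analysis of product disks'' the paper alludes to. The paper itself does not write this out, instead invoking Corollary~1.4 of \cite{GabMurasugi2}.
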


\begin{proof}
By induction, it suffices to consider the case of two nontrivial summands. The result follows immediately from Corollary~1.4 of \cite{GabMurasugi2}, or, more directly, from an analysis of product disks.
\end{proof}

\begin{cor}
Suppose $\kappa=\kappa_1\#\kappa_2$ is a fibered knot in $S^3$.  If the monodromy is neither right- nor left-veering, then one of the following must be true:
\begin{enumerate}
\item at least one of $\kappa_1$ or $\kappa_2$ has monodromy that is neither right- nor left-veering, or
\item one of $\kappa_1$ or $\kappa_2$ has monodromy that is  right-veering, and the other summand is  left-veering. \qed
\end{enumerate}
\end{cor}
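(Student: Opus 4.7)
The plan is to argue by contrapositive, reducing directly to the preceding lemma. Suppose neither of the two conclusions (1) nor (2) holds. Then the negation of (1) tells us that each of $\kappa_1$ and $\kappa_2$ has monodromy that \emph{is} either right-veering or left-veering (i.e.\ neither summand lies in the ``indeterminate'' case). The negation of (2) tells us that we are not in the mixed situation where one summand is right-veering and the other is left-veering. Enumerating the remaining possibilities for the pair of veering types, we are left with exactly two cases: both $\kappa_1$ and $\kappa_2$ are right-veering, or both $\kappa_1$ and $\kappa_2$ are left-veering.

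In either of these two cases, the preceding lemma applies directly: if both summands are right-veering, then $\kappa_1 \# \kappa_2$ has right-veering monodromy, and if both summands are left-veering, then $\kappa_1 \# \kappa_2$ has left-veering monodromy. Either outcome contradicts the hypothesis that the monodromy of $\kappa$ is neither right- nor left-veering. So the assumption that neither (1) nor (2) holds is untenable, which gives the corollary.

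I do not anticipate any real obstacle here; the corollary is essentially a logical bookkeeping step on top of the preceding lemma. The only mild care needed is in correctly enumerating the combinatorial cases for the veering types of the two summands and in ensuring that the ``neither right- nor left-veering'' case is treated as a genuine third option (not merely the negation of right-veering), so that the negation of (1) genuinely places each summand into the right-veering/left-veering dichotomy that makes the lemma applicable.
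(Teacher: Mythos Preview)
Your proposal is correct and matches the paper's approach: the paper gives no proof at all (just a \qed), treating the corollary as an immediate logical consequence of the preceding lemma, which is exactly what your contrapositive case analysis makes explicit.
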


We proceed as in the right-veering case, by first constructing a spine, and then describing a smoothing by fixing a compatible choice of co-orientations on the branches of this spine.  We note for completeness that we could address each summand separately in the manner of Section~\ref{proofofmaintheorem}:  as in Section~\ref{s: the spine}, let $\Sigma=T \cup A\cup F_1\cup F_2 \cup D_1 \cup D_2$, with the orientations on $A, T, F_1, F_2, \alpha_1$, and $\alpha_2$ chosen as before in Section~\ref{s: firstsmoothing}.  The only difference in the case that  $D_i$ has transition arcs of opposite sign is that, along with a local smoothing of Type C, we see a local smoothing as shown in Figures \ref{negA} and \ref{posA}, which we call \emph{Type A};  again, as before, the sutures of $B$ agree with those of $B^G$, as shown in Figure \ref{suturesstillwork}.

\begin{figure}[ht]
\labellist
\small
\pinlabel {\tiny $\mp$} at 60 540
\pinlabel {\tiny $\pm$} at 105 530
\pinlabel {\tiny $\pm$} at 105 492
\pinlabel {\tiny $\pm$} at 105 463
\pinlabel {\tiny $\pm$} at 149 453
\pinlabel {\tiny $\mp$} at 285 516
\pinlabel {\tiny $\pm$} at 315 530
\pinlabel {\tiny $\pm$} at 315 493
\pinlabel {\tiny $\pm$} at 314 469
\pinlabel {\tiny $\pm$} at 342 474
\pinlabel {\tiny $\mp$} at 280 387
\pinlabel {\tiny $\pm$} at 315 400
\pinlabel {\tiny $\pm$} at 315 359
\pinlabel {\tiny $\pm$} at 315 340
\pinlabel {\tiny $\pm$} at 342 341
\pinlabel {\footnotesize Triple point} at 428 415
\pinlabel {\footnotesize Branch} [Bl] at 165 332
\pinlabel {\footnotesize curves} [Bl] at 165 310
\pinlabel $T$ at 625 420
\pinlabel $F$ at 695 382
\pinlabel $D$ at 600 365
\pinlabel $T$ at 672 347
\pinlabel $F$ at 640 335
\pinlabel $T$ at 570 325
\pinlabel {\tiny $\mp$} at 53 278
\pinlabel {\tiny $\mp$} at 100 193
\pinlabel {\tiny $\mp$} at 100 265
\pinlabel {\tiny $\mp$} at 100 235
\pinlabel {\tiny $\pm$} at 142 185
\pinlabel {\tiny $\mp$} at 277 252
\pinlabel {\tiny $\mp$} at 307 197
\pinlabel {\tiny $\mp$} at 310 260
\pinlabel {\tiny $\mp$} at 310 235
\pinlabel {\tiny $\pm$} at 335 205
\pinlabel {\tiny $\mp$} at 277 119
\pinlabel {\tiny $\mp$} at 307 64
\pinlabel {\tiny $\mp$} at 310 129
\pinlabel {\tiny $\mp$} at 310 102
\pinlabel {\tiny $\pm$} at 338 70
\pinlabel {\footnotesize Branch} [Bl] at 168 50
\pinlabel {\footnotesize curves} [Bl] at 168 28
\endlabellist
\begin{center}
\includegraphics[scale=.4]{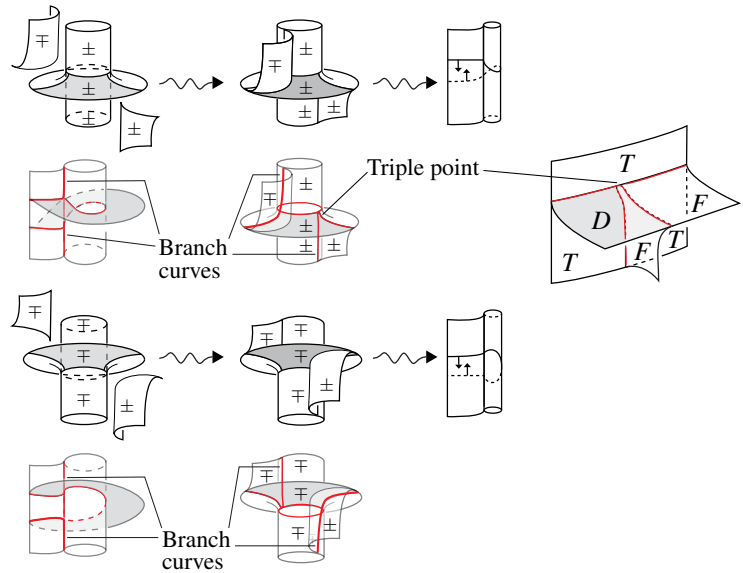}
\end{center}
\caption{$B$ in a neighbourhood of a negative Type A transition.}
\label{negA}
\end{figure}

\begin{figure}[ht]
\labellist
\small
\pinlabel {\tiny $\mp$} at 230 555
\pinlabel {\tiny $\pm$} at 185 545
\pinlabel {\tiny $\pm$} at 185 505
\pinlabel {\tiny $\pm$} at 185 475
\pinlabel {\tiny $\pm$} at 137 463
\pinlabel {\tiny $\mp$} at 422 535
\pinlabel {\tiny $\pm$} at 392 545
\pinlabel {\tiny $\pm$} at 395 505
\pinlabel {\tiny $\pm$} at 395 480
\pinlabel {\tiny $\pm$} at 360 485
\pinlabel {\tiny $\mp$} at 425 389
\pinlabel {\tiny $\pm$} at 392 400
\pinlabel {\tiny $\pm$} at 395 360
\pinlabel {\tiny $\pm$} at 395 335
\pinlabel {\tiny $\pm$} at 360 343
\pinlabel {\footnotesize Branch} [Bl] at 256 421
\pinlabel {\footnotesize curves} [Bl] at 256 399
\pinlabel {\tiny $\mp$} at 235 285
\pinlabel {\tiny $\mp$} at 190 272
\pinlabel {\tiny $\mp$} at 190 243
\pinlabel {\tiny $\mp$} at 190 205
\pinlabel {\tiny $\pm$} at 147 198
\pinlabel {\tiny $\mp$} at 425 260
\pinlabel {\tiny $\mp$} at 392 265
\pinlabel {\tiny $\mp$} at 395 240
\pinlabel {\tiny $\mp$} at 398 210
\pinlabel {\tiny $\pm$} at 370 220
\pinlabel {\tiny $\mp$} at 425 120
\pinlabel {\tiny $\mp$} at 392 122
\pinlabel {\tiny $\mp$} at 395 100
\pinlabel {\tiny $\mp$} at 400 65
\pinlabel {\tiny $\pm$} at 367 73
\pinlabel {\footnotesize Branch} [Bl] at 256 135
\pinlabel {\footnotesize curves} [Bl] at 256 114
\endlabellist
\begin{center}
\includegraphics[scale=.4]{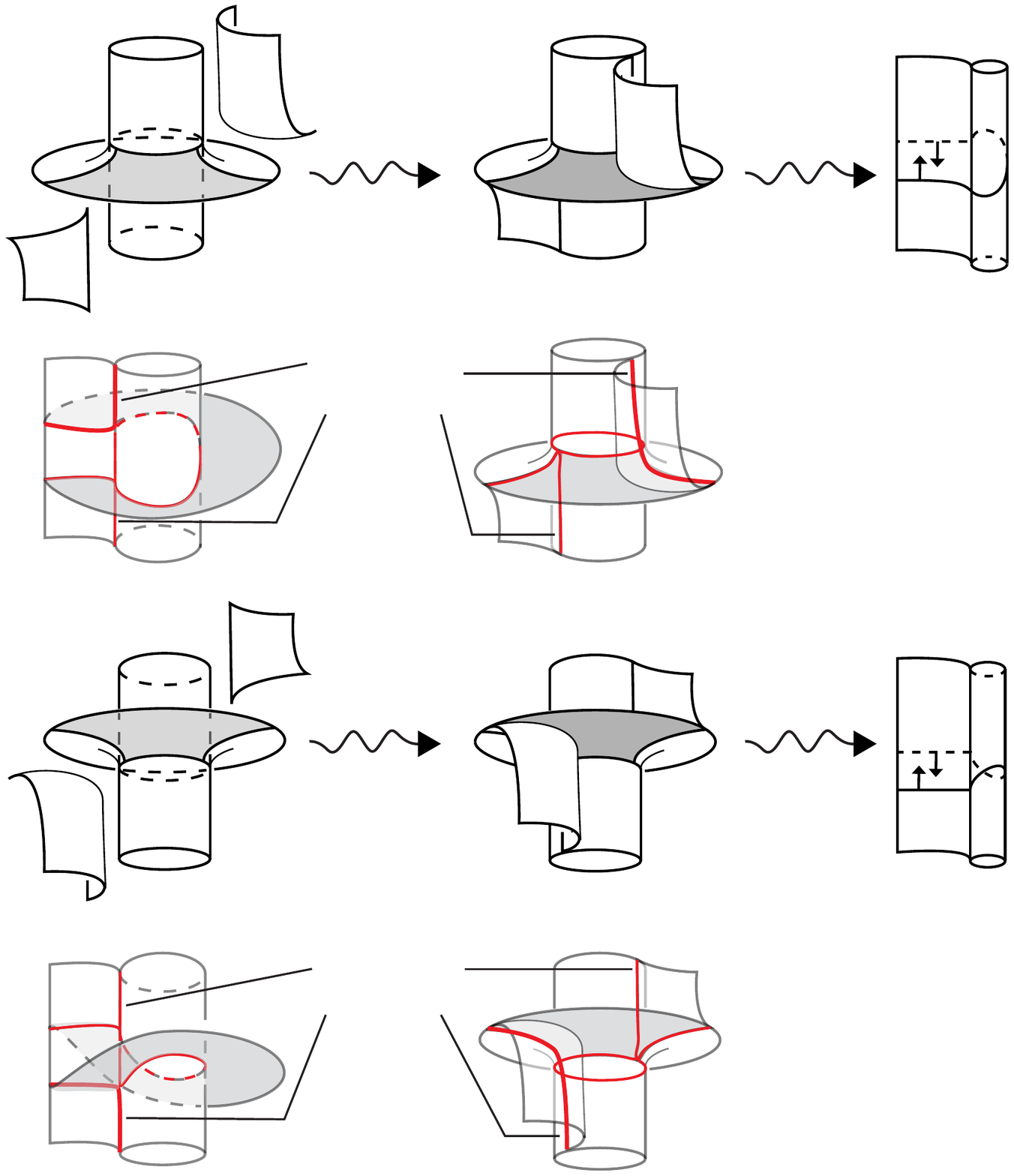}
\end{center}
\caption{$B$ in a neighbourhood of a positive Type A transition.}
\label{posA}
\end{figure}

\begin{figure}[t]
\labellist
\small
\pinlabel {Type C} [B] at 355 410
\pinlabel {Type A} [B] at 462 410
\pinlabel $\alpha$ [Br] at 325 260
\pinlabel $\phi(\alpha)$ [Br] at 387 260
\pinlabel $\phi(\alpha)$ [Br] at 452 260
\pinlabel $\alpha$ [Br] at 489 260
\endlabellist
\begin{center}
\includegraphics[scale= .4]{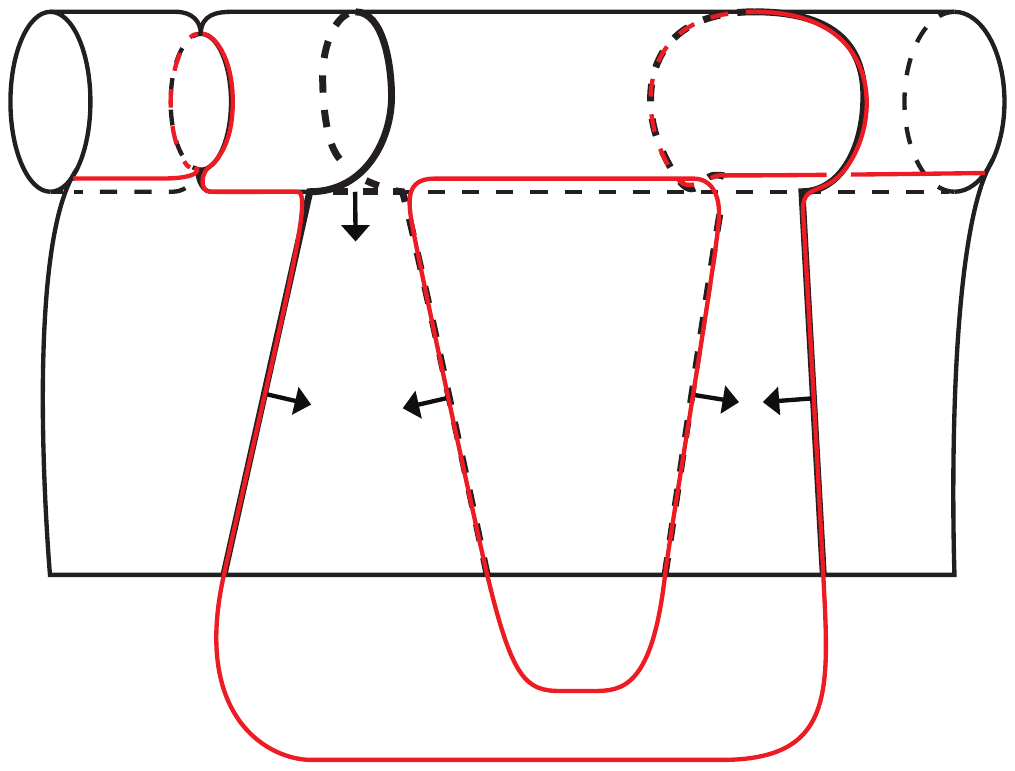}
\end{center}
\caption{The sutures of   $B$  agree with those of $B^G$.}\label{suturesstillwork}
\end{figure}

However,  there is a simpler and more general construction which does not depend on having a connected sum,  given in the following proposition:

\begin{thm} \label{richcase} 
Suppose that $X$ is a fibered 3-manifold, with  fiber $F$ a compact oriented surface with connected boundary, and orientation-preserving monodromy $\phi$.  If there is a tight arc $\alpha$ so that the corresponding product disk $D(\alpha)$ has transition arcs of opposite sign, then there   is a  co-oriented taut foliation $\mathcal F_{\gamma}$ that strongly realizes slope $\gamma$ for all slopes except   $\mu$, the distinguished meridian. The foliation $\mathcal F_{\gamma}$ has a unique  minimal set, and this minimal set is genuine and  disjoint from $\partial M$. Furthermore, each $\mathcal F_{\gamma}$ extends to a co-oriented taut  foliation $\widehat{\mathcal F}_{\gamma}$ in $\widehat{X}(\gamma)$, the closed 3-manifold obtained by Dehn filling along $\gamma$, and  when  $\gamma$ intersects the meridian efficiently in at least two points, the minimal set of $\widehat{\mathcal F}(\gamma)$  is genuine   as well.

\end{thm}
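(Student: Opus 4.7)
The plan is to mimic the proof of Theorem~\ref{main}, with the two arcs $\alpha_1,\alpha_2$ used in the connected-sum case replaced by the single tight arc $\alpha$, and the summing annulus $A$ dropped. First isotope a torus $T$ parallel to $\partial X$ into $\Int X$ and set $\Sigma = T \cup F \cup D(\alpha)$. Because one transition of $D(\alpha)$ is positive and the other negative, there is a compatible choice of co-orientations on the three sectors of $\Sigma$ that smooths to a Type~C configuration at one endpoint of $\alpha$ and to a Type~A configuration at the other, as illustrated in Figures~\ref{cuspintropos}--\ref{cuspintro} and~\ref{negA}--\ref{posA}. The resulting co-oriented branched surface $B$ has the property that $B \cap T$ contains two disjoint parallel meridians as a subtrack.

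With these choices in place, Lemma~\ref{BGabai} and the sutured-manifold analysis of Figure~\ref{suturesstillwork} identify the complementary regions of $B$ exactly as in Proposition~\ref{Gabaicomplements}: one boundary piece $(\partial X \times I,\, V_0 \cup V_1)$, with $V_0, V_1$ two disjoint meridional annular sutures, and one product piece $(F|_\alpha \times I,\, \partial F|_\alpha \times I)$. Applying the opposite-sign case of Theorem~\ref{Gcanbemadelaminar} to an end-effective sequence $\phi(\alpha)=\alpha_0,\alpha_1,\dots,\alpha_n=\alpha$ yields a splitting $B'$ of $B$ that is laminar, preserves the two meridional cusps in the boundary region, and replaces the single product piece by $n$ parallel copies. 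The argument of Proposition~\ref{prop:leavesnoncpt} then goes through unchanged to show that any leaf carried by $B$ contains a sector of $F$ adjacent to $T$ and is therefore noncompact, and Li's theorem gives a lamination $\mathcal{L}$ fully carried by $B'$.

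To finish, I would adapt Proposition~\ref{strongdetection}: because $Y_\partial$ has two parallel meridional cusps, the stacking-chair construction of Figure~\ref{figure: realizes all slopes} fills $Y_\partial$ by noncompact leaves of any prescribed slope $\gamma\neq\mu$, while the product complementary region is filled trivially, producing a co-oriented taut foliation $\mathcal{F}_\gamma$ strongly realizing $\gamma$. Uniqueness of the minimal set and its disjointness from $\partial M$ follow because every leaf carried by $B'$ meets every sector, and genuineness is witnessed by $Y_\partial$ itself. For rational $\gamma$, Dehn filling extends $\mathcal{F}_\gamma$ to $\widehat{\mathcal{F}}_\gamma$; the analogue of Corollary~\ref{surgerycomplements} shows that the filled boundary region is a solid torus whose meridional disk meets each cusped annulus in $|\langle\gamma,\mu\rangle|$ arcs, so when $|\langle\gamma,\mu\rangle|\geq 2$ it is not a product sutured manifold, guaranteeing that the minimal set of $\widehat{\mathcal{F}}_\gamma$ is genuine. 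The main technical point I expect to need care is verifying that the splitting guaranteed by the opposite-sign case of Theorem~\ref{Gcanbemadelaminar} produces a boundary train track that fully carries every nontrivial slope except $\mu$; this requires tracking the end-effective sequence monotonically in both $\delta'_0(\alpha)$ and $\delta'_1(\alpha)$ simultaneously.
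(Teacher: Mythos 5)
Your overall strategy -- build a spine from a boundary-parallel torus $T$, a copy of the fiber, and the product disk $D(\alpha)$, smooth it so the boundary complementary region acquires meridional cusps, split using Theorem~\ref{Gcanbemadelaminar}, and apply the stacking-chair argument -- is exactly what the paper does. But there is a concrete gap in the smoothing step that undermines the rest of the argument.

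You propose a smoothing that is Type~C at one transition arc and Type~A at the other, and assert that this makes the branching locus on $T$ contain two disjoint meridians (i.e., two meridional sutures bounding the region $Y_\partial$ adjacent to $\partial X$). This is the wrong smoothing. The distinction between Type~C and the Types A/B is precisely whether the meridional cusp points into the complementary region containing $\partial X$ (Type~C: ``introducing a meridian cusp in the region containing $\kappa$,'' Figures~\ref{cuspintropos},~\ref{cuspintro}) or not (Types A and B serve only to make the \emph{product} region's sutures agree with those of $B^G$; see the remark accompanying Figure~\ref{suturesstillwork}). The Type~C/Type~A combination is what appears when one ``treats each summand separately'' in a connected sum, where each summand contributes its own Type~C cusp; with a single arc $\alpha$ and no summing annulus, a C/A smoothing leaves $Y_\partial$ with only \emph{one} meridional suture. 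In that case $Y_\partial$ is $(\partial X \times I,\, V)$ with a single suture, and the stacking-chair construction of Proposition~\ref{strongdetection} cannot be run to realize every slope except $\mu$; you would only get an interval of slopes, as in Theorem~\ref{oldrachel2}.

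What the paper does instead -- and what the hypothesis of opposite-sign transitions is actually for -- is to choose co-orientations on the two components of $T|_{\mu_0\cup\mu_1}$ so that \emph{both} $\delta_0(\alpha)$ and $\delta_1(\alpha)$ are smoothed as Type~C (Figure~\ref{twotypeC}). If the transitions had the same sign, this local choice would force inconsistent co-orientations on the two annuli of $T|_{\mu_0\cup\mu_1}$ and hence would not extend to a compatible smoothing; the opposite-sign hypothesis is exactly what makes the two local Type~C choices globally consistent. With two Type~C cusps, $Y_\partial \cong (\partial X \times I,\, V_0 \cup V_1)$ with $V_0, V_1$ disjoint meridional annuli with cores $\mu_0,\mu_1$, which is what Proposition~\ref{strongdetection} needs. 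Once you make this correction, the remainder of your outline (identifying the product complementary region as $(F|_\alpha \times I, \partial F|_\alpha \times I)$, splitting via the end-effective sequence from the opposite-sign case of Theorem~\ref{Gcanbemadelaminar}, applying Li's theorem and the noncompactness argument of Proposition~\ref{prop:leavesnoncpt}, and checking genuineness of the filled-in minimal set via the analogue of Corollary~\ref{surgerycomplements}) matches the paper's argument. One minor further point: the paper deliberately replaces $F$ with $F_0$ (the fiber minus a collar of $\partial F$) so that $T$ arises cleanly as $\partial((F_0\times I)/\phi)$; this is cosmetic but saves the need to isotope the spine into $\Int X$ after the fact.
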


\begin{figure}[ht]
\labellist
\small
\pinlabel {Type C} [B] at 355 410
\pinlabel {Type C} [B] at 462 410
\pinlabel $\alpha$ [Br] at 325 260
\pinlabel $\phi(\alpha)$ [Br] at 387 260
\pinlabel $\phi(\alpha)$ [Br] at 452 260
\pinlabel $\alpha$ [Br] at 489 260
\endlabellist
\begin{center}
\includegraphics[scale=.4]{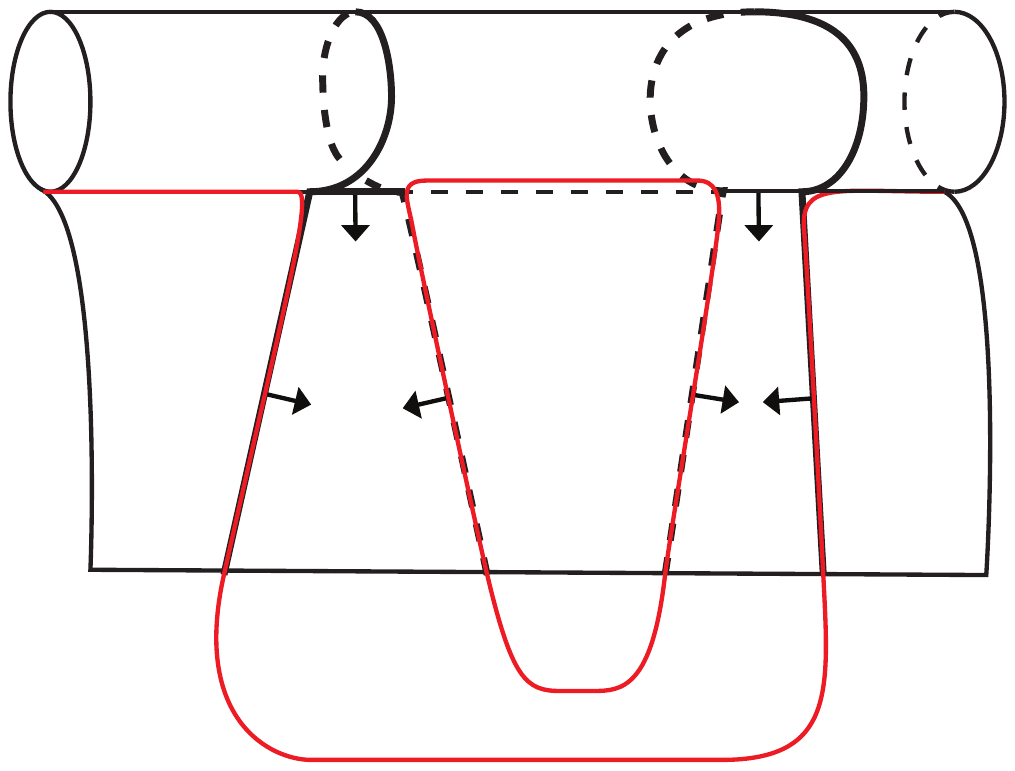}
\end{center}
\caption{Introducing two meridian cusps.}
\label{twotypeC}
\end{figure}

\begin{proof}
Set $D=D(\alpha)$, $\mu_0 = \mu_0(\alpha)$, and $\mu_1 = \mu_1(\alpha)$. Choose a small annular neighbourhood of $\partial F$ in $F$, and let $F_0$ denote the complement of this annulus in $F$. We may assume $\phi$ restricts to a homeomorphism of $F_0$; so $(F_0\times I)/\phi$ is a codimension zero submanifold of $X$. Let $T$ denote the torus boundary of this submanifold.

Now consider the spine  $T\cup F_0\cup D$. Fix an arbitrary co-orientation on $F_0$, and choose the co-orientation on $D$ that results in the smoothing in the interior of $F_0$ that is indicated in Figure~\ref{twotypeC}. 

Now choose co-orientations on the components of  $T|_{\mu_0\cup\mu_1}$ in a neighbourhood of the spine about each transition  so that a meridian cusp in introduced at each, as modelled in Figures \ref{cuspintropos} and \ref{cuspintro}.   Since the transition arcs are of opposite sign, there is a compatible choice of co-orientation on the components of   $T|_{\mu_0\cup\mu_1}$  that agrees with these local choices.  As illustrated in Figure~\ref{twotypeC}, the complementary region of the resulting branched surface that does not contain $\partial X_\kappa$ is isomorphic as a sutured manifold to $B^G(F_0,D)$.  We thus obtain a branched surface $B$ with one  complementary region  homeomorphic to a $(F_0'\times I,\partial F_0'\times I)$, where $F_0'=F_0|_{\alpha}$, and one   complementary region homomorphic to   $\left(\partial X_\kappa \times I, V_0 \cup V_1 \right)$, where $V_0$ and $V_1$ are disjoint meridional annuli with cores $\mu_0$ and $\mu_1$, respectively. It is therefore essential. Apply  the arguments of Section~\ref{proofofmaintheorem} to the splitting of $B$ guaranteed by   Theorem~\ref{Gcanbemadelaminar}  to see that $B$ splits  to a laminar branched surface. The desired conclusions now follow as in our previous constructions.
\end{proof}

\bibliographystyle{amsplain}

\end{document}